\documentclass[11pt,a4paper]{amsart}
\usepackage[utf8]{inputenc}
\usepackage[english]{babel}
\usepackage{amsmath}
\usepackage{amsfonts}
\usepackage{amssymb}
\usepackage{xcolor}
\usepackage{graphicx}
\usepackage{float}
\usepackage{mathtools}
\usepackage{fullpage}
\usepackage{hyperref}

\newcommand{\R}{\mathbb R}
\newcommand{\E}{\mathbb E}
\newcommand{\Z}{\mathbb Z}
\newcommand{\N}{\mathbb N}
\renewcommand{\L}{\mathcal L}
\newcommand{\eps}{\varepsilon}
\newcommand{\1}{\mathbf 1}
\newcommand{\U}{\mathcal U}

\newtheorem{thm}{Theorem}[section]
\newtheorem{lemma}[thm]{Lemma}
\newtheorem{defn}[thm]{Definition}
\newtheorem{prop}[thm]{Proposition}
\newtheorem{cor}[thm]{Corollary}

\theoremstyle{remark}
\newtheorem{rem}[thm]{Remark}

\newenvironment{proofERRW}{\paragraph{\textit{Proof of Proposition \ref{PropConvERRW}.}}}
{\hfill$\square$}

\numberwithin{equation}{section}

\author{Titus Lupu \and Christophe Sabot \and Pierre Tarrès}
\address {CNRS and LPSM, UMR 8001,
Sorbonne Université,
4 place Jussieu,
75252 Paris cedex 05,
France}
\email
{titus.lupu@upmc.fr}

\address {
Institut Camille Jordan,
Université Lyon 1, 
43 bd. du 11 nov. 1918,
69622 Villeurbanne cedex,
France}
\email
{sabot@math.univ-lyon1.fr}

\address {NYU-ECNU Institute of Mathematical Sciences at NYU Shanghai, China; Courant Institute of Mathematical Sciences, New York, USA; CNRS and Universit\'e Paris-Dauphine, PSL Research University, Ceremade, Paris, France}
\email
{tarres@nyu.edu}

\title{Fine mesh 
limit of the VRJP in dimension one and Bass-Burdzy flow}

\begin{document}

\begin{abstract}

We introduce a continuous space limit of the Vertex Reinforced Jump Process (VRJP)  in dimension one, which we call Linearly Reinforced Motion (LRM) on $\R$. It is constructed out of a convergent Bass-Burdzy flow. The proof goes through the representation of the VRJP as a mixture of Markov jump processes. As a by-product this gives a representation in terms of a mixture of diffusions of the LRM and of the Bass-Burdzy flow itself. We also show that our continuous space limit can be obtained out of the
Edge Reinforced Random Walk (ERRW), since the ERRW and the VRJP are known to be closely related.
Compared to the discrete space processes, the LRM has an additional symmetry in the initial local times (initial occupation profile): changing them amounts to a deterministic change of the space and time scales.

\end{abstract}

\subjclass[2010]{60J60, 60K35, 60K37(primary), and 60J55(secondary)}
\keywords{Self-interacting diffusion, reinforcement, diffusion in random environment, local time}

\maketitle

\section{Introduction and presentation of results}
\label{SecIntro}

Let $G=(V,E,\sim)$ be an electrical network with positive conductances $(C_e)_{e\in E}$, and let 
$(\phi_{i})_{i\in V}$
be positive weights on the vertices $V$. The Vertex-Reinforced Jump Process (VRJP) is a continuous-time process $(\zeta_t)_{t\ge 0}$ taking values in $V$ which, conditionally on the past at time $t$, jumps from a vertex $i\in V$ to $j\sim i$ at rate
\begin{equation*}
C_{ij}
\Big(
\phi_{j}+
\int_0^{t}\1_{\{\zeta_s=j\}}\,ds
\Big),
\end{equation*}
where 
\begin{equation*}
\phi_{j}+
\int_0^{t}\1_{\{\zeta_s=j\}}\,ds
\end{equation*}
is the local time at vertex $j$ at time $t$, with the convention that the initial local time at $j$ is $\phi_{j}$.

The VRJP was introduced by Davis and Volkov \cite{DavisVolkov2002VRJP,DavisVolkov2004VRJP} and is closely related to the Edge-Reinforced Random Walk (ERRW) introduced by Coppersmith and Diaconis in 1986 \cite{coppersmith}, and to the supersymmetric hyperbolic model in quantum field theory, see \cite{SabotTarres2015VRJPSSHSM,dsz}; see \cite{DavisDean2010VRJP1D,bs} for more references on  the VRJP.

Our aim is to introduce a fine mesh limit of the VRJP on the one-dimensional lattice $2^{-n}\Z$ when $n$ tends to infinity. 
We start with a function $L_{0}:\R\rightarrow (0,+\infty)$, which will correspond to initial local times of the fine mesh limit,  such that 
\begin{equation}
\label{EqCond}
\int_{0}^{+\infty}L_{0}(x)^{-2} dx
=\int_{-\infty}^{0}L_{0}(x)^{-2} dx
=+\infty.
\end{equation}
As we will see further, \eqref{EqCond} is a condition for non-explosion to infinity.

We define $(X^{(n)}_{t})_{t\geq 0}$ as the continuous-time VRJP started from $0$ on the network $2^{-n}\Z$, with 
uniform conductances
$C_e=C=2^{2n-1}$, and 
$\phi_{i2^{-n}}=2^{-n}L_0(i2^{-n})$.
We define its local time as
\begin{displaymath}
\ell^{(n)}_{t}(x)=
2^{n}\int_{0}^{t}\1_{X^{(n)}_{s}=x}ds,
~x\in 2^{-n}\Z.
\end{displaymath}
The factor $2^{n}$ is the inverse of the size of a cell around a vertex.
The jump rates at time $t$ from 
$x$ to $x+\sigma 2^{-n}$, $\sigma\in\lbrace -1,1\rbrace$, are
\begin{equation}
\label{EqJumpRate}
2^{2n-1}L^{(n)}_{t}(x+\sigma 2^{-n}),
\end{equation}
with
\begin{displaymath}
L^{(n)}_{t}=L^{(n)}_{0}+\ell^{(n)}_{t},
\end{displaymath}
where $L^{(n)}_{0}$ is the restriction to $2^{-n}\Z$ of the initial occupation profile $L_0$. The process is defined up to a time
$t^{(n)}_{\rm max}\in (0,+\infty]$, as it might reach
$-\infty$ or $+\infty$ in finite time.

We are interested in the limit in law of
$((X^{(n)}_{t})_{0\leq t\leq t^{(n)}_{\rm max}},
(L^{(n)}_{t}(x))_{x\in 2^{-n}\Z, 0\leq t\leq t^{(n)}_{\rm max}})$ as 
$n\to +\infty$.

The order in the conductances $C$ and initial local times
$\phi$ yields, up to a linear change of time, the only interesting limit, i.e. which is not Brownian motion or a constant process.

We will denote the limit process on $\R$ by
$((X_{t})_{t\geq 0},
(L_{t}(x))_{x\in \R, t\geq 0})$. One can construct it out of the flow of solutions to the Bass-Burdzy equation:
\begin{equation}
\label{EqBassBurdzy}
\dfrac{d Y_{u}}{du}=
\left\lbrace
\begin{array}{ll}
-1 & \text{if}~Y_{u}>B_{u},\\ 
1 & \text{if}~Y_{u}<B_{u},
\end{array} 
\right.
\end{equation}
where $(B_{u})_{u\geq 0}$ is the standard Brownian motion on $\R$ started from $0$. Bass and Burdzy showed in \cite{BassBurdzy99StochBiff} that
\eqref{EqBassBurdzy} has a.s., for a given initial condition, a unique solution which is Lipschitz continuous. Let us explain how this equation naturally appears in our context.

Assume first that there is no reinforcement, that is to say
$L^{(n)}_{t}$ is replaced by $L_{0}$ in the jump rates of
\eqref{EqJumpRate}. Then the processes would converge to a Markov diffusion with the infinitesimal generator
\begin{displaymath}
\dfrac{1}{2}L_{0}(x)\dfrac{d^{2}}{dx^{2}}+
L_{0}(x)\Big(\dfrac{d}{dx}\big(\log(L_{0}(x))\big)\Big)\dfrac{d}{dx}.
\end{displaymath}
So if one does a change of scale
\begin{displaymath}
dy=L_{0}(x)^{-2} dx
\end{displaymath}
(by the way, this is where the condition
\eqref{EqCond} comes from), and a change of time
\begin{displaymath}
du= L_{0}(X_{t})^{-3}dt,
\end{displaymath}
where $X_{t}$ is the position of the particle at time $t$, one gets a Brownian motion. 
See Section 4.1 in \cite{ItoMcKean1974Diffusions},
Sections 16.5 and 16.6 in\cite{Breiman1992Probability}, and
Sections VII.2 and VII.3 in \cite{RevuzYor1999BMGrundlehren}
for the notions of natural scale and natural speed measures of one-dimensional diffusions.

Now assume that we do have a reinforcement and that there is some limit process $(X_{t})_{t\geq 0}$, with occupation densities
$L_{t}-L_{0}$. Then one would like to have a dynamical change of scale
\begin{displaymath}
d S_{t}(x)= L_{t}(x)^{-2} dx,
\end{displaymath}
such that $(S_{t}(X_{t}))_{t\geq 0}$ is a martingale
(which corresponds to choosing
$S^{-1}_{t}(0)$ in an appropriate way), and such that after a change of time
\begin{equation}
\label{EqTS}
du=L_{t}(X_{t})^{-3} dt,
\end{equation}
this martingale becomes a Brownian motion
$B_{u}=S_{u}(X_{u})$. This corresponds to the idea that 
after time $t$, $X_{t+\Delta t}$, behaves, for
$\Delta t \ll 1$, almost like a diffusion with the infinitesimal generator
\begin{displaymath}
\dfrac{1}{2}L_{t}(x)\dfrac{d^{2}}{dx^{2}}+
L_{t}(x)\dfrac{d}{dx}(\log(L_{t}(x)))\dfrac{d}{dx}.
\end{displaymath}
Given $x_{1}<x_{2}\in\R$ fixed, 
in the time scale \eqref{EqTS}, we have that
\begin{eqnarray*}
\dfrac{d}{d u}(S_{u}(x_{2})-S_{u}(x_{1}))&=&
\dfrac{dt}{du}\dfrac{d}{d t}(S_{t}(x_{2})-S_{t}(x_{1}))=
L_{t}(X_{t})^{3}\dfrac{d}{dt}
\int_{x_{1}}^{x_{2}}L_{t}(x)^{-2}dx
\\&=&-2L_{t}(X_{t})^{3}
\int_{x_{1}}^{x_{2}}L_{t}(x)^{-3}d_{t}L_{t}(x)dx
= -2\int_{x_{1}}^{x_{2}}d_{t}L_{t}(x)dx
\\&=&-2\1_{x_{1}<X_{t}<x_{2}}=
-2\1_{S_{u}(x_{1})<B_{u}<S_{u}(x_{2})}.
\end{eqnarray*}
If we moreover take into account that after time $t$, 
$X_{t+ \Delta t}$ should spend infinitesimally the same amount of 
time left and right from $X_{t}$, we get the equation
\begin{displaymath}
\dfrac{d}{d u}(S_{u}(x))=
-\1_{S_{u}(x)>B_{u}}+
\1_{S_{u}(x)<B_{u}},
\end{displaymath}
which is exactly that of \eqref{EqBassBurdzy}.

We will "reverse-engineer" the above construction.
Let $(\Psi^{B}_{u}(y))_{y\in\R, u\geq 0}$ be the flow of solutions to
\eqref{EqBassBurdzy}. $u\mapsto\Psi^{B}_{u}(y)$ is the Lipschitz solution to
\eqref{EqBassBurdzy} with initial condition $Y_{0}=y$. We call $\Psi^{B}$ the \textbf{convergent Bass-Burdzy} flow. It is a flow of diffeomorphisms of $\R$
\cite{BassBurdzy99StochBiff}. Let $\xi_{u}$ be
\begin{displaymath}
\xi_{u}= (\Psi^{B}_{u})^{-1}(B_{u}).
\end{displaymath}
The process $(\xi_{u})_{u\geq 0}$ has a time-space continuous family of local times $(\Lambda_{u}(y))_{y\in\R, u\geq 0}$
\cite{HuWarren00BBFlow}, such that for all 
$f:\R\rightarrow \R$ bounded, Borel measurable, and all $u\geq 0$,
\begin{displaymath}
\int_{0}^{u}f(\xi_{v}) dv=
\int_{\R}f(y)\Lambda_{u}(y) dy.
\end{displaymath}
Moreover, $\Lambda_{u}(y)\leq 1/2$.

\begin{defn}
\label{DefLSRM}
Let $L_{0}:\R\rightarrow (0,+\infty)$ be a continuous function. Moreover, we assume that the condition \eqref{EqCond} is satisfied. 
Let $x_{0}\in\mathbb{R}$. Denote, for $x\in\R$,
\begin{equation}
\label{EqS0}
S_{0}(x)=\int_{x_{0}}^{x}L_{0}(r)^{-2} dr.
\end{equation}
Perform the change of time
\begin{equation}
\label{EqTimeChange}
dt=L_{0}(S_{0}^{-1}(\xi_{u}))^{3}(1-2\Lambda_{u}(\xi_{u}))^{-\frac{3}{2}} du.
\end{equation}
The process
$(S_{0}^{-1}(\xi_{u(t)}))_{t\geq 0}$, where $u(t)$ is the inverse time change of \eqref{EqTimeChange}, is called the 
\textbf{Linearly Reinforced Motion (LRM)} starting from $x_{0}$, with \textbf{initial occupation profile} $L_{0}$.
We call $(\xi_{u})_{u\geq 0}$ the corresponding \textbf{reduced process}
and $(B_{u})_{u\geq 0}$ the corresponding 
\textbf{driving Brownian motion}. Set
\begin{equation}
\label{EqLt}
L_{t}(x)=L_{0}(x)(1-2\Lambda_{u(t)}(S_{0}(x)))^{-\frac{1}{2}}.
\end{equation}
$(L_{t}(x))_{x\in\R}$ is the \textbf{occupation profile at time} $t$.
\end{defn}

\begin{rem}
\label{RemTimeChange}
The time change \eqref{EqTimeChange} is \textit{a posteriori}
\begin{displaymath}
dt = L_{t}(X_{t})^{3} du.
\end{displaymath}
\end{rem}

\begin{thm}
\label{ThmMain}
The VRJP process jointly with its occupation profiles 
\\A$(X^{(n)}_{t},L^{(n)}_{t}(x))
_{x\in 2^{-n}\Z, 0\leq t\leq t^{(n)}_{\rm max}}$
converge in law as $n\to +\infty$ to a Linearly Reinforced Motion started from $0$ and its occupation profiles 
$(X_{t},L_{t}(x))_{x\in \R, t\geq 0}$.
The topology of the convergence is that of uniform convergence on compact subsets.
In particular $t^{(n)}_{\rm max}$ converges 
in probability to $+\infty$.
The spatial processes are considered to be interpolated linearly outside $2^{-n}\Z$.
\end{thm}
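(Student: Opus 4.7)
The plan is to exploit the representation of the VRJP as a mixture of Markov jump processes in a random environment, and to show that at the level of this environment the fine mesh limit is precisely the convergent Bass--Burdzy flow. This is in line with the strategy announced in the abstract.

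First I would invoke the Sabot--Tarrès mixing-measure representation: on each finite truncation of $2^{-n}\Z$, after a suitable (deterministic functional) time change of the VRJP, the resulting process is, conditionally on a random field $U^{(n)}:2^{-n}\Z\to\R$, a nearest-neighbor Markov jump process with jump rates proportional to $C\, e^{U^{(n)}(y)-U^{(n)}(x)}$. The field $U^{(n)}$ can be constructed dynamically as new vertices are visited, by an explicit stochastic recursion driven by i.i.d.\ random variables. I would rewrite that recursion, after the spatial change of scale $S_0$ from \eqref{EqS0}, so that $U^{(n)}$ is realised as a functional of a random walk $B^{(n)}$ via a discrete analogue of the Bass--Burdzy equation \eqref{EqBassBurdzy}.

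The next step is to pass to the limit in the environment. Using an invariance principle for $B^{(n)}$ together with the pathwise uniqueness and the Lipschitz / diffeomorphism properties of the Bass--Burdzy flow from \cite{BassBurdzy99StochBiff}, I would prove that, jointly with $B^{(n)}$, the field $U^{(n)}$ (extended by linear interpolation and reparametrised by $S_0^{-1}$) converges in law to $-\tfrac12\log(1-2\Lambda_u)$ built from $\Psi^B$ and its occupation densities $\Lambda$ as in \cite{HuWarren00BBFlow}. Given this quenched convergence of the environment, the diffusion approximation for one-dimensional Markov jump processes in natural scale / speed measure form (cf.\ \cite{ItoMcKean1974Diffusions,Breiman1992Probability,RevuzYor1999BMGrundlehren}) identifies the limit of the time-changed $X^{(n)}$ with $S_0^{-1}(\xi_u)$, and of the discrete occupation densities with the expression \eqref{EqLt}. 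Inverting the random time change \eqref{EqTimeChange}, or equivalently using Remark \ref{RemTimeChange}, transports these limits to the original time scale and yields the LRM of Definition \ref{DefLSRM}.

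The main obstacle, I expect, is the joint uniform-on-compacts convergence of position and occupation profile together with the non-explosion statement $t^{(n)}_{\mathrm{max}} \to +\infty$ in probability. Condition \eqref{EqCond} guarantees that $S_0:\R\to\R$ is a bijection, so the reduced process $\xi$ stays in compacts on finite $u$-intervals; but at the discrete level one must turn this into uniform control on the probability that $X^{(n)}$ reaches $|x|=R$ in real time $t\le T$. I would handle this through: tightness of $S_0(X^{(n)})$ in the $u$-scale, exploiting the a.s.\ bound $\Lambda_u\le 1/2$ which keeps the drift of the reduced process bounded; uniform equicontinuity of the discrete occupation densities, which are explicit functionals of $U^{(n)}$ in the mixture representation; and transport of these bounds through the random time change \eqref{EqTimeChange} using that $L^{(n)}_t\ge L^{(n)}_0$ is bounded below on compacts. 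Identification of the limit with Definition \ref{DefLSRM} then reduces to matching the quenched occupation-density formula with \eqref{EqLt}, which is immediate from the definition of $\Lambda$.
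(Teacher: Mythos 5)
The core conceptual step in your proposal is incorrect, and the error is worth spelling out because it conflates two objects that the paper keeps carefully separate. You propose to realise the Sabot--Tarrès random field $\U^{(n)}$ as a functional of a random walk via a discrete Bass--Burdzy recursion, and then claim that after rescaling $\U^{(n)}$ converges to $-\tfrac12\log(1-2\Lambda_u)$. That cannot be right: $\U^{(n)}$ is a purely spatial process (it depends on $x$ only, with no time variable), whereas $-\tfrac12\log(1-2\Lambda_u(y))=\mathcal{L}_u(y)$ is the local time of the Bass--Burdzy flow at a \emph{finite} time $u$. In the paper, $\U^{(n)}$ is a sum of independent random variables with the explicit densities of Proposition \ref{PropMelange} (no dynamic recursion, no Bass--Burdzy), and Lemma \ref{LemConvEnv} shows by a martingale functional CLT that it converges to $\U(x)=\sqrt{2}\,W\circ S_0(x)+|S_0(x)|$ of \eqref{EqU}, a drifted Brownian motion in space. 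The only sense in which $\U$ and $\mathcal{L}_u$ are related is through a $u\to\infty$ ratio limit (see Proposition \ref{PropNormOccupMeas}), which is far from the finite-$u$ identity your argument needs.

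The Bass--Burdzy flow does enter the paper's proof, but at a different level. After establishing the convergence of $(X^{(n)},L^{(n)})$ to a time-changed diffusion $(X^\ast,L^\ast)$ in the random potential $2\U-2\log L_0$ (Proposition \ref{ThmConv1}), the paper introduces the \emph{dynamic} scale functions $x\mapsto S^{(n)}_t(x)$ whose increments are $2^{-n}L^{(n)}_t(x)^{-1}L^{(n)}_t(x-2^{-n})^{-1}$, i.e.\ built from the evolving occupation profile, not from the static environment. After the time change $u^{(n)}(t)$, the process $M^{(n)}_u=S^{(n)}_{t^{(n)}(u)}(X^{(n)}_{t^{(n)}(u)})$ is shown to be a martingale with $\E[(M^{(n)}_{u_1}-M^{(n)}_{u_0})^2\mid\mathcal F^{M^{(n)}}_{u_0}]=u_1-u_0$ (Lemma \ref{LemMart}), and then converges to a Brownian motion $(B_u)$ (Proposition \ref{PropConvBM}). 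The flow $\Psi^{(n)}_u(y)=S^{(n)}_{t^{(n)}(u)}\circ S_0^{(n)-1}(y)$ is then a discrete analogue of \eqref{EqBassBurdzy}: its space derivative at points not adjacent to the current position equals $\mp 1$ up to $O(2^{-n})$ errors. Passing to the limit and invoking the Bass--Burdzy uniqueness theorem identifies $\Psi^\ast$ as the Bass--Burdzy flow driven by $B$, hence $X^\ast_t=S_0^{-1}(\xi_{u^\ast(t)})$ is the LRM. Your outline misses this intermediate object $M^{(n)}$ entirely; without it there is no source for the driving Brownian motion $B$ and no way to see the Bass--Burdzy ODE appear. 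If you want to salvage your route, the first thing to repair is the identification of what $\U^{(n)}$ actually limits to, and you would still need a separate mechanism --- something playing the role of $S^{(n)}_t$ and $M^{(n)}_u$ --- to bridge between the static-environment picture and the Bass--Burdzy flow.
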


\begin{rem} Previously, a different Bass-Burdzy flow appeared in the study of continuous self-interacting processes. In \cite{Warren05BBFlow} it was shown that the flow of solutions to 
\begin{displaymath}
\dfrac{dY_{u}}{du} = \1_{Y_{u}>B_{u}}
\end{displaymath}
was related to the Brownian first passage bridge conditioned by its family of local times and to the Brownian burglar \cite{WarrenYor1998Burglar}.
\end{rem}

The LRM has a symmetry property under the change of the initial occupation profile. It is a straightforward consequence of Definition
\ref{DefLSRM}. One uses the same driving Brownian motion and reduced process. This symmetry also implies a scaling property, when additionally to the space and time, one also scales the initial occupation profile. We state this next. 

\begin{prop}
\label{PropSym}
(1) Let $(\chi_{\tau})_{\tau\geq 0}$ be the Linearly Reinforced Motion starting from $0$, with initial occupation profile $1$. Given
$x_{0}\in\R$ and another occupation profile $L_{0}$, define the change of time
\begin{displaymath}
dt = L_{0}(S_{0}^{-1}(\chi_{\tau}))^{3} d\tau,
\end{displaymath}
and consider the change of scale $S_{0}$ given by
\eqref{EqS0}. Then
$X_{t}=S_{0}^{-1}(\chi_{\tau(t)})$ is 
a Linearly Reinforced Motion starting from $x_{0}$, with
initial occupation profile $L_{0}$.

(2) Consequently, if $(X_{t})_{t\geq 0}$ is an LRM starting from
$0$ with initial occupation profile $L_{0}$, and $c>0$ is a constant, then $(c^{2}X_{c^{-3}t})_{t\geq 0}$ is an LRM with initial occupation profile $c L_{0}$.
\end{prop}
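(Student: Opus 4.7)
The plan is to verify both parts directly from Definition \ref{DefLSRM}, exploiting that the driving Brownian motion $B$, the reduced process $(\xi_u)$ and its local times $(\Lambda_u)$ depend only on the Bass-Burdzy flow and not on the initial profile $L_0$ or the starting point $x_0$; these latter data enter only through the space change $S_0$ in \eqref{EqS0} and the time change \eqref{EqTimeChange}.

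For Part (1), I would let $B$ be the Brownian motion driving $\chi$, with associated reduced process $\xi$ and local times $\Lambda$. Since $\chi$ has profile $\equiv 1$ and starts from $0$, its space change is the identity and Definition \ref{DefLSRM} reduces to $\chi_\tau = \xi_{u_\chi(\tau)}$ with $d\tau = (1-2\Lambda_u(\xi_u))^{-3/2}\,du$. I then apply Definition \ref{DefLSRM} a second time using the \emph{same} $B$ (hence the same $\xi$ and $\Lambda$) but now with parameters $(L_0, x_0)$. The resulting LRM is $S_0^{-1}(\xi_{u(t)})$, with internal time change $dt = L_0(S_0^{-1}(\xi_u))^3(1-2\Lambda_u(\xi_u))^{-3/2}\,du$. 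Substituting $(1-2\Lambda_u(\xi_u))^{-3/2}\,du = d\tau$ and $\xi_u = \chi_\tau$ converts this into $dt = L_0(S_0^{-1}(\chi_\tau))^3\,d\tau$, which is exactly the one in the statement; correspondingly the LRM equals $S_0^{-1}(\chi_{\tau(t)})$, proving (1).

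For Part (2), I would apply (1) twice, with parameters $(L_0, 0)$ and $(cL_0, 0)$, using the same canonical $\chi$. Replacing $L_0$ by $cL_0$ in \eqref{EqS0} rescales the space change by a power of $c$, and multiplies the cubed-profile integrand in \eqref{EqTimeChange} by $c^3$. The substitution $s = c^3 t$ in the representation of $X$ provided by (1) then rewrites $(c^2 X_{c^{-3}s})_{s \geq 0}$ precisely as the construction of an LRM with profile $cL_0$ from $0$, yielding the claim.

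The one genuinely error-prone step is the careful bookkeeping of the nested time changes in the proof of (1), where one must keep track of which infinitesimal variable among $u$, $\tau$ and $t$ is being integrated at each stage, and apply the chain rule correctly. Once this is in place, Part (2) reduces to a routine scaling computation based on \eqref{EqS0} and \eqref{EqTimeChange}, with no new probabilistic ingredient beyond the definition of the LRM.
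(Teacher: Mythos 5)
Your Part (1) is exactly the paper's intended argument (the paper remarks only that one should use the same driving Brownian motion and reduced process, which is precisely what you do), and it is correct: both the space change and the time change compose cleanly with those of the LRM with unit profile.

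Part (2), however, contains a gap as written. You assert that replacing $L_0$ by $cL_0$ in \eqref{EqS0} and \eqref{EqTimeChange} ``multiplies the cubed-profile integrand by $c^3$''. That is false if $cL_0$ is read literally as the function $x\mapsto cL_0(x)$: the new scale function is $\tilde S_0 = c^{-2}S_0$, hence $\tilde S_0^{-1}(y) = S_0^{-1}(c^2 y)$, and the new integrand in \eqref{EqTimeChange} is
$(c L_0)\bigl(\tilde S_0^{-1}(\xi_u)\bigr)^3 (1-2\Lambda_u(\xi_u))^{-3/2} = c^3\,L_0\bigl(S_0^{-1}(c^2\xi_u)\bigr)^3(1-2\Lambda_u(\xi_u))^{-3/2}$,
so the argument of $L_0$ gets dilated by $c^2$ and you do not get a pure $c^3$ factor. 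Consequently the process built from the pair $(cL_0,0)$ is $S_0^{-1}(c^2\chi_{\tilde\tau(s)})$, which is not equal to $c^2S_0^{-1}(\chi_{\tau(c^{-3}s)})=c^2X_{c^{-3}s}$ unless $S_0$ is linear, i.e.\ $L_0$ is constant. The occupation-density calculus
$\int_0^s f(c^2 X_{c^{-3}r})\,dr = c\int_{\R} f(\tilde x)\,\bigl(L_{c^{-3}s}(c^{-2}\tilde x)-L_0(c^{-2}\tilde x)\bigr)\,d\tilde x$
shows that the initial profile of $(c^2X_{c^{-3}t})_t$ is necessarily $\tilde x\mapsto cL_0(c^{-2}\tilde x)$: the factor $c$ on the profile must be accompanied by the factor $c^{-2}$ in its spatial argument. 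With this profile one gets $\tilde S_0(\tilde x)=S_0(c^{-2}\tilde x)$, hence $\tilde S_0^{-1}(y)=c^2 S_0^{-1}(y)$, the integrand in \eqref{EqTimeChange} is then multiplied by exactly $c^3$, and your substitution $s=c^3t$ closes the argument. The paper's ``$cL_0$'' in Proposition~\ref{PropSym}(2) should be read with this implicit spatial rescaling; your computation is only valid under that reading, and the step where the rescaled argument of $L_0$ cancels back out must be made explicit.
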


It was shown in \cite{SabotTarres2015VRJPSSHSM} that on any electrical network, the VRJP has the same law as a time-change of a mixture of Markov (non-reinforced) jump processes. In our setting, the random environment related to the VRJP converges. This gives us in the limit a description of the LRM as a time-changed diffusion in random environment. 

Let $S_{0}$ be the change of scale defined by \eqref{EqS0}, with
$x_{0}=0$.
Let $(W(y))_{y\geq 0}$ and $(W(-y))_{y\geq 0}$ be two independent
standard Brownian motions, started from $0$, where $y$ is seen as a space variable. We see $(W(y))_{y\in\R}$ as a Brownian motion parametrized by
$\R$. Define
\begin{equation}
\label{EqU}
\U(x)= \sqrt{2}W\circ S_{0}(x) + \vert S_{0}(x)\vert.
\end{equation}
Consider $(Z_{q})_{q\geq 0}$ the diffusion in random potential
$2\U - 2\log(L_{0})$. Conditional on $(\U(x))_{x\in\R}$, it is a Markov diffusion on $\R$, started from 
$Z_{0}=0$, with the infinitesimal generator
\begin{equation}
\label{EqGenMix}
\dfrac{1}{2}\dfrac{d^{2}}{dx^{2}}
+
\Big(\dfrac{d}{dx}\big(\log(L_{0}(x))-\U(x)\big)\Big)
\dfrac{d}{dx}.
\end{equation}
We will denote by $(\lambda_{q}(x))_{x\in\R, y\geq 0}$ the family of local times of
$(Z_{q})_{q\geq 0}$.

Although the function $x\mapsto\log(L_{0}(x))-\U(x)$ is in general not differentiable, the diffusion $(Z_{q})_{q\geq 0}$ is well defined.
For that, consider the natural scale function 
\begin{equation}
\label{EqNatScal}
\mathcal{S}(x)=\int_{0}^{x}L_{0}(r)^{-2}e^{2\U(r)} dr.
\end{equation}
The condition \eqref{EqCond} and the fact that $\U$ is a.s. bounded from below imply that 
\begin{displaymath}
\text{a.s.}~~\mathcal{S}(-\infty)= -\infty,
\qquad
\mathcal{S}(+\infty)= +\infty.
\end{displaymath}
$(\mathcal{S}(Z_{q}))_{q\geq 0}$ is a local martingale and a Markov diffusion with
infinitesimal generator
\begin{displaymath}
\dfrac{1}{2}(\mathcal{S}'\circ\mathcal{S}^{-1}(\varsigma))^{2}
\dfrac{d^{2}}{d\varsigma^{2}}.
\end{displaymath}
It is a time-changed Brownian motion, and in particular, it is defined up to $q=+\infty$.
In the particular case $L_{0}\equiv 1$, the generator 
\eqref{EqGenMix} is equal to
\begin{displaymath}
\dfrac{1}{2}\dfrac{d^{2}}{dy^{2}}
-
\sqrt{2}\Big(\dfrac{d}{dy} W(y)\Big)
\dfrac{d}{dy}
-\operatorname{sgn}(y)\dfrac{d}{dy},
\end{displaymath}
$\big(\frac{d}{dy} W(y)\big)_{y\in\R}$ being the white noise.
For some background on diffusions in random Wiener potential, we refer to
\cite{Schumacher8595RandomDiffus, Brox86Wiener, Tanaka95RandomDiffus} and the references therein.

\begin{thm}
\label{ThmMixture}
The Linearly Reinforced Motion $(X_{t})_{t\geq 0}$,
started from $0$, with initial occupation profile $L_{0}$, has the same law as a time-change of the mixture of diffusions
$(Z_{q})_{q\geq 0}$, where the time-change is given by
\begin{equation}
\label{EqTC1}
dt=(L_{0}(Z_{q})^{2}+2\lambda_{q}(Z_{q}))^{-\frac{1}{2}} dq.
\end{equation}
\end{thm}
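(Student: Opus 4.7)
The plan is to derive the mixture representation as the $n\to\infty$ limit of the known mixture representation of the discrete VRJP on $2^{-n}\Z$, using Theorem \ref{ThmMain} to identify the process-side limit with the LRM. First I would invoke \cite{SabotTarres2015VRJPSSHSM} on a finite subgraph of $2^{-n}\Z$ (say $\{-N,\ldots,N\}2^{-n}$ with appropriate boundary conditions) to represent $X^{(n)}$ as a time-changed mixture of Markov jump processes: conditionally on an explicit random environment $(U^{(n)}_{i2^{-n}})_i$, an auxiliary process $\tilde X^{(n)}$ is a Markov jump process with nearest-neighbor rates of the form $2^{2n-1}e^{U^{(n)}_{(i\pm 1)2^{-n}}-U^{(n)}_{i2^{-n}}}$ (symmetrized to incorporate $\phi_i=2^{-n}L_0(i2^{-n})$), and $X^{(n)}$ is recovered from $\tilde X^{(n)}$ via an explicit time change depending on $\phi$, $U^{(n)}$ and the local times $\tilde\ell^{(n)}$ of $\tilde X^{(n)}$.

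The second step is the scaling limit of the environment: after pinning $U^{(n)}_0=0$ and interpolating linearly, I claim that $U^{(n)}$ converges in law, uniformly on compact subsets of $\R$, to $\U(x)=\sqrt{2}\,W\circ S_0(x)+|S_0(x)|$. In the natural-scale coordinate $y=S_0(x)$, the increments of the Sabot--Tarr\`es environment become asymptotically Gaussian with variance of order $dy$ (producing $\sqrt{2}W(y)$) while carrying a drift away from $0$ (producing $|y|$); both effects should be readable off the explicit density of $U^{(n)}$ together with the scaling $C=2^{2n-1}$, $\phi=2^{-n}L_0$. Jointly with $U^{(n)}\Rightarrow\U$, one then expects $\tilde X^{(n)}\Rightarrow Z_q$, where $Z_q$ has generator \eqref{EqGenMix}: a Taylor expansion on $C^{2}$ test functions of compact support shows that the discrete generators converge to $\tfrac12 f''+(\log L_0-\U)'f'$, and the occupation-time correspondence yields $\tilde\ell^{(n)}\Rightarrow\lambda$. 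Non-explosion of $Z_q$ follows from \eqref{EqCond} and the a.s.\ lower bound on $\U$, as already observed in the paragraph preceding the statement.

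Finally I would pass to the limit in the time change: the scalings of $\phi$ and $\tilde\ell^{(n)}$ imply that the discrete time change at state $x$, viewed as a function of the local time spent at $x$, converges to $(L_0(Z_q)^2+2\lambda_q(Z_q))^{-1/2}$, giving \eqref{EqTC1}; the exponent $-1/2$ and the square $L_0^{2}$ both originate from the combined $C=2^{2n-1}$, $\phi=2^{-n}L_0$ scaling. Theorem \ref{ThmMain} then identifies the limit of $X^{(n)}$ with the LRM, and the claim follows once the finite-window cutoff is removed by tightness and the fact that both the LRM and $Z_q$ are defined on all of $\R$.

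The hard part is this joint convergence: the environment $(U^{(n)})$ and the mixing jump process $(\tilde X^{(n)})$ are strongly coupled, and tightness on the whole real line requires controlling both the tails of $U^{(n)}$ and long excursions of $\tilde X^{(n)}$ in regions where $L_0$ is small. The positive drift $|S_0|$ in $\U$, together with \eqref{EqCond}, forces recurrence of $Z_q$ and parallels the non-explosion of the LRM from Theorem \ref{ThmMain}; reconciling these two non-explosion mechanisms through the limiting time change is the most delicate point.
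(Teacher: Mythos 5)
Your first half reproduces the route of Section~\ref{SecEnvir}: represent the VRJP on $2^{-n}\Z$ as a time-changed mixture of Markov jump processes via \cite{SabotTarres2015VRJPSSHSM} (the paper's Proposition~\ref{PropMelange}), prove convergence in law of the environment $\U^{(n)}\Rightarrow \U$ (Lemma~\ref{LemConvEnv}), deduce convergence of the mixing jump process to $Z_q$ (Proposition~\ref{PropConvZq}), and pass to the limit in the time change to obtain the limit $(X^\ast_t, L^\ast_t)$ as a time-change of $(Z_q)$ (Proposition~\ref{ThmConv1}). That part is sound.

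The gap is in your last step: you invoke Theorem~\ref{ThmMain} to identify the limit of $X^{(n)}$ with the LRM, but in this paper Theorems~\ref{ThmMain} and \ref{ThmMixture} do \emph{not} have independent proofs --- they are both consequences of Proposition~\ref{PropMainBis}, which shows directly that the limit $(X^\ast_t, L^\ast_t)$ obtained from the mixture representation coincides in law with the LRM. Citing Theorem~\ref{ThmMain} here is therefore circular: the only proof of that theorem the paper supplies is precisely the one that also proves what you are trying to show. The genuine work your proposal omits is the content of Section~\ref{SecConv}: one builds a discrete dynamical scale function $S^{(n)}_t$, shows that after the intrinsic time change the martingale $M^{(n)}_u=S^{(n)}_{t^{(n)}(u)}(X^{(n)}_{t^{(n)}(u)})$ converges to a Brownian motion (Proposition~\ref{PropConvBM}), identifies the limit $\Psi^\ast_u = S^\ast_{t^\ast(u)}\circ (S^\ast_0)^{-1}$ of the discrete flow $\Psi^{(n)}$ as the solution of the Bass--Burdzy equation \eqref{EqBassBurdzy} (using the Lipschitz bound and uniqueness from \cite{BassBurdzy99StochBiff}), and finally reads off $X^\ast_t= S_0^{-1}(\xi_{u^\ast(t)})$, matching Definition~\ref{DefLSRM}. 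None of this is implied by the $Z_q$-convergence alone.

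A secondary point: you flag ``joint convergence of the environment and the mixing jump process with tightness on all of $\R$'' as the hardest step. The paper dispatches that issue quite economically by embedding the jump processes in a single Brownian motion via a scale-and-time change (the $\beta,\ell^\beta,\mathsf{s}^{(n)}$ construction in the proof of Proposition~\ref{PropConvZq}), together with the fact that, in natural scale, the $\U$-potential produces $\mathcal S(\pm\infty)=\pm\infty$ a.s.\ from \eqref{EqCond}. The delicate part is not there; it is the identification of the subsequential limit $\Psi^\ast$ with the Bass--Burdzy flow, which your proposal never addresses.
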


\begin{rem}
The mixture of diffusions $(Z_{q})_{q\geq 0}$ is itself a reinforced process. Informally, one can imagine it as having a time-dependent infinitesimal generator
\begin{displaymath}
\dfrac{1}{2}\dfrac{d^{2}}{dx^{2}}+\dfrac{1}{2}
\dfrac{d}{dx}(\log(L_{0}(x)^{2}+2\lambda_{q}(x)))\dfrac{d}{dx}.
\end{displaymath}
\end{rem}

We will prove Theorem \ref{ThmMixture} by constructing out of the VRJP a discrete analogue of the convergent Bass-Burdzy flow.

Theorem \ref{ThmMixture} has an immediate implication on the reduced process
$(\xi_{u})_{u\geq 0}$.

\begin{cor}
\label{CorXiMelange}

Let $\xi_{u}=(\Psi_{u}^{B})^{-1}(B_{u})$ be the reduced process obtained out of the Bass-Burdzy flow $(\Psi_{u}^{B})_{u\geq 0}$.
Let $(\bar{Z}_{\bar q})_{\bar q\geq 0}$ be a process, 
that conditional on
$(W(y))_{y\in\R}$ is a Markov diffusion with generator
\begin{displaymath}
\dfrac{1}{2}\dfrac{d^{2}}{dy^{2}}
-
\sqrt{2}\Big(\dfrac{d}{dy} W(y)\Big)
\dfrac{d}{dy}
-\operatorname{sgn}(y)\dfrac{d}{dy},
\end{displaymath}
and $(\bar{\lambda}_{\bar q}(y))_{y\in\R, \bar q\geq 0}$
its family of local times.
Let be the time change
\begin{displaymath}
du = (1+2\bar{\lambda}_{\bar q}(\bar{Z}_{\bar q}))^{-2} d \bar q.
\end{displaymath}
Then the time changed process
$(\bar{Z}_{\bar{q}(u)})_{u\geq 0}$ has the same law as
$(\xi_{u})_{u\geq 0}$. Moreover, in this construction of
$(\xi_{u})_{u\geq 0}$, we have the following relation between
the local times:
\begin{displaymath}
\Lambda_{u}(y)=\dfrac{\bar{\lambda}_{\bar q}(y)}
{1+2\bar{\lambda}_{\bar q}(y)}.
\end{displaymath}
\end{cor}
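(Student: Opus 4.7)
The plan is to derive the corollary by specializing Theorem \ref{ThmMixture} to the case $L_{0}\equiv 1$ and then reconciling the resulting representation of the LRM with Definition \ref{DefLSRM}. Indeed, when $L_{0}\equiv 1$ we have $S_{0}(x)=x$, the generator \eqref{EqGenMix} reduces to the one stated in the corollary, and Definition \ref{DefLSRM} gives directly $X_{t}=\xi_{u(t)}$ with $dt=(1-2\Lambda_{u}(\xi_{u}))^{-3/2}\,du$ and occupation profile $L_{t}(y)^{-2}=1-2\Lambda_{u(t)}(y)$. On the other hand, Theorem \ref{ThmMixture} presents the same $X_{t}$ as a time change of $\bar Z$ via $dt=(1+2\bar\lambda_{\bar q}(\bar Z_{\bar q}))^{-1/2}\,d\bar q$. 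The whole task is therefore to compose these two time changes after identifying $\Lambda$ with $\bar\lambda$.

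The key step is to compute the occupation density of $X_{t}$ in the $\bar Z$ representation. For any bounded Borel $f$, the time change and the occupation-time formula for the local times of $\bar Z$ give
\begin{displaymath}
\int_{0}^{t}f(X_{s})\,ds=\int_{0}^{\bar q(t)}f(\bar Z_{p})(1+2\bar\lambda_{p}(\bar Z_{p}))^{-1/2}\,dp=\int_{\R}f(y)\int_{0}^{\bar q(t)}(1+2\bar\lambda_{p}(y))^{-1/2}\,d_{p}\bar\lambda_{p}(y)\,dy.
\end{displaymath}
Since $(1+2\bar\lambda_{p}(y))^{-1/2}\,d_{p}\bar\lambda_{p}(y)=d_{p}(1+2\bar\lambda_{p}(y))^{1/2}$, the inner integral equals $(1+2\bar\lambda_{\bar q(t)}(y))^{1/2}-1$. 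Thus the LRM occupation density is $L_{t}(y)-1=(1+2\bar\lambda_{\bar q(t)}(y))^{1/2}-1$, i.e.\ $L_{t}(y)^{2}=1+2\bar\lambda_{\bar q(t)}(y)$.

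Comparing this with $L_{t}(y)^{-2}=1-2\Lambda_{u(t)}(y)$ yields $(1-2\Lambda_{u(t)}(y))(1+2\bar\lambda_{\bar q(t)}(y))=1$, which solves to the claimed relation
\begin{displaymath}
\Lambda_{u(t)}(y)=\frac{\bar\lambda_{\bar q(t)}(y)}{1+2\bar\lambda_{\bar q(t)}(y)},\qquad 1-2\Lambda_{u(t)}(\xi_{u})=(1+2\bar\lambda_{\bar q(t)}(\bar Z_{\bar q}))^{-1},
\end{displaymath}
since $\xi_{u}=X_{t}=\bar Z_{\bar q}$ at matched times. Substituting the second identity into $du=(1-2\Lambda_{u}(\xi_{u}))^{3/2}\,dt$ and then using $dt=(1+2\bar\lambda_{\bar q}(\bar Z_{\bar q}))^{-1/2}\,d\bar q$ gives $du=(1+2\bar\lambda_{\bar q}(\bar Z_{\bar q}))^{-2}\,d\bar q$, matching the corollary.

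The main technical point is the chain-rule computation for the local time, which requires the joint continuity of $\bar\lambda_{\bar q}(y)$ and absolute continuity of $\bar q\mapsto\bar\lambda_{\bar q}(y)$ for fixed $y$; these hold since $\bar Z$ is a one-dimensional diffusion (time change of a Brownian motion via a continuous scale function $\mathcal S$), so standard semimartingale local-time theory applies after transporting through $\mathcal S$. Everything else reduces to algebra once Theorem \ref{ThmMixture} and Definition \ref{DefLSRM} are invoked.
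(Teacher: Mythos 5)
Your proposal is correct and follows exactly the route the paper has in mind (the paper treats the corollary as an "immediate implication" of Theorem \ref{ThmMixture} and does not spell it out): specialize to $L_0\equiv 1$ so that $S_0=\mathrm{id}$, identify the two expressions $L_t(y)=(1-2\Lambda_{u(t)}(y))^{-1/2}$ and $L_t(y)=(1+2\bar\lambda_{\bar q(t)}(y))^{1/2}$ for the occupation profile, and compose the two time changes $dt=L_t(X_t)^3\,du$ and $dt=(1+2\bar\lambda_{\bar q}(\bar Z_{\bar q}))^{-1/2}\,d\bar q$. The algebra and the local-time chain rule are carried out correctly.
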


Next table sums up the correspondences between different processes, an LRM with initial occupation profile $L_{0}$, the LRM with initial occupation profile $1$, denoted $(\chi_{\tau})_{\tau\geq 0}$, the reduced process $(\xi_{u})_{u\geq 0}$, and the diffusion in random environment $(\bar Z_{\bar q})_{\bar q\geq 0}$. On the rows with "correspondence", all the quantities are equal.

\medskip

\begin{center}

\begin{tabular}{|c|c|c|c|c|}
\hline
Process & $X_{t}$ & $\chi_{\tau}$ & $\xi_{u}$ & 
$\bar Z_{\bar q}$ \\
\shortstack{Description\\\vphantom{a}} & \shortstack{LRM, initial\\occup. profile $L_{0}$} & \shortstack{LRM, initial\\occup. profile $1$} & 
\shortstack{
Reduced \\ 
\vphantom{$L_{0}$}process}
& \shortstack{Diffusion in\\random environment} \\  
\hline 
Space variable & $x$ & $y$ & $y$ & $y$ \\ 

Time variable & $t$ & $\tau$ & $u$ & $\bar q$ \\ 

	Local time & $L_{t}(x)-L_{0}(x)$ & 
	$L^{\chi}_{\tau}(y)-1$ & $\Lambda_{u}(y)$ & 
	 $\bar\lambda_{\bar q}(y)$ \\
	
\shortstack{Space\\correspondence} & $L_{0}(x)^{-2} dx$ & $dy$ & $dy$ & $dy$ \\ 

\shortstack{Time\\correspondence} & $L_{t}(X_{t})^{-3} dt$ & 
$L_{\tau}(\chi_{\tau})^{-3} d\tau$ & $du$ & 
$(1+2\bar{\lambda}_{\bar q}(\bar{Z}_{\bar q}))^{-2} d \bar q$ \\ 

\shortstack{Local time\\correspondence} & 
$\dfrac{1}{2}\bigg(1-\dfrac{L_{0}(x)^{2}}{L_{t}(x)^{2}}\bigg)$ & 
$\dfrac{1}{2}(1-L^{\chi}_{\tau}(y)^{-2})$ & $\Lambda_{u}(y)$ & 
$\dfrac{\bar{\lambda}_{\bar q}(y)}
{1+2\bar{\lambda}_{\bar q}(y)}$ \\ 
\hline 
\end{tabular}
\end{center}

\bigskip

The convergence of the VRJP to a continuous space process has a version for the Edge Reinforced Random Walk.  For references on the ERRW see
\cite{diaconis,keane-rolles,diaconis-rolles,rolles2,merkl-rolles1,ack}. 
It was shown in \cite{SabotTarres2015VRJPSSHSM} that an ERRW has same distribution as the discrete-time process of a VRJP in a network with random conductances, hence it is a mixture of Markovian random walks.

In our context, we consider a discrete time reinforced walk 
$(\widehat{Z}^{(n)}_{k})_{k\geq 0}$ on $2^{-n}\Z$, started at $0$. The weight of an edge
$w_{k}^{(n)}(x,x+2^{-n})$ at time $k$ will be
\begin{multline*}
w_{k}^{(n)}(x-2^{-n},x)= w_{k}^{(n)}(x,x-2^{-n})\\= w_{0}^{(n)}(x-2^{-n},x)+
\operatorname{Card}\lbrace j\in\lbrace 1,\dots,k\rbrace\vert
\lbrace \widehat{Z}^{(n)}_{j-1}, \widehat{Z}^{(n)}_{j}\rbrace =\lbrace x-2^{-n},x\rbrace\rbrace,
\end{multline*}
where $\lbrace\cdot,\cdot\rbrace$ stands for the undirected edge and
$w_{0}^{(n)}(x-2^{-n},x)\in (0,+\infty)$. The transition probabilities are:
\begin{displaymath}
\mathbb{P}(\widehat{Z}^{(n)}_{k+1}= x +\sigma 2^{-n}\vert
\widehat{Z}^{(n)}_{k}=x, (\widehat{Z}^{(n)}_{j})_{0\leq j\leq k})=
\dfrac{w_{k}^{(n)}(x,x +\sigma 2^{-n})}
{w_{k}^{(n)}(x,x - 2^{-n})+w_{k}^{(n)}(x,x + 2^{-n})},
~~\sigma\in\{-1,1\}.
\end{displaymath}
For initial weights we will take
\begin{displaymath}
w_{0}^{(n)}(x-2^{-n},x) = 2^{n-1} L_{0}(x-2^{-n})L_{0}(x).
\end{displaymath}

\begin{prop}
\label{PropConvERRW}
The process
$(\widehat{Z}^{(n)}_{\lfloor 4^{n} q\rfloor}
)_{q\geq 0}$
convergences in law as $n\to +\infty$ towards
$(Z_{q})_{q\geq 0}$,
the mixture of diffusions of Theorem
\ref{ThmMixture}, with the infinitesimal generator given by \eqref{EqGenMix}.
\end{prop}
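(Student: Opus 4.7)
The plan is to combine the mixing representation of the ERRW with an invariance principle for the random environment and a conditional convergence argument for the walk. By the identification between the ERRW and the VRJP of \cite{SabotTarres2015VRJPSSHSM}, the discrete-time process $(\widehat Z^{(n)}_k)_{k\geq 0}$ is a mixture of reversible Markov random walks on $2^{-n}\Z$: conditionally on a random potential $(U^{(n)}(x))_{x\in 2^{-n}\Z}$ with $U^{(n)}(0)=0$ and an explicit ``$\beta$-field'' distribution depending on $w_0^{(n)}$, the walk is Markovian with conductances $w_0^{(n)}(\{x,y\})\, e^{U^{(n)}(x)+U^{(n)}(y)}$ across each edge $\{x,y\}$.

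The first step is to prove the invariance principle
\begin{displaymath}
\bigl(U^{(n)}(x)\bigr)_{x\in 2^{-n}\Z} \;\xrightarrow[n\to\infty]{\mathrm{law}}\; \U(x) = \sqrt{2}\, W(S_0(x)) + |S_0(x)|
\end{displaymath}
uniformly on compacts. In one dimension the $\beta$-field has a Markov-chain structure: starting from $U^{(n)}(0)=0$ and moving outward in either direction, the successive edge-increments $U^{(n)}(x+2^{-n}) - U^{(n)}(x)$ are independent random variables with an inverse-Gaussian-type law whose parameters are read off from $w_0^{(n)}(x,x+2^{-n}) = 2^{n-1} L_0(x) L_0(x+2^{-n})$. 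An asymptotic expansion of the mean and variance of one such increment in $2^{-n}$, combined with the change of variable $y = S_0(x)$ and Donsker's theorem, gives the claim: the variance contributes the Brownian part $\sqrt 2\, W\circ S_0$, while the nonzero mean (pointing away from $0$ on both sides, reflecting the recurrence of the mixing environment) produces the deterministic drift $|S_0|$, the change of scale $S_0$ absorbing any further dependence on $L_0$.

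Conditionally on the environment, with $U^{(n)} \to \U$, the time-rescaled chain $(\widehat Z^{(n)}_{\lfloor 4^n q\rfloor})_{q\geq 0}$ is a reversible Markov chain on $2^{-n}\Z$ with reversible measure proportional to $L_0(x)^2 e^{-2 U^{(n)}(x)}$ and discrete natural scale
\begin{displaymath}
\mathcal S^{(n)}(x) = \sum_{0 < y \leq x} \frac{2\cdot 2^{-n}}{L_0(y-2^{-n}) L_0(y)\, e^{U^{(n)}(y-2^{-n})+U^{(n)}(y)}},
\end{displaymath}
which converges, in the environment, to the natural scale $\mathcal S$ of $Z_q$ defined by \eqref{EqNatScal}, and likewise for the speed measure. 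The factor $4^n$ in the time change is precisely tuned so that one step contributes variance of order $4^{-n}$ in the natural scale, so standard convergence results for one-dimensional reversible Markov chains via scale and speed yield the conditional convergence to $(Z_q)_{q\geq 0}$ with generator \eqref{EqGenMix}. Combining the two steps via a Skorokhod coupling of the environments delivers the annealed convergence.

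The main obstacle is the first step: one must pin down in closed form the joint law of the $\beta$-field on $\Z$ under the non-uniform weights $w_0^{(n)}$, and then carry out a uniform asymptotic analysis of its edge increments on compact sets of $x$ in order to invoke Donsker, checking in particular that the effective diffusivity is exactly $\sqrt 2$ and the drift modulus is exactly $1$ on both sides of the origin. Once the correct law of $\U$ is identified, the conditional convergence of the reversible Markov chain in that random environment is essentially standard.
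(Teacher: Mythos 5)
Your broad outline matches the paper's: (i) show the random environment underlying the ERRW mixing converges in law to $\mathcal U$, (ii) conditionally on the environment show the rescaled walk converges via natural scale and speed measure, and (iii) combine. The key deficiency is in how you set up step (i), and it propagates into step (ii).

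You write that the ERRW is, conditionally on a single potential $U^{(n)}$ with $U^{(n)}(0)=0$, a reversible walk with conductances $w_0^{(n)}(\{x,y\})\,e^{U^{(n)}(x)+U^{(n)}(y)}$, with the increments of $U^{(n)}$ independent and inverse-Gaussian. That is the mixing representation of the \emph{VRJP} (as in Proposition \ref{PropMelange} / Lemma \ref{LemConvEnv}), not of the ERRW. The ERRW representation (Proposition \ref{PropERRWVRJP}) carries an extra layer of randomness: first independent Gamma conductances $\gamma(x-2^{-n},x)\sim\Gamma(2^{n-1}L_0(x-2^{-n})L_0(x),1)$, then, conditionally on these, a field $\widehat{\mathcal U}^{(n)}$ whose increments have an inverse-Gaussian-type law with parameters given by the $\gamma$'s, not by $w_0^{(n)}$; the mixing conductances are $\gamma(x,y)\,e^{-\widehat{\mathcal U}^{(n)}(x)-\widehat{\mathcal U}^{(n)}(y)}$. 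Your formula replaces $\gamma$ by its mean $w_0^{(n)}$ without justification. Consequently you skip the step that is actually nontrivial here: one must show that the natural scale increments $\gamma(x-2^{-n},x)^{-1}$ and the speed/time-change increments $\gamma(x-2^{-n},x)$ concentrate enough that, summed over $\sim 2^n$ edges, they reproduce the deterministic quantities $S_0$ and $\int L_0^2$. This is exactly Lemma \ref{LemTildeL0} in the paper, proved by Doob's maximal inequality applied to martingales built from inverse-gamma and gamma moments. Without it you cannot identify the conditional scale $\widehat{\mathcal S}^{(n)}\to\mathcal S$ nor the limiting speed measure, so step (ii) does not go through as stated.

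A second, minor, point: you say the conditional convergence of the discrete-time chain ``via scale and speed'' is standard. The paper takes care of the discrete-vs-continuous-time mismatch by Poissonizing (replacing $\widehat Z^{(n)}_{\lfloor 4^n q\rfloor}$ with a process jumping at i.i.d.\ $\mathrm{Exp}(4^n)$ times) and then embedding in a reference Brownian motion via local times, so that all the $n$-dependent processes live on the same probability space. It is worth flagging this construction, both because it is what makes the scale-and-speed argument rigorous uniformly in $n$ and because it is reused from the VRJP case (Proposition \ref{PropConvZq}). Once Lemma \ref{LemTildeL0} is in place and the Poissonization/embedding is spelled out, the rest of your plan is sound.
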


\begin{rem}
The fact that the ERRW has a fine mesh limit which is a diffusion in random potential is reminiscent of the Sinai's random walk \cite{Sinai82RW} converging to a Brox diffusion \cite{Brox86Wiener,Seignourel2000SinaiBrox,Pacheco2016SinaiBrox}. 
In the Brox diffusion however the random potential contains only a Wiener term and no drift as in our case. See also 
\cite{Davis1996CPY} for the once-reinforced random walk converging to the Carmona-Petit-Yor process \cite{CPY1998}.
\end{rem}

\medskip

Our paper is organized as follows. In Section \ref{SecBassBurdzy} we will recall some properties of Bass-Burdzy flows and see what it implies for the Linearly Reinforced Motion. 
In Section \ref{SecEnvir} we will show the convergence of the random environment related to the VRJP, and as a consequence the convergence of the VRJP to a mixture of time-changed diffusions. We will also recall the random environment associated to the ERRW and deduce the convergence of the ERRW. In Section \ref{SecConv}, we will show that this mixture of time-changed diffusions coincides with the Linearly Reinforced Motion,
thus concluding the proofs of Theorems \ref{ThmMain} and
\ref{ThmMixture}. We will deduce a couple of consequences of this, such as the long-time behaviour of the LRM. In our paper we will use different time scales, $t$, $\tau$, $u$, $q$, $\bar{q}$, etc., and the notations like $q(t)$ will denote the changes of time that transform one time scale into an other.

Next is a simulation of the Linearly Reinforced Motion with initial occupation profile constant equal to $1$, on the time interval $[0,8]$. The simulation is obtained by running a VRJP on a fine lattice.
On this picture one observes the emergence of a continuous stochastic process in the fine mesh limit. One also 
sees the difference with a Brownian motion. Indeed, one distinguishes significant reinforcement between the levels $0$ and $2$.

\begin{figure}[H]	   
	\centering
	\includegraphics[width=3.4in]{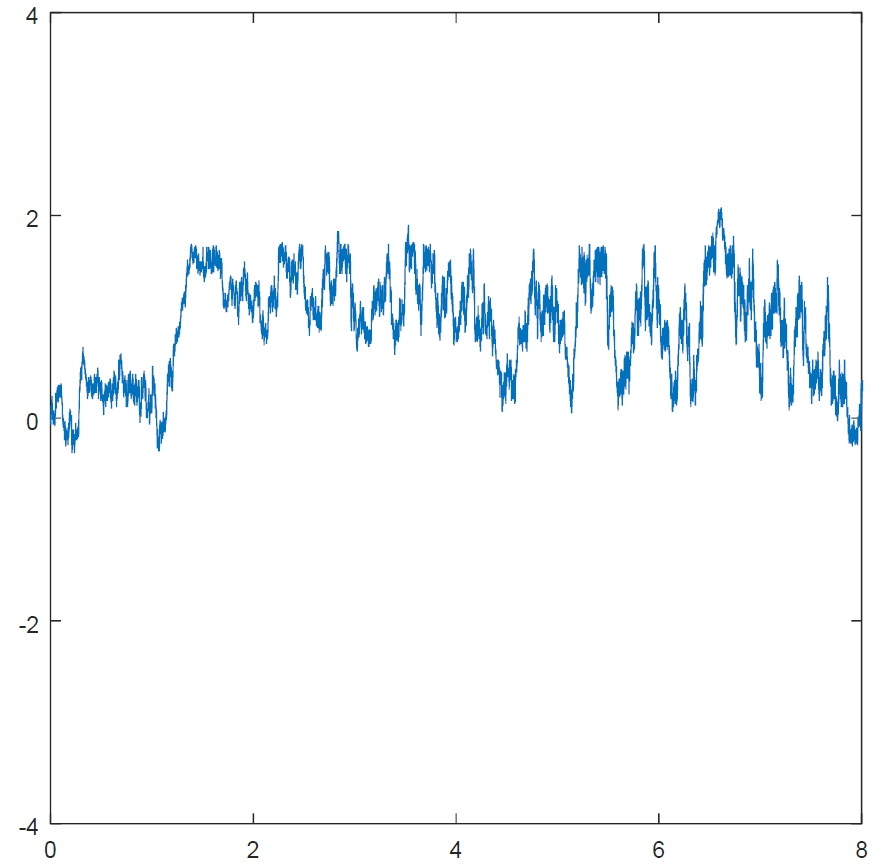}
	\caption{LRM with $L_{0}\equiv 1$ on time-interval $[0,8]$.}
	\label{Fig1}	
\end{figure}

\section{Convergent Bass-Burdzy flow and Linearly Reinforced Motion}
\label{SecBassBurdzy}

Let $(B_{u})_{u\geq 0}$ be a standard Brownian motion on
$\R$, starting at $0$, and let $(\mathcal{F}^{B}_{u})_{u\geq 0}$ be the associated filtration. For $u_{0}\geq 0$, we will denote
\begin{displaymath}
(B\circ\theta_{u_{0}})_{u}=B_{u_{0}+u}-B_{u_0}.
\end{displaymath}

We consider the differential equation
\eqref{EqBassBurdzy}:
\begin{equation*}
\dfrac{d Y_{u}}{du}=
\left\lbrace
\begin{array}{ll}
-1 & \text{if}~Y_{u}>B_{u},\\ 
1 & \text{if}~Y_{u}<B_{u},
\end{array} 
\right.
\end{equation*}
with some initial condition $Y_{0}=y\in\R$.
$(B_{u}-\Psi^{B}_{u}(y))_{y\in\R,u\geq 0}$ is a stochastic flow, solution to the SDE
\begin{equation}
\label{EqBassBurdzySDE}
d\zeta_{u}=dB_{u} -(-\1_{\zeta_{u}>0} + \1_{\zeta_{u}<0})du.
\end{equation}
The equation\eqref{EqBassBurdzySDE} falls into the class studied in
\cite{Attanasio10DiscDrift} (bounded variation drift).
Next we list the main results on the solutions to \eqref{EqBassBurdzy}.

\begin{prop}[Bass-Burdzy \cite{BassBurdzy99StochBiff}, 
Hu-Warren \cite{HuWarren00BBFlow}, Attanasio \cite{Attanasio10DiscDrift}]
\label{PropBBFlow}
For every initial condition 
$Y_{0}=y\in \R$, there is a.s. a unique solution to
\eqref{EqBassBurdzy} which is Lipschitz continuous. We denote it
$(\Psi^{B}_{u}(y))_{u\geq 0}$. For any 
$y_{1},\dots,y_{k}\in\R$, the joint law of
$(B_{u},\Psi^{B}_{u}(y_{1}),\dots,\Psi^{B}_{u}(y_{k}))_{u\geq 0}$
is uniquely determined.
One can construct
$(\Psi^{B}_{u}(y))_{y\in\R,u\geq 0}$ simultaneously for all
$y\in\R$ such that $(y,u)\mapsto \Psi^{B}_{u}(y)$ is continuous on
$\R\times [0,+\infty)$. Moreover, we have the following properties:
\begin{enumerate}
\item The flow $(\Psi^{B}_{u}(y))_{y\in\R,u\geq 0}$ is adapted to the filtration $(\mathcal{F}^{B}_{u})_{u\geq 0}$.
\item (Strong Markov property). For any $U_{0}$ stopping time for 
$(\mathcal{F}^{B}_{u})_{u\geq 0}$,
\begin{displaymath}
\Psi^{B}_{U_{0}+u}(y)=
\Psi^{B\circ\theta_{U_{0}}}_{u}(y-B_{U_{0}})+B_{U_{0}}.
\end{displaymath}
\item A.s., for any $\alpha\in(0,1/2)$ and for all $u\geq 0$, $y\mapsto \Psi^{B}_{u}(y)$ is
a $\mathcal{C}^{1,\alpha}$-diffeomorphism of $\R$.
That is to say, $y\mapsto \Psi^{B}_{u}(y)$ is an increasing bijection,
$\dfrac{\partial}{\partial y} \Psi^{B}_{u}(y)$ is positive on $\R$, and
both the functions
$y\mapsto \dfrac{\partial}{\partial y} \Psi^{B}_{u}(y)$ and
$y\mapsto \dfrac{\partial}{\partial y} (\Psi^{B}_{u})^{-1}(y)$ are
locally $\alpha$-Hölder continuous. 
\item The process $(B_{u}-\Psi^{B}_{u}(y))_{u\geq 0}$ admits semi-martingale local times at level 0, $(\L_{u}(y))_{u\geq 0}$,
\begin{displaymath}
\L_{u}(y)=\lim_{\eps\to 0}\dfrac{1}{2\eps}\int_{0}^{u}
\1_{\vert B_{u}-\Psi^{B}_{v}(y)\vert<\eps} dv,
\end{displaymath}
such that the map $(y,u)\mapsto\L_{u}(y)$ is continuous.
\item For the space derivative of the flow, one has
\begin{equation}
\label{EqSpaceDeriv}
\dfrac{\partial}{\partial y} \Psi^{B}_{u}(y)=
\exp(-2\L_{u}(y)).
\end{equation}
\item The process $((\Psi^{B}_{u})^{-1}(B_{u}))_{u\geq 0}
=(\xi_{u})_{u\geq 0}$ admits occupation densities (local times) 
\\$(\Lambda_{u}(y))_{y\in\R,u\geq 0}$, continuous in $(y,u)$.
Moreover, the following identity holds:
\begin{equation}
\label{EqIDLocTime}
\Lambda_{u}(y)=\dfrac{1}{2}(1-\exp(-2\L_{u}(y))).
\end{equation}
In particular, $\Lambda_{u}(y)\leq 1/2$.
\item The process $(\xi_{u})_{u\geq 0}$ is recurrent,
that is to say, for all $u_{0}\geq 0$, the process will visit a.s. all points after $u_{0}$.
\end{enumerate}
\end{prop}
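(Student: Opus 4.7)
The statement is a compilation, and my plan is to cite items (1)--(4) and (7) from the works \cite{BassBurdzy99StochBiff, HuWarren00BBFlow, Attanasio10DiscDrift} and to derive the two identities (5) and (6) from the flow equation, the first by differentiating in space and the second by combining this with the occupation time formula. Pathwise uniqueness for any fixed initial condition is due to Bass--Burdzy, and is also a consequence of the more general theory of Attanasio for SDEs with bounded-variation drift; pathwise uniqueness makes $\Psi^{B}$ a measurable functional of the driving Brownian path, giving the adaptedness in (1) and, by conditioning at time $U_{0}$ on $(B_{v})_{v\leq U_{0}}$ and applying uniqueness to the restarted ODE, the strong Markov property (2). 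Monotonicity $\Psi^{B}_{u}(y_{1})<\Psi^{B}_{u}(y_{2})$ for $y_{1}<y_{2}$ is immediate since two solutions cannot cross without violating uniqueness, and its upgrade to a $\mathcal{C}^{1,\alpha}$-diffeomorphism in (3) is the main result of \cite{BassBurdzy99StochBiff}. For (4), for fixed $y$, $\zeta^{y}_{u}=B_{u}-\Psi^{B}_{u}(y)$ is a semimartingale, hence admits a local time at $0$; joint continuity in $(y,u)$ is established in \cite{HuWarren00BBFlow}.

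To prove (5), I write the flow in integral form,
\begin{equation*}
\Psi^{B}_{u}(y)=y+\int_{0}^{u}\bigl(\1_{B_{s}>\Psi^{B}_{s}(y)}-\1_{B_{s}<\Psi^{B}_{s}(y)}\bigr)\,ds,
\end{equation*}
and differentiate formally in $y$. The spatial derivative of the integrand is the distribution $-2\,\delta_{0}(B_{s}-\Psi^{B}_{s}(y))\,\partial_{y}\Psi^{B}_{s}(y)$, and by the defining property of $\L_{u}(y)$, integrating against $ds$ yields $-2\int_{0}^{u}\partial_{y}\Psi^{B}_{s}(y)\,d\L_{s}(y)$. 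Thus $F(u)=\partial_{y}\Psi^{B}_{u}(y)$ satisfies the linear equation $dF=-2F\,d\L_{u}(y)$ with $F(0)=1$, which integrates to \eqref{EqSpaceDeriv}. To justify this rigorously I would regularise the sign function by a smooth odd $\phi_{\eps}$ with $\phi_{\eps}\to\operatorname{sgn}$, run the classical flow $\Psi^{B,\eps}$ for which $\partial_{y}\Psi^{B,\eps}_{u}(y)=\exp\bigl(-\int_{0}^{u}\phi'_{\eps}(\Psi^{B,\eps}_{s}(y)-B_{s})\,ds\bigr)$, and pass to the limit using stability of the regularised flows from \cite{Attanasio10DiscDrift} together with the occupation-time convergence $\int_{0}^{u}\phi'_{\eps}(\zeta^{y}_{s})\,ds\to 2\L_{u}(y)$.

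For (6), I combine (5) with the defining property of the occupation densities of $\xi$. Applying $\int_{0}^{u}f(\xi_{v})\,dv=\int_{\R}f(y)\Lambda_{u}(y)\,dy$ to $f=\1_{[y_{1},y_{2}]}$ and using monotonicity of the flow gives
\begin{equation*}
\int_{y_{1}}^{y_{2}}\Lambda_{u}(y)\,dy=\int_{0}^{u}\1_{\Psi^{B}_{v}(y_{1})\leq B_{v}\leq\Psi^{B}_{v}(y_{2})}\,dv.
\end{equation*}
On the other hand, subtracting the integral equations for $\Psi^{B}_{u}(y_{1})$ and $\Psi^{B}_{u}(y_{2})$ and noting that the integrand vanishes whenever $B_{v}$ lies on the same side of both flow lines yields
\begin{equation*}
\Psi^{B}_{u}(y_{2})-\Psi^{B}_{u}(y_{1})=(y_{2}-y_{1})-2\int_{0}^{u}\1_{\Psi^{B}_{v}(y_{1})\leq B_{v}\leq\Psi^{B}_{v}(y_{2})}\,dv.
\end{equation*}
Dividing by $y_{2}-y_{1}$ and letting $y_{2}\downarrow y_{1}=y$ gives $\partial_{y}\Psi^{B}_{u}(y)=1-2\Lambda_{u}(y)$, which together with (5) yields \eqref{EqIDLocTime} and the bound $\Lambda_{u}\leq 1/2$. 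Continuity of $(y,u)\mapsto\Lambda_{u}(y)$ is inherited from (5) and the continuity of $\L$. Recurrence in (7) I would quote from \cite{HuWarren00BBFlow}: the uniform bound $\Lambda_{u}\leq 1/2$ prevents the reduced process from being trapped in any bounded set, and a zero-one argument on the tail $\sigma$-field rules out escape to infinity.

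The one substantive technical obstacle is the regularisation step underlying (5): the drift in \eqref{EqBassBurdzy} is discontinuous, so one cannot directly differentiate under the integral, and passing to the limit $\eps\to 0$ in both the flows and the occupation integrals requires the stability results of \cite{Attanasio10DiscDrift} combined with the semimartingale local-time approximation for $\zeta^{y}$. Everything else in the proposition is either a direct quotation from the cited literature or a short deduction from the integral form of the flow equation.
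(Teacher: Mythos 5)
The paper states Proposition~\ref{PropBBFlow} purely as a compilation of prior results with no proof attached, directing the reader to \cite{BassBurdzy99StochBiff,HuWarren00BBFlow,Attanasio10DiscDrift}; your proposal follows the same citation-based route for items (1)--(4) and (7), and in addition sketches derivations of (5) and (6) that the paper omits. Since the approach agrees with the paper's where the paper has content, and your extra material is correct, the proposal is sound. One remark worth making explicit: the chain you run for (6) is slightly tighter than you advertise --- your identity
\begin{equation*}
\Psi^{B}_{u}(y_{2})-\Psi^{B}_{u}(y_{1})=(y_{2}-y_{1})-2\int_{y_{1}}^{y_{2}}\Lambda_{u}(y)\,dy
\end{equation*}
obtained by subtracting the two integral equations and using the occupation formula for $\xi$ already gives $\partial_{y}\Psi^{B}_{u}(y)=1-2\Lambda_{u}(y)$ \emph{without} invoking (5); (5) then supplies $\exp(-2\L_{u}(y))$ for the left side, yielding \eqref{EqIDLocTime}. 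This is the cleanest route and is essentially the argument in \cite{HuWarren00BBFlow}. For (7), your heuristic that ``$\Lambda_{u}\leq 1/2$ prevents trapping in a bounded set'' is not really the mechanism --- a transient diffusion also has small occupation densities; the actual argument in \cite{HuWarren00BBFlow} is that $\L_{u}(y)\to+\infty$ for every $y$ as $u\to+\infty$, equivalently $\Lambda_{u}(y)\to 1/2$, which forces infinitely many visits. Since you quote (7) anyway, this does not create a gap, but the heuristic as stated is misleading.
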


Next we show some elementary properties of $(\xi_{u})_{u\geq 0}$ which we did not find as such in our references
\cite{BassBurdzy99StochBiff,HuWarren00BBFlow,Attanasio10DiscDrift}.

\begin{prop}
\label{PropAdditional}
$(\xi_{u})_{u\geq 0}$ satisfies:
\begin{enumerate}
\item A.s., for any $\alpha\in (0,1/2)$, the process
$(\xi_{u})_{u\geq 0}$ is locally $\alpha$-Hölder continuous.
\item Let $u>0$ and consider $(u_{i,j})_{0\leq j\leq N_{i},i\geq 0}$
a deterministic family such that 
\begin{displaymath}
0=u_{i,0}<u_{i,1}<\dots<u_{i,N_{i}-1}<u_{N_{i}}=u
\end{displaymath}
and
\begin{displaymath}
\lim_{i\to +\infty}\max_{1\leq j\leq N_{i}}(u_{i,j}-u_{i,j-1})=0.
\end{displaymath}
Then,
\begin{equation}
\label{EqQV}
\lim_{i\to +\infty}\sum_{j=1}^{N_{i}}(\xi_{u_{i,j}}-\xi_{u_{i,j-1}})^{2}
=\int_{0}^{u}(1-2\Lambda_{v}(\xi_{v}))^{-2} dv
\end{equation}
in probability.
\item Let $(\rho_{u})_{u\geq 0}$ be the process
\begin{displaymath}
\rho_{u}=\xi_{u}-\int_{0}^{u}
(1-2\Lambda_{v}(\xi_{v}))^{-1} dB_{v}.
\end{displaymath}
For a family $(u_{i,j})$ as above,
\begin{equation*}
\lim_{i\to +\infty}\sum_{j=1}^{N_{i}}(\rho_{u_{i,j}}-\rho_{u_{i,j-1}})^{2}
=0
\end{equation*}
in probability.
\end{enumerate}
\end{prop}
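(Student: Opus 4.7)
The plan is to regard the reduced process as $\xi_u=\Phi_u(B_u)$ where $\Phi_u:=(\Psi^B_u)^{-1}$, and to exploit two structural features of the flow. First, combining \eqref{EqSpaceDeriv} and \eqref{EqIDLocTime},
\[
\partial_y\Phi_u(y)=\exp\!\bigl(2\L_u(\Phi_u(y))\bigr)=\bigl(1-2\Lambda_u(\Phi_u(y))\bigr)^{-1},
\]
which is a.s.\ locally bounded on any compact set of $(y,u)$ by continuity of $(u,y)\mapsto\L_u(y)$. Second, since the right-hand side of \eqref{EqBassBurdzy} is bounded by $1$, the map $u\mapsto\Psi^B_u(z)$ is $1$-Lipschitz in $u$; inverting the flow and using the local bound on $\partial_y\Phi_u$ gives $|\Phi_{u'}(y)-\Phi_u(y)|\leq C|u'-u|$ locally.

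For part (1), I decompose
\[
\xi_{u'}-\xi_u=\bigl[\Phi_u(B_{u'})-\Phi_u(B_u)\bigr]+\bigl[\Phi_{u'}(B_{u'})-\Phi_u(B_{u'})\bigr].
\]
The first bracket is bounded by $C\,|B_{u'}-B_u|$ by local Lipschitzness of $\Phi_u$ in $y$ (item (3) of Proposition \ref{PropBBFlow}), and the second by $C\,|u'-u|$ by the time-Lipschitz estimate above. Combined with the $\alpha$-Hölder regularity of Brownian motion, this yields part (1).

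For parts (2) and (3), I refine the first bracket via the $\mathcal{C}^{1,\alpha}$ regularity of $\Phi_u$ in $y$:
\[
\Phi_{u_{j-1}}(B_{u_j})-\Phi_{u_{j-1}}(B_{u_{j-1}})=\partial_y\Phi_{u_{j-1}}(B_{u_{j-1}})\,(B_{u_j}-B_{u_{j-1}})+R_j,\qquad |R_j|\le K|B_{u_j}-B_{u_{j-1}}|^{1+\alpha},
\]
with $\partial_y\Phi_{u_{j-1}}(B_{u_{j-1}})=(1-2\Lambda_{u_{j-1}}(\xi_{u_{j-1}}))^{-1}$, and I denote the second bracket by $T_j$, satisfying $|T_j|\le K(u_{i,j}-u_{i,j-1})$. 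For (2), summing squares, the main contribution
\[
\sum_j(1-2\Lambda_{u_{i,j-1}}(\xi_{u_{i,j-1}}))^{-2}(B_{u_{i,j}}-B_{u_{i,j-1}})^2
\]
converges in probability to $\int_0^u(1-2\Lambda_v(\xi_v))^{-2}dv$ by the standard theorem on the approximation of quadratic variation of Brownian motion weighted by a continuous adapted integrand (continuity of the integrand follows from item (6) of Proposition \ref{PropBBFlow} and from (1)). The remainders satisfy $\sum R_j^2\le K^2\sum|B_{u_{i,j}}-B_{u_{i,j-1}}|^{2+2\alpha}\to 0$ and $\sum T_j^2\le K^2\sum(u_{i,j}-u_{i,j-1})^2\to 0$ in probability, and the cross terms vanish by Cauchy–Schwarz. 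For (3), I rewrite
\[
\rho_{u_{i,j}}-\rho_{u_{i,j-1}}=\biggl[\partial_y\Phi_{u_{i,j-1}}(B_{u_{i,j-1}})(B_{u_{i,j}}-B_{u_{i,j-1}})-\int_{u_{i,j-1}}^{u_{i,j}}\!(1-2\Lambda_v(\xi_v))^{-1}dB_v\biggr]+R_j+T_j;
\]
the bracket is the gap between the left-point Riemann sum and the Itô integral of a continuous adapted integrand, whose summed squares tend to $0$ in probability by the Itô isometry, while $R_j$ and $T_j$ are controlled as for (2).

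The main obstacle will be handling the random local bounds: both the constant $K$ (the local Hölder semi-norm of $\partial_y\Phi_u$) and the local supremum of $\partial_y\Phi$ depend on the a.s.\ finite but random quantity $\sup_{v\leq u,|y|\leq M}\L_v(y)$, and the integrand $(1-2\Lambda_v(\xi_v))^{-1}$ is not globally bounded. I will localize by the stopping times $\tau_N=\inf\{v:\sup_{|y|\le N}\L_v(y)\ge N\}\wedge \inf\{v:|\xi_v|\ge N\}$, prove the convergences on $\{u<\tau_N\}$ with deterministic constants, and then let $N\to\infty$ using that $\tau_N\to\infty$ a.s.\ by continuity of $\L$ and of $\xi$.
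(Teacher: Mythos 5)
Your proposal is correct and follows essentially the same strategy as the paper's proof: decompose the increment $\xi_{u_{j}}-\xi_{u_{j-1}}$ into a leading term $\partial_y\Phi_{u_{j-1}}(B_{u_{j-1}})(B_{u_j}-B_{u_{j-1}})=(1-2\Lambda_{u_{j-1}}(\xi_{u_{j-1}}))^{-1}\Delta B_j$ plus remainders that are $O(\Delta u)$ in time and small in space, then use localization by stopping times to make the random constants deterministic before passing to the limit. Two cosmetic differences are worth noting. First, your space/time splitting is the direct $\Phi_u(B_{u'})-\Phi_u(B_u)+\Phi_{u'}(B_{u'})-\Phi_u(B_{u'})$, whereas the paper reaches the same leading term via the cocycle identity $\xi_{u_2}=(\Psi^B_{u_1})^{-1}\bigl((\Psi^{B\circ\theta_{u_1}}_{u_2-u_1})^{-1}(B_{u_2}-B_{u_1})+B_{u_1}\bigr)$; both lead to the same estimates. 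Second, you bound the space remainder by $K|\Delta B_j|^{1+\alpha}$ using the $\mathcal{C}^{1,\alpha}$ regularity of the inverse flow, while the paper only invokes continuity of $y\mapsto\partial_y(\Psi^B_u)^{-1}(y)$ to obtain an $o(|\Delta B_j|+\Delta u_j)$ modulus-of-continuity bound; your bound is slightly stronger than needed, but since Proposition \ref{PropBBFlow}(3) supplies the H\"older regularity, this causes no issue. Your localization scheme (stopping at large $\L$ and large $|\xi|$) is an equivalent variant of the paper's stopping time $U_A$ based on $\sup_y\partial_y(\Psi^B_v)^{-1}(y)$.
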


\begin{proof}
First note that for any $y\in\R$,
\begin{displaymath}
\vert (\Psi_{u}^{B})^{-1}(y)-y\vert=
\vert \Psi_{u}^{B}\circ(\Psi_{u}^{B})^{-1}(y)-
(\Psi_{u}^{B})^{-1}(y)\vert \leq u,
\end{displaymath}
and
\begin{displaymath}
\vert (\Psi_{u}^{B})^{-1}(y_{2})-(\Psi_{u}^{B})^{-1}(y_{1})\vert
\leq\exp\big(2\sup_{y\in\R}\L_{u}(y)\big)\vert y_{2}-y_{1}\vert
=\big(1-2\sup_{y\in\R}\Lambda_{u}(y)\big)^{-1}\vert y_{2}-y_{1}\vert,
\end{displaymath}
where for the second inequality we used that
\begin{displaymath}
\dfrac{\partial}{\partial y}(\Psi^{B}_{u})^{-1}(y)=
\exp(2\mathcal{L}_{u}((\Psi^{B}_{u})^{-1}(y)))=
(1-2\Lambda_{u}((\Psi^{B}_{u})^{-1}(y)))^{-1}.
\end{displaymath}
Then write
\begin{displaymath}
\xi_{u_{2}}=(\Psi_{u_{2}}^{B})^{-1}(B_{u_{2}})=
(\Psi_{u_{1}}^{B})^{-1}(
(\Psi_{u_{2}-u_{1}}^{B\circ\theta_{u_{1}}})^{-1}
(B_{u_{2}}-B_{u_{1}})+B_{u_{1}}).
\end{displaymath}
It follows that
\begin{eqnarray*}
\label{EqHolder}
\vert\xi_{u_{2}}-\xi_{u_{1}}\vert&\leq&
\big(1-2\sup_{y\in\R}\Lambda_{u_{1}}(y)\big)^{-1}
\vert(\Psi_{u_{2}-u_{1}}^{B\circ\theta_{u_{1}}})^{-1}
(B_{u_{2}} - B_{u_{1}})\vert
\\&\leq& 
\big(1-2\sup_{y\in\R}\Lambda_{u_{1}}(y)\big)^{-1}
\vert B_{u_{2}}-B_{u_{1}}\vert
\\&&+
\big(1-2\sup_{y\in\R}\Lambda_{u_{1}}(y)\big)^{-1}
\vert(\Psi_{u_{2}-u_{1}}^{B\circ\theta_{u_{1}}})^{-1}
(B_{u_{2}} - B_{u_{1}})
-(B_{u_{2}} - B_{u_{1}})\vert
\\&\leq&
\big(1-2\sup_{y\in\R}\Lambda_{u_{1}}(y)\big)^{-1}
\vert B_{u_{2}}-B_{u_{1}}\vert
+\big(1-2\sup_{y\in\R}\Lambda_{u_{1}}(y)\big)^{-1}
(u_{2}-u_{1}),
\end{eqnarray*}
which implies (1).

Let us show (2).
Refining the above computation, one gets that
\begin{align}
\label{EqIto}
\xi_{u_{i,j}}-\xi_{u_{i,j-1}}=&
(1-2\Lambda_{u_{i,j-1}}(\xi_{u_{i,j-1}}))^{-1}
(B_{u_{i,j}}-B_{u_{i,j-1}})
\\ \nonumber
&+o(\vert B_{u_{i,j}}-B_{u_{i,j-1}}\vert +(u_{i,j}-u_{i,j-1}))
+O(u_{i,j}-u_{i,j-1}),
\end{align}
where 
\begin{multline*}
\vert o(\vert B_{u_{i,j}}-B_{u_{i,j-1}}\vert
+(u_{i,j}-u_{i,j-1}))\vert\leq
(\vert B_{u_{i,j}}-B_{u_{i,j-1}}\vert + (u_{i,j}-u_{i,j-1}))
\\\times\sup_{\substack{\vert y_{1}-y_{2}\vert \leq \vert B_{u_{i,j}}-B_{u_{i,j-1}}\vert \\+2(u_{i,j}-u_{i,j-1})}}
\vert
(1-2\Lambda_{u_{i,j-1}}(y_{2}))^{-1}-
(1-2\Lambda_{u_{i,j-1}}(y_{1}))^{-1}
\vert,
\end{multline*}
and
\begin{displaymath}
\vert O(u_{i,j}-u_{i,j-1})\vert
\leq
\big(1-2\sup_{y\in\R}\Lambda_{u_{1}}(y)\big)^{-1}
(u_{2}-u_{1}).
\end{displaymath}
Thus, the sum in
\eqref{EqQV} behaves, as $i\to +\infty$, like
\begin{displaymath}
\sum_{j=1}^{N_{i}}
(1-2\Lambda_{u_{i,j-1}}(\xi_{u_{i,j-1}}))^{-2}
(B_{u_{i,j}}-B_{u_{i,j-1}})^{2}.
\end{displaymath}
To conclude, we use that
\begin{displaymath}
\lim_{i\to +\infty}
\sum_{j=1}^{N_{i}}
\vert(B_{u_{i,j}}-B_{u_{i,j-1}})^{2}-
(u_{i,j}-u_{i,j-1})\vert = 0
\end{displaymath}
in probability.

Let us show (3). 
Let $A>1$. Here $U_{A}$ will denote the stopping time
\begin{displaymath}
U_{A}=\inf\Big\{v\geq 0\Big\vert\sup_{y\in\R}
\dfrac{\partial}{\partial y}(\Psi^{B}_{v})^{-1}(y)\geq A
\Big\}.
\end{displaymath}
Then
\begin{displaymath}
\lim_{A\to +\infty}
\mathbb{P}(U_{A}>u)=1.
\end{displaymath}
We use \eqref{EqIto} and write
\begin{align*}
\rho_{u_{i,j}\wedge U_{A}}-\rho_{u_{i,j-1}\wedge U_{A}}=&
(1-2\Lambda_{u_{i,j-1}\wedge U_{A}}(\xi_{u_{i,j-1}\wedge U_{A}}))^{-1}
(B_{u_{i,j}\wedge U_{A}}-B_{u_{i,j-1}\wedge U_{A}})
\\&-\int_{u_{i,j-1}\wedge U_{A}}^{u_{i,j}\wedge U_{A}}
(1-2\Lambda_{v}(\xi_{v}))^{-1} dB_{v}
\\ \nonumber
&+o(\vert B_{u_{i,j}\wedge U_{A}}-B_{u_{i,j-1}\wedge U_{A}}\vert +(u_{i,j}-u_{i,j-1}))
+O(u_{i,j}-u_{i,j-1}).
\end{align*}
The sum 
\begin{displaymath}
\sum_{j=1}^{N_{i}}
(\rho_{u_{i,j}\wedge U_{A}}-\rho_{u_{i,j-1}\wedge U_{A}})^{2}
\end{displaymath}
behaves as $i\to +\infty$ like
\begin{displaymath}
\sum_{j=1}^{N_{i}}
\Big(
(1-2\Lambda_{u_{i,j-1}\wedge U_{A}}(\xi_{u_{i,j-1}\wedge U_{A}}))^{-1}
(B_{u_{i,j}\wedge U_{A}}-B_{u_{i,j-1}\wedge U_{A}})-
\int_{u_{i,j-1}\wedge U_{A}}^{u_{i,j}\wedge U_{A}}
(1-2\Lambda_{v}(\xi_{v}))^{-1} dB_{v}
\Big)^{2}.
\end{displaymath}
The expectation of the quantity above equals
\begin{multline*}
\sum_{j=1}^{N_{i}}
\E\Big[\int_{u_{i,j-1}\wedge U_{A}}^{u_{i,j}\wedge U_{A}} 
\big(
\dfrac{\partial}{\partial y}(\Psi^{B}_{v})^{-1}(\xi_{v})
-
\dfrac{\partial}{\partial y}(\Psi^{B}_{u_{i,j-1}\wedge U_{A}})^{-1}(\xi_{u_{i,j-1}\wedge U_{A}})
\big)^{2}
dv\Big]
\\=
o\Big(\sum_{j=1}^{N_{i}}(u_{i,j}-u_{i,j-1})\Big)
=o(1).
\qedhere
\end{multline*}
\end{proof}

\begin{rem}
The process $(\xi_{u})_{u\geq 0}$ has a decomposition into a sum of a local martingale and a process with $0$ quadratic variation, both adapted to the Brownian filtration $(\mathcal{F}^{B}_{u})_{u\geq 0}$. Following Föllmer's terminology \cite{Follmer81DirichletProc}, it is a Dirichlet process. However, it is believed not to be a semi-martingale \cite{HuWarren00BBFlow}, which would mean that
$(\rho_{u})_{u\geq 0}$  has an infinite total variation. The reason for that would be that the terms
$o(\vert B_{u_{i,j}}-B_{u_{i,j-1}}\vert
+(u_{i,j}-u_{i,j-1}))$ in \eqref{EqIto} are not
$O((B_{u_{i,j}}-B_{u_{i,j-1}})^{2})$, since the flow
$(\Psi_{u}^{B})_{u\geq 2}$ is not $\mathcal{C}^{2}$ in space.
One could push up to showing that $(\rho_{u})_{u\geq 0}$ is locally
$3/4-\varepsilon$ Hölder continuous. We believe that this 
$3/4-\varepsilon$ is optimal.
\end{rem}

Next are some elementary properties of the LRM $(X_{t})_{t\geq 0}$
(see Definition \ref{DefLSRM}).

\begin{prop}
\label{PropLSRM}
The following properties hold.
\begin{enumerate}
\item A.s., $X_{t}$ is defined for all $t\geq 0$.
\item A.s., for any $\alpha\in (0,1/2)$, the process
$(X_{t})_{t\geq 0}$ is locally $\alpha$-Hölder continuous.
\item Let $L_{t}$ be the occupation profile at time $t$, defined by
\eqref{EqLt}.
Then $(L_{t}(x)-L_{0}(x))_{x\in\R}$ is the occupation density of 
$X$ on time-interval $[0,t]$, that is to say, for any
$f:\R\rightarrow\R$ bounded,
\begin{displaymath}
\int_{0}^{t}f(X_{s}) ds =
\int_{\R}f(x)(L_{t}(x)-L_{0}(x)) dx.
\end{displaymath}
\item 
(Strong Markov property).
Let $T_{0}$ be a stopping time for the natural filtration
$(\mathcal{F}^{X}_{t})_{t\geq 0}$ of 
$(X_{t})_{t\geq 0}$. Then
$(X_{T_{0}+t})_{t\geq 0}$ is distributed as a Linearly Reinforced Motion starting from $X_{T_{0}}$, with
initial occupation profile $L_{T_{0}}$.
\item The process $(X_{t})_{t\geq 0}$ is recurrent,
that is to say, for all $t_{0}\geq 0$, the process will visit a.s. all points after $t_{0}$.
\item Let $x_{1}<x_{2}\in\R$. Then
\begin{equation}
\label{EqWhichHitFirst1}
\mathbb{P}(\text{After time}~t_{0}, X_{t}~\text{hits}~x_{2}~
\text{before}~x_{1}\vert \mathcal{F}^{X}_{t_{0}}, 
x_{1}<X_{t_{0}}<x_{2})\geq \dfrac{1}{2}
\end{equation}
is equivalent to
\begin{equation}
\label{EqWhichHitFirst2}
\int_{X_{t_{0}}}^{x_{2}} L_{t_{0}}(x)^{-2} dx\leq
\int_{x_{1}}^{X_{t_{0}}} L_{t_{0}}(x)^{-2} dx.
\end{equation}
More precisely, let
$y_{1}<0$ and $y_{2}>0$. Let
$U_{y_{1}}^{\downarrow}$ be the first time the drifted Brownian motion
$B_{u}-u$ hits $y_{1}$ and 
$U_{y_{2}}^{\uparrow}$ the first time 
$B_{u}+u$, hits $y_{2}$, with $B_{0}=0$.
Then
\begin{equation}
\label{EqUupdown}
\mathbb{P}(\text{After time}~t_{0}, X_{t}~\text{hits}~x_{2}~
\text{before}~x_{1}\vert \mathcal{F}^{X}_{t_{0}}, 
x_{1}<X_{t_{0}}<x_{2})=
\mathbb{P}(U_{y_{2}}^{\uparrow}<U_{y_{1}}^{\downarrow}),
\end{equation}
where
\begin{displaymath}
y_{1}=\int_{x_{1}}^{X_{t_{0}}} L_{t_{0}}(x)^{-2} dx,
\qquad
y_{2}=\int_{X_{t_{0}}}^{x_{2}} L_{t_{0}}(x)^{-2} dx.
\end{displaymath}
\item Let $t>0$ and consider $(t_{i,j})_{0\leq j\leq N_{i},i\geq 0}$
a deterministic family such that 
\begin{displaymath}
0=t_{i,0}<t_{i,1}<\dots<t_{i,N_{i}-1}<t_{N_{i}}=u
\end{displaymath}
and
\begin{displaymath}
\lim_{i\to +\infty}\max_{1\leq j\leq N_{i}}(t_{i,j}-t_{i,j-1})=0.
\end{displaymath}
Then
\begin{displaymath}
\lim_{i\to +\infty}\sum_{j=1}^{N_{i}}
(X_{t_{i,j}}-X_{t_{i,j-1}})^{2}
=\int_{0}^{t}L_{s}(X_{s}) ds
\end{displaymath}
in probability.
Let $(R_{t})_{t\geq 0}$ be the process
\begin{displaymath}
R_{t}=X_{t}-\int_{0}^{t} L_{s}(X_{s})^{2}d_{s}B_{u(s)},
\end{displaymath}
where $u(\cdot)$ is the inverse time-change of \eqref{EqTimeChange}.
Then, for a family $(t_{i,j})$ as above,
\begin{displaymath}
\lim_{i\to +\infty}\sum_{j=1}^{N_{i}}
(R_{t_{i,j}}-R_{t_{i,j-1}})^{2}
=0
\end{displaymath}
in probability.
\end{enumerate}
\end{prop}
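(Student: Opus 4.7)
The strategy is to transport each of the seven statements from the reduced process $(\xi_u)_{u\geq 0}$ (Propositions \ref{PropBBFlow} and \ref{PropAdditional}) through the scale change $S_0^{-1}$ and the time change \eqref{EqTimeChange}. Since $L_0$ is continuous and positive and \eqref{EqCond} holds, $S_0:\R\to\R$ is a $\mathcal{C}^1$ bijection which is bi-Lipschitz on every compact, hence $X_t=S_0^{-1}(\xi_{u(t)})$ is well defined on every interval on which $u(t)<+\infty$.

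For (1), recurrence of $\xi$ combined with $\Lambda_u(y)=\frac{1}{2}(1-e^{-2\L_u(y)})$ yields $\Lambda_\infty(y)=1/2$ a.s. for every $y$. Together with the occupation density formula for $\xi$ and the antiderivative identity
\begin{equation*}
\int_0^u(1-2\Lambda_v(y))^{-3/2}\,d\Lambda_v(y)=(1-2\Lambda_u(y))^{-1/2}-1,
\end{equation*}
this gives
\begin{equation*}
t(+\infty)=\int_\R L_0(S_0^{-1}(y))^3\bigl[(1-2\Lambda_\infty(y))^{-1/2}-1\bigr]\,dy=+\infty,
\end{equation*}
so $u(t)<+\infty$ for every $t\geq 0$. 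Item (5) then follows from recurrence of $\xi$, bijectivity of $S_0^{-1}$, and (4) to cover the case $t_0>0$. Item (2) is obtained by combining Proposition \ref{PropAdditional}(1) with local Lipschitz continuity of $S_0^{-1}$ and the local upper bound $\frac{du}{dt}=L_0(X_t)^{-3}(1-2\Lambda_{u(t)}(\xi_{u(t)}))^{3/2}\leq L_0(X_t)^{-3}$.

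Item (3) is the main computational step: changing variables $y=S_0(x)$ in $\int f(x)(L_t(x)-L_0(x))\,dx$ and expanding $(1-2\Lambda_{u(t)}(y))^{-1/2}-1$ via the same antiderivative identity, Fubini and the occupation density formula reduce the right-hand side to $\int_0^{u(t)}g(\xi_v)(1-2\Lambda_v(\xi_v))^{-3/2}\,dv$ with $g(y):=f(S_0^{-1}(y))L_0(S_0^{-1}(y))^3$, and undoing \eqref{EqTimeChange} this equals $\int_0^t f(X_s)\,ds$. Item (7) follows from Proposition \ref{PropAdditional}(2)(3) through the infinitesimal approximation
\begin{equation*}
X_t-X_{t_0}\approx L_0(X_{t_0})^2(\xi_{u(t)}-\xi_{u(t_0)}),
\end{equation*}
combined with $L_{t_0}(X_{t_0})=L_0(X_{t_0})(1-2\Lambda_{u(t_0)}(\xi_{u(t_0)}))^{-1/2}$ and Remark \ref{RemTimeChange}.

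For the strong Markov property (4), I apply Proposition \ref{PropBBFlow}(2) at $U_0:=u(T_0)$. Combining \eqref{EqSpaceDeriv} and \eqref{EqIDLocTime} gives $1-2\Lambda_u(y)=\partial_y\Psi^B_u(y)$, from which the decisive change-of-scale identity
\begin{equation*}
S_0'(x):=\int_{X_{T_0}}^x L_{T_0}(r)^{-2}\,dr=\Psi^B_{U_0}(S_0(x))-B_{U_0}
\end{equation*}
follows by the substitution $z=S_0(r)$. One then verifies directly that the LRM associated to $(X_{T_0},L_{T_0})$ driven by $B\circ\theta_{U_0}$ reproduces $(X_{T_0+t})_{t\geq 0}$. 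Finally (6) reduces by (4) to $t_0=0$ and then, via $S_0$, to the hitting problem for $\xi$: for $y>0$ the Bass-Burdzy ODE gives $\Psi^B_u(y)=y-u$ until it first meets $B_u$, so $\xi$ hits $y_2:=S_0(x_2)$ precisely at $U^\uparrow_{y_2}$; symmetrically $\xi$ hits $y_1:=S_0(x_1)<0$ at $U^\downarrow_{y_1}$, giving \eqref{EqUupdown}. The $B\leftrightarrow -B$ symmetry (yielding $\mathbb{P}(U^\uparrow_a<U^\downarrow_{-a})=1/2$) combined with monotonicity in the two endpoints then produces \eqref{EqWhichHitFirst1}$\Leftrightarrow$\eqref{EqWhichHitFirst2}.

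I expect (4) to be the main obstacle: the identification of $S_0'$ with $\Psi^B_{U_0}\circ S_0-B_{U_0}$ rests on the subtle interplay between \eqref{EqSpaceDeriv}--\eqref{EqIDLocTime}, and one must first check that $U_0$ is genuinely an $(\mathcal{F}^B_u)$-stopping time, i.e., that $(\mathcal{F}^X_t)_{t\geq 0}$ embeds into the time-changed Brownian filtration $(\mathcal{F}^B_{u(t)})_{t\geq 0}$.
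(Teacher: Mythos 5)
Your proposal follows the paper's own proof essentially item by item: each property is transported from the reduced process $\xi$ through the scale change $S_0$ and the time change \eqref{EqTimeChange}, with the same Fubini/occupation-density identity for (3), the same identification $\frac{d}{dx}\bigl(\Psi^B_{U_0}\circ S_0\bigr)(x)=L_{T_0}(x)^{-2}$ for (4), the ODE description $\Psi^B_u(y)=y\mp u$ before the first meeting with $B$ for (6), and the substitution to Proposition~\ref{PropAdditional}(2)--(3) for (7). The one point you flag---that $u(T_0)$ must be checked to be an $(\mathcal F^B_u)$-stopping time before invoking the strong Markov property of the flow---is a legitimate remark which the paper asserts without elaboration, and otherwise your route and the paper's coincide.
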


\begin{proof}
(1): This is equivalent to
\begin{displaymath}
\int_{0}^{+\infty}
L_{0}(X_{u})^{3}(1-2\Lambda_{u}(\xi_{u}))^{-\frac{3}{2}} 
du = +\infty~\text{a.s.}
\end{displaymath}
Fix $y_{1}<y_{2}\in\R$. Since $L_{0}$ is positive bounded away from $0$ on $[S_{0}^{-1}(y_{2}),S_{0}^{-1}(y_{1})]$, it is enough to show that
\begin{displaymath}
\int_{0}^{+\infty}
\1_{y_{1}<\xi_{u}<y_{2}}(1-2\Lambda_{u}(\xi_{u}))^{-\frac{3}{2}} 
du = +\infty~\text{a.s.}
\end{displaymath}
Using the elementary properties of occupation densities, one can show that the above integrals equals
on $[S_{0}^{-1}(y_{2}),S_{0}^{-1}(y_{1})]$, it is enough to show that
\begin{displaymath}
\int_{y_{1}<y<y_{2}}\int_{0}^{+\infty}
(1-2\Lambda_{u}(\xi_{u}))^{-\frac{3}{2}} d_{u}\Lambda_{u}(y)
dy .
\end{displaymath}
Applying the identity \eqref{EqIDLocTime}, get that it equals in turn
\begin{displaymath}
\int_{y_{1}<y<y_{2}}\int_{0}^{+\infty}
\exp(3\L_{u}(y)) d_{u}\L_{u}(y)dy=
\dfrac{1}{3}\int_{y_{1}<y<y_{2}}(\exp(3\L_{+\infty}(y))-1) dy.
\end{displaymath}
Conclude using that a.s.,
$\forall y\in\R$, $\L_{+\infty}(y)=
\lim_{u\to +\infty}\L_{u}(y)=+\infty$
\cite{BassBurdzy99StochBiff,HuWarren00BBFlow}.

(2): This follows from the local Hölder continuity of $(\xi_{u})_{u\geq 0}$
and the fact that we perform $\mathcal{C}^{1}$ changes of scale and time.

(3): Use that
\begin{eqnarray*}
\int_{0}^{t}f(X_{s}) ds&=&
\int_{0}^{u(t)}f(S_{0}^{-1}(\xi_{v})) 
L_{0}(S_{0}^{-1}(\xi_{v}))^{3}
(1-2\Lambda_{v}(\xi_{v}))^{-\frac{3}{2}}dv
\\&=&
\int_{\R}\int_{0}^{u(t)}
f(S_{0}^{-1}(y)) 
L_{0}(S_{0}^{-1}(y))^{3}
(1-2\Lambda_{v}(y))^{-\frac{3}{2}}
d_{v}\Lambda_{v}(y) dy
\\&=&\int_{\R} f(S_{0}^{-1}(y)) 
L_{0}(S_{0}^{-1}(y))^{3}
((1-2\Lambda_{u(t)}(y))^{-\frac{1}{2}}-1) dy
\\&=&
\int_{\R} f(x) L_{0}(x)
((1-2\Lambda_{u(t)}(S_{0}(x)))^{-\frac{1}{2}}-1) dx.
\end{eqnarray*}

(4): Let $U_{0}=u(T_{0})$. It is a stopping time for the driving Brownian motion $(B_{u})_{u\geq 0}$.
Let $\tilde{\xi}_{u}=(\Psi^{B\circ\theta_{U_{0}}}_{u})^{-1}
((B\circ\theta_{U_{0}})_{u})$. The process
$(\tilde{\xi}_{u})_{u\geq 0}$ has the same law as $(\xi_{u})_{u\geq 0}$.
Moreover,
\begin{displaymath}
\xi_{U_{0}+u}=(\Psi^{B}_{U_{0}})^{-1}(\tilde{\xi}_{u}+B_{U_{0}}).
\end{displaymath}
Let
\begin{displaymath}
S_{T_{0}}(x)=\int_{X_{T_{0}}}^{x}L_{T_{0}}(r)^{-2} dr.
\end{displaymath}
We have that
\begin{displaymath}
X_{T_{0}+t}=S_{0}^{-1}(\xi_{u(T_{0}+t)})=
(\Psi^{B}_{U_{0}}\circ S_{0})^{-1}
(\tilde{\xi}_{u(T_{0}+t)-U_{0}}+B_{U_{0}}).
\end{displaymath}
Moreover,
\begin{displaymath}
(\Psi^{B}_{U_{0}}\circ S_{0})^{-1}
(0+B_{U_{0}})= X_{T_{0}}
\end{displaymath}
and, following \eqref{EqSpaceDeriv} and \eqref{EqIDLocTime},
\begin{displaymath}
\dfrac{d}{d x}
\Psi^{B}_{U_{0}}\circ S_{0}(x)=
L_{0}(x)^{-2}(1-2\Lambda_{U_{0}}(S_{0}(x)))=L_{T_{0}}(x)^{-2}.
\end{displaymath}
Thus, 
\begin{displaymath}
(\Psi^{B}_{U_{0}}\circ S_{0})^{-1}
(y+B_{U_{0}})=S_{T_{0}}^{-1}(y).
\end{displaymath}
Finally,
\begin{displaymath}
u(T_{0}+t)-U_{0}=\int_{T_{0}}^{T_{0}+t}L_{s}(X_{s})^{-3} ds.
\end{displaymath}
So we get (5).

(5): This follows from the recurrence of $(\xi_{u})_{u\geq 0}$.

(6): Since we have the Markov property, it is enough to show it for
$t_{0}=0$. Then, if $X_{0}\in(x_{1},x_{2})$,
\begin{eqnarray*}
\mathbb{P}(X_{t}~\text{hits}~x_{2}~
\text{before}~x_{1})&=&
\mathbb{P}(\xi_{u}~\text{hits}~S_{0}(x_{2})~
\text{before}~S_{0}(x_{1}))
\\&=&
\mathbb{P}(B_{u}~\text{meets}~\Psi^{B}_{u}\circ S_{0}(x_{2})~
\text{before}~\Psi^{B}_{u}\circ S_{0}(x_{1})),
\end{eqnarray*}
which is exactly \eqref{EqUupdown}.

(7): The proof is similar to that of (2) and (3) in Proposition \ref{PropAdditional}. One has to apply a time-change to go from
$(\xi_{u})_{u\geq 0}$ to
$(X_{t})_{t\geq 0}$, and thus, considers
$(B_{u})_{u\geq 0}$ at random stopping times rather than at fixed times.
Note that, in the time change \eqref{EqTimeChange},
\begin{displaymath}
L_{t}(X_{t}) dt = 
\Big(\dfrac{d}{dx}S_{0}(X_{t})\Big)^{-2}
(1-2\Lambda_{u}(\xi_{u}))^{-2} du.
\qedhere
\end{displaymath}
\end{proof}

\begin{rem}
\label{RemName}
The equivalence between \eqref{EqWhichHitFirst1} and \eqref{EqWhichHitFirst2} emphasizes the reinforcement property. Indeed, the motion tends to drift towards the places it has already visited a lot. Yet it is recurrent. Property (8) gives a decomposition of $(X_{t})_{t\geq 0}$ as a local martingale plus an adapted process with zero quadratic variation. As for $(\xi_{u})_{u\geq 0}$, we believe that the LRM $(X_{t})_{t\geq 0}$ is not a semi-martingale.
\end{rem}

\section{The VRJP-related random environment and its convergence}
\label{SecEnvir}

It was shown in \cite{SabotTarres2015VRJPSSHSM} that on general electrical networks the VRJP has the same law as a time-change of a non-reinforced Markov jump process in an environment with random conductances. 
This is stated and proved in full generality in 
Theorem 2 in \cite{SabotTarres2015VRJPSSHSM}.
One can also find the expression of the mixing measure
in Theorem 2 in \cite{SabotTarresZeng2017RandomSchrod}.
Here we will only give a statement in our one-dimensional setting, which is simpler.
In dimension one, the expression of the mixing measure has been already given in Theorem 1.1 in \cite{DavisVolkov2002VRJP}.

\begin{prop}[Davis-Volkov \cite{DavisVolkov2002VRJP},
Sabot-Tarrès \cite{SabotTarres2015VRJPSSHSM}]
\label{PropMelange}

Let $n\in\N$. 
Denote $\N^{\ast}=\N\setminus\{ 0\}$.
Let
$(V^{(n)-}(x))_{x\in 2^{-n}\N^{\ast}}$ and 
$(V^{(n)+}(x))_{x\in 2^{-n}\N^{\ast}}$ be two independent families of independent real random variables, where $V^{(n)\sigma}(x)$,
$\sigma\in\lbrace-1,+1\rbrace$, is distributed according to
\begin{displaymath}
2^{\frac{n}{2}-1}\pi^{-\frac{1}{2}}
(L_{0}(\sigma x)L_{0}(\sigma (x-2^{-n})))^{\frac{1}{2}}
\exp\left(-2^{n}L_{0}(\sigma x)L_{0}(\sigma (x-2^{-n}))
\sinh(v/2)^{2}+v/2\right)dv.
\end{displaymath}
Define $(\U^{(n)-}(x))_{x\in 2^{-n}\N}$ and 
$(\U^{(n)+}(x))_{x\in 2^{-n}\N}$ by
\begin{displaymath}
\U^{(n)-}(0)=\U^{(n)+}(0)=0,
\qquad
\U^{(n)\sigma}(x)=\sum_{i=1}^{2^{n} x}
V^{(n)\sigma}(2^{-n}i),
\sigma\in\lbrace -1,+1\rbrace, x\in\N^{\ast}.
\end{displaymath}
Set
\begin{equation}
\label{EqUn}
\U^{(n)}(x)=
\left\lbrace
\begin{array}{ll}
0 & \text{if}~x=0, \\ 
\U^{(n)+}(x) & \text{if}~x\in 2^{-n}\N^{\ast}, \\ 
\U^{(n)-}(\vert x\vert) & \text{if}~
-x\in 2^{-n}\N^{\ast}.
\end{array} 
\right.
\end{equation}
Let $(Z_{q}^{(n)})_{0\leq q\leq q^{(n)}_{\rm max}}$ be the continuous-time process on $2^{-n}\Z$, which, conditional on the random environment
$(\U^{(n)}(x))_{x\in 2^{-n}\Z}$, is a (non-reinforced) Markov jump process, started from $0$, with transition rate from $x\in 2^{-n}\Z$ to
$x+\sigma 2^{-n}$, $\sigma\in\lbrace -1,+1\rbrace$, equal to
\begin{equation}
\label{EqRateU}
2^{2n-1}\dfrac{L_{0}(x+\sigma 2^{-n})}{L_{0}(x)}
e^{-\U^{(n)}(x+\sigma 2^{-n})+\U^{(n)}(x)}.
\end{equation}
$q^{(n)}_{\rm max}\in (0,+\infty)$ is the time when the process explodes to infinity, whenever this happens. Otherwise
$q^{(n)}_{\rm max}=+\infty$.
Let $\lambda^{(n)}_{q}(x)$ be the local times of 
$Z_{q}^{(n)}$:
\begin{displaymath}
\lambda^{(n)}_{q}(x)=2^{n}\int_{0}^{q}\1_{Z_{r}^{(n)}=x} dr.
\end{displaymath}
Define the change of time
\begin{displaymath}
q^{(n)}(t)=\inf\Big\lbrace q\geq 0\Big\vert 2^{n}
\sum_{x\in 2^{-n}\Z}
\big((L_{0}(x)^{2}+2\lambda^{(n)}_{q}(x))^{\frac{1}{2}}-L_{0}(x)\big)
\geq t
\Big\rbrace.
\end{displaymath}
Then the family of time changed processes
\begin{displaymath}
\Big(Z^{(n)}_{q^{(n)}(t)\wedge q^{(n)}_{\rm max}},
(L_{0}(x)^{2}+
2\lambda^{(n)}_{q^{(n)}(t)\wedge q^{(n)}_{\rm max}}(x))^{\frac{1}{2}}\Big)
_{x\in 2^{-n}\Z, t\geq 0}
\end{displaymath}
has same distribution as the VRJP
\begin{displaymath}
(X^{(n)}_{t\wedge t^{(n)}_{\rm max}},
L^{(n)}_{t\wedge t^{(n)}_{\rm max}}(x))
_{x\in 2^{-n}\Z, t\geq 0}.
\end{displaymath}
\end{prop}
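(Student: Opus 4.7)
The plan is to derive Proposition \ref{PropMelange} as a direct specialisation of the general mixture representation of the VRJP on finite electrical networks established in Theorem 2 of \cite{SabotTarres2015VRJPSSHSM}, whose explicit mixing density is given in Theorem 2 of \cite{SabotTarresZeng2017RandomSchrod}; the dimension-one case was originally obtained in Theorem 1.1 of \cite{DavisVolkov2002VRJP}. The crucial simplification in dimension one is that the underlying graph is a tree, so the joint law of the mixing potential factorises into independent edge increments.

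I would first work on a finite truncated segment $\{-N,\dots,N\}\cdot 2^{-n}$ and apply the general representation, which yields an explicit mixing density on a potential $U$ normalised by $U(0)=0$, of the form
\begin{equation*}
Z_{N}\,\prod_{\{x,y\}\in E}e^{-C_{xy}\phi_{x}\phi_{y}(\cosh(U(y)-U(x))-1)}
\cdot \prod_{x\neq 0}e^{U(x)}\cdot \sqrt{D(U)}\,\prod_{x\neq 0}dU(x),
\end{equation*}
where $D(U)$ is a determinantal factor coming from the Green function of the associated random Schrödinger operator. On a tree, this determinant decouples edge-by-edge by standard matrix-tree/cofactor identities, so that after the change of variables from $(U(x))_{x\neq 0}$ to the edge increments $V^{(n)\sigma}(x)=U(\sigma x)-U(\sigma(x-2^{-n}))$, the density splits as an independent product over edges. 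Each factor is, after using $\cosh(v)-1 = 2\sinh^{2}(v/2)$ and absorbing the vertex weights $e^{U(\cdot)}$ into a linear drift $v/2$ in the increment, of the form
\begin{equation*}
\mathcal{Z}_{xy}^{-1}\,\exp\!\big(-2C_{xy}\phi_{x}\phi_{y}\sinh^{2}(v/2)+v/2\big)\,dv,
\end{equation*}
with normalising constant computed directly from
$\int_{0}^{\infty}e^{-a(w^{2}+1/w^{2})}\,dw=\tfrac{1}{2}\sqrt{\pi/a}\,e^{-2a}$
via the substitution $w=e^{v/2}$. Substituting the present scalings gives the density announced in the proposition.

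I would then translate the jump rates and the time change. The general representation gives rates of the form $\tfrac{1}{2}C_{xy}e^{U(y)-U(x)}$; the affine reparametrisation $\U^{(n)}(x)=U(x)-\log(L_{0}(x)/L_{0}(0))$ turns these into the rates \eqref{EqRateU}, with the ratio $L_{0}(y)/L_{0}(x)$ appearing explicitly. The consistency identity of \cite{SabotTarres2015VRJPSSHSM} between the VRJP local time $L^{(n)}_{t}(x)$ and the rescaled local time $\lambda^{(n)}_{q}(x)$ of the time-changed jump process becomes, in the present scaling, $L^{(n)}_{t}(x)^{2}=L_{0}(x)^{2}+2\lambda^{(n)}_{q}(x)$, from which one recovers the explicit formula for $q^{(n)}(t)$ by summing the pointwise infinitesimal time change against counting measure on $2^{-n}\Z$. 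Passing to $N\to\infty$ is routine since the finite-segment mixing laws are consistent under restriction, giving the independent product structure of \eqref{EqUn}.

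The main obstacle is the bookkeeping of constants across several different conventions: the precise form of the determinantal factor $D(U)$ in \cite{SabotTarresZeng2017RandomSchrod}, the normalisation of the potential (here $\U^{(n)}$ absorbs the $\log L_{0}$ drift, while the reference potential has $U(0)=0$), and the factor-of-two conventions in the VRJP jump rates all need to be tracked carefully to recover the exact numerical constants $2^{n/2-1}\pi^{-1/2}$ and $2^{n}$ in the edge density, and the specific form of the time change $q^{(n)}(t)$. The non-explosion behaviour encoded in $q^{(n)}_{\rm max}$ then comes for free, since on each finite truncated segment the time change is a bijection between VRJP time and jump-process time.
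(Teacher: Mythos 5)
Your proposal follows essentially the same route the paper sketches in the remark after Proposition \ref{PropMelange}: specialize Theorem 2 of \cite{SabotTarres2015VRJPSSHSM} to a truncated one-dimensional segment, use the tree structure to factor the spanning-tree/determinantal weight edge by edge, change variables to edge increments, and read off the explicit edge densities, rates, and time change in the $N\to\infty$ limit. One small slip in the bookkeeping you flagged: with Sabot--Tarrès rates $\propto e^{U(y)-U(x)}$ and the target rate \eqref{EqRateU} proportional to $(L_0(y)/L_0(x))\,e^{-\U^{(n)}(y)+\U^{(n)}(x)}$, the correct reparametrisation is $\U^{(n)}(x)=-U(x)+\log(L_0(x)/L_0(0))$ (indeed the paper's $\bar{\U}^{(n)}_A$ carries the opposite sign to $\U^{(n)}$), not $U(x)-\log(L_0(x)/L_0(0))$ as you wrote.
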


\begin{rem}
Theorem 2 in \cite{SabotTarres2015VRJPSSHSM} is given for finite graphs. To reduce to it, one has to take $A>0$ and consider only 
$x\in I_{A}^{(n)}:=2^{-n}\Z\cap [-A,A]$. The law corresponding to 
Theorem 2 in \cite{SabotTarres2015VRJPSSHSM} is for the random variables
\begin{displaymath}
\bar{\mathcal{U}}^{(n)}_{A}(x)=
-\mathcal{U}^{(n)}(x)
+\dfrac{1}{\operatorname{Card} I_{A}^{(n)}}
\sum_{x'\in I_{A}^{(n)}}
\mathcal{U}^{(n)}(x'),
\qquad x\in I_{A}^{(n)},
\end{displaymath}
so that
\begin{displaymath}
\sum_{x\in I_{A}^{(n)}}
\bar{\mathcal{U}}^{(n)}_{A}(x)=0.
\end{displaymath}
The density of the random vector
$(\bar{\mathcal{U}}^{(n)}_{A}(x))
_{x\in I_{A}^{(n)}}$
on the subspace
$$\Big\{ (\bar{u}(x))_{x\in I_{A}^{(n)}}
\in\R^{I_{A}^{(n)}}
\Big\vert \sum_{x\in I_{A}^{(n)}}
\bar{u}(x)=0\Big\}$$
is given by
\begin{multline*}
e^{\bar{u}(0)-\frac{1}{2}\bar{u}(\min I_{A}^{(n)})
-\frac{1}{2}\bar{u}(\max I_{A}^{(n)})}
~~
\smashoperator{
\prod_{\substack{
x\in 2^{-n}\Z\\
-A\leq x-2^{-n}\leq x\leq A
}}}
~~
2^{\frac{n}{2}-1}\pi^{-\frac{1}{2}}
(L_{0}(x)L_{0}(x-2^{-n}))^{\frac{1}{2}}
\\\times
\exp\big( 
-2^{n}L_{0}(x)L_{0}(x-2^{-n})\sinh((\bar{u}(x)-\bar{u}(x-2^{-n}))/2)^{2}
\big)
\\=
(2\pi)^{-N}
e^{\bar{u}(0)}
D^{(n)}_{A}(L_{0},\bar{u})^{\frac{1}{2}}
~~\smashoperator{
\prod_{\substack{
x\in 2^{-n}\Z\\
-A\leq x-2^{-n}\leq x\leq A
}}}~~
\exp\big( 
-2^{n}L_{0}(x)L_{0}(x-2^{-n})\sinh((\bar{u}(x)-\bar{u}(x-2^{-n}))/2)^{2}
\big),
\end{multline*}
where $2N+1=\operatorname{Card} I_{A}^{(n)}$ and
\begin{eqnarray*}
D^{(n)}_{A}(L_{0},\bar{u})&=&
~~
e^{-\bar{u}(\min I_{A}^{(n)})-\bar{u}(\max I_{A}^{(n)})}
\smashoperator{
\prod_{\substack{
x\in 2^{-n}\Z\\
-A\leq x-2^{-n}\leq x\leq A
}}}
~~
2^{n-1}
L_{0}(x)L_{0}(x-2^{-n})
\\&=&
\smashoperator{
\prod_{\substack{
x\in 2^{-n}\Z\\
-A\leq x-2^{-n}\leq x\leq A
}}}
~~
2^{n-1}
L_{0}(x)L_{0}(x-2^{-n})
e^{\bar{u}(x)+\bar{u}(x-2^{-n})}.
\end{eqnarray*}
As explained in
\cite{SabotTarres2015VRJPSSHSM} in the \textit{Nota Bene} (2) just below the statement of Theorem 2, the factor
$D^{(n)}_{A}(L_{0},\bar{u})$ in case of general electrical networks is replaced by a partition function on spanning trees with weighted edges. Here in the one-dimensional setting there is only one spanning tree including all the edges, and the weight of an edge $\{x-2^{n},x\}$ is
$$2^{n-1}L_{0}(x)L_{0}(x-2^{-n})
e^{\bar{u}(x)+\bar{u}(x-2^{-n})}.$$
\end{rem}
\begin{rem}
The random variable
$\exp(-V^{(n)\sigma}(x))$ follows an inverse Gaussian distribution with density
\begin{displaymath}
\1_{z>0}\dfrac{2^{\frac{n-1}{2}}
(L_{0}(\sigma x)L_{0}(\sigma (x-2^{-n})))^{\frac{1}{2}}
}
{(2\pi z^{3})^{\frac{1}{2}}}
\exp\Big(-2^{n-2}L_{0}(\sigma x)L_{0}(\sigma (x-2^{-n})
\big(z^{\frac{1}{2}}
-z^{-\frac{1}{2}}\big)^{2}\Big).
\end{displaymath}
The distribution that appears in 
Theorem 1.1 in \cite{DavisVolkov2002VRJP} is also an inverse Gaussian, and it is given for random variables corresponding to 
$\exp(-V^{(n)\sigma}(x))$. However, the parameters differ, as the VRJP there is parametrized differently.
\end{rem}

We will show that the random environment
$(\U^{(n)}(x))_{x\in 2^{-n}\Z}$ converges as $n\to +\infty$ to the process $(\U(x))_{x\in\R}$ introduced
in \eqref{EqU}. Out of this we deduce that
the process $(Z^{(n)}_{q})_{0\leq q\leq q^{(n)}_{\rm max}}$ has a limit in law $(Z_{q})_{q\geq 0}$, which condition on the environment
$(\U(x))_{x\in\R}$, is a Markov diffusion on $\R$.
Then, we conclude that the VRJP 
$(X^{(n)}_{t})_{0\leq t\leq t^{(n)}_{\rm max}}$ converges to a time-change of $(Z_{q})_{q\geq 0}$.

\begin{lemma}
\label{LemConvEnv}
The family of random processes
$(\U^{(n)}(x))_{x\in 2^{-n}\Z}$ defined in
\eqref{EqUn} converges in law,
as $n\to +\infty$, to the process $(\U(x))_{x\in\R}$,
defined by \eqref{EqU}. $\U^{(n)}$ is considered to be interpolated linearly outside $ 2^{-n}\Z$. The space of continuous functions $\R\rightarrow\R$ is considered to be endowed with the topology of uniform convergence on compact intervals.
\end{lemma}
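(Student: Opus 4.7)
The plan is to view $\U^{(n)+}$ and $\U^{(n)-}$ as two independent sums of independent increments $V^{(n)\pm}(2^{-n}i)$ of typical size $2^{-n/2}$, and to apply Donsker's invariance principle for triangular arrays. This will produce a deterministic drift of size $2^{-n}$ per step, whose Riemann sum converges to $|S_{0}|$, together with a centered fluctuation whose aggregate converges to $\sqrt{2}\,W\circ S_{0}$.

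The first step is to obtain sharp asymptotics for the first two moments (and control of higher centered moments) of $V^{(n)\sigma}(x)$. Set $a=a_{n,\sigma,x}=2^{n}L_{0}(\sigma x)L_{0}(\sigma(x-2^{-n}))$ and write the density as $Z^{-1}\tilde\rho(v)e^{v/2}$ with $\tilde\rho(v)=\exp(-a\sinh(v/2)^{2})$ even in $v$. Splitting $e^{v/2}=\cosh(v/2)+\sinh(v/2)$ into even and odd parts kills the symmetric terms, yielding
$$\E[V^{(n)\sigma}(x)]=\frac{\int v\,\tilde\rho(v)\sinh(v/2)\,dv}{\int \tilde\rho(v)\cosh(v/2)\,dv},\qquad \E[(V^{(n)\sigma}(x))^{2}]=\frac{\int v^{2}\tilde\rho(v)\cosh(v/2)\,dv}{\int \tilde\rho(v)\cosh(v/2)\,dv}.$$
Rescaling $v=u/\sqrt{a}$ makes $\tilde\rho$ approximately Gaussian with variance $2/a$, and a Laplace expansion gives $\E[V^{(n)\sigma}(x)]=a^{-1}+O(a^{-2})$, $\mathrm{Var}(V^{(n)\sigma}(x))=2a^{-1}+O(a^{-2})$, with centered $k$-th moment of order $a^{-k/2}$, all uniformly in $x$ over any compact interval (on which $L_{0}$ is bounded away from $0$ and $\infty$ by continuity and positivity).

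The second step is to separate drift and fluctuation on each side. On the positive side, the drift $\sum_{i=1}^{\lfloor 2^{n}x\rfloor}\E[V^{(n)+}(2^{-n}i)]$ is a Riemann sum for $\int_{0}^{x}L_{0}(r)^{-2}\,dr=S_{0}(x)=|S_{0}(x)|$, converging uniformly on compacts by continuity of $L_{0}$. The centered martingale $M^{(n)+}_{x}=\sum_{i=1}^{\lfloor 2^{n}x\rfloor}(V^{(n)+}(2^{-n}i)-\E[V^{(n)+}(2^{-n}i)])$ has bracket at $x$ converging to $2S_{0}(x)$, and its increments satisfy the Lindeberg condition by the moment bounds above. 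The functional CLT for triangular arrays of independent centered variables then gives $M^{(n)+}\Rightarrow\sqrt{2}\,W^{+}\circ S_{0}$ in $C[0,+\infty)$ for a standard Brownian motion $W^{+}$. The same argument runs on the negative side with an independent $W^{-}$. Recombining drift and fluctuation, and using $|S_{0}(x)|=\operatorname{sgn}(x)S_{0}(x)$, we obtain $\U^{(n)}(x)\Rightarrow\sqrt{2}\,W\circ S_{0}(x)+|S_{0}(x)|=\U(x)$; tightness of the piecewise-linear interpolants under uniform convergence on compacts follows from Kolmogorov's criterion applied to increments, whose moments decouple by independence and are controlled via Step 1.

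The main technical obstacle is Step 1: pushing the Laplace expansion far enough to extract both leading terms with uniform remainders over a compact $x$-range, from which the Lindeberg and Kolmogorov conditions then follow mechanically. This is nevertheless an elementary saddle-point computation on a one-dimensional integral, after which the remainder of the argument is standard invariance-principle material.
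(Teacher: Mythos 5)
Your proposal is correct and takes essentially the same route as the paper: isolate the drift contribution (a Riemann sum converging to $|S_{0}|$), center the increments to form a martingale whose bracket converges to $2S_{0}$, and apply a functional central limit theorem. The main technical step in both cases is identifying the first two moments of $V^{(n)\sigma}(x)$ asymptotically, namely $\E[V^{(n)\sigma}]\sim a^{-1}$ and $\mathrm{Var}(V^{(n)\sigma})\sim 2a^{-1}$ with $a=2^{n}L_{0}(\sigma x)L_{0}(\sigma(x-2^{-n}))$. The only divergence is in how the technical conditions of the invariance principle are verified: the paper exploits the inequality $\sinh(a)^{2}\geq a^{2}$ to dominate the rescaled density by a standard Gaussian density uniformly in the parameter, and then verifies the Ethier--Kurtz martingale FCLT condition $\E[\sup_{0\leq x\leq A}(\Delta\mathcal M^{(n)}(x))^{2}]\to 0$ via the resulting Gaussian tail bound; you instead propose a Laplace expansion to bound all centered moments (order $a^{-k/2}$), and then verify Lindeberg plus Kolmogorov tightness. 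Both work, and both bottom out in the same saddle-point analysis of the one-dimensional density; the paper's Gaussian domination gives the uniformity for free and is marginally more economical, while your even/odd splitting of the density is a clean way to reduce the moment calculation to explicit one-dimensional integrals.
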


\begin{proof}
For $K>0$, let $\mathsf{V}_{K}$ be a random variable with density
\begin{displaymath}
\dfrac{K^{\frac{1}{2}}}{(2\pi)^{\frac{1}{2}}}
\exp(-K\sinh(v/2)^{2} + v/2).
\end{displaymath}
Then $(K/2)^{\frac{1}{2}}(\mathsf{V}_{K}-K^{-1})$ converges in law as
$K\to +\infty$ to a standard centered Gaussian 
$\mathcal{N}(0,1)$. Indeed, the density of
$(K/2)^{\frac{1}{2}}(\mathsf{V}_{K}-K^{-1})$ is
\begin{displaymath}
\dfrac{1}{(2\pi)^{\frac{1}{2}}}
\exp\big(-K\sinh((2K)^{-\frac{1}{2}}z+(2K)^{-1})^{2}
+((2K)^{-\frac{1}{2}}z+(2K)^{-1})\big),
\end{displaymath}
which converges to the density of $\mathcal{N}(0,1)$. Moreover,
\begin{equation}
\label{EqUpBound}
\dfrac{1}{(2\pi)^{\frac{1}{2}}}
\exp\big(-K\sinh((2K)^{-\frac{1}{2}}z+(2K)^{-1})^{2}
+((2K)^{-\frac{1}{2}}z+(2K)^{-1})\big)
\leq
\dfrac{e^{(4K)^{-1}}}{(2\pi)^{\frac{1}{2}}}\exp(-z^{2}/2),
\end{equation}
where we just used that $\sinh(a)^{2}\geq a^{2}$.
Thus we have a stronger convergence. For any $f$ non-negative measurable function $\R \rightarrow\R$, such that $f$ is integrable for
$\mathcal{N}(0,1)$,
\begin{displaymath}
\E\big[f\big((K/2)^{\frac{1}{2}}(\mathsf{V}_{K}-K^{-1})\big)\big]
\leq \dfrac{e^{(4K)^{-1}}}{(2\pi)^{\frac{1}{2}}}
\int_{\R} f(z) e^{-z^{2}/2} dz,
\end{displaymath}
and by dominated convergence,
\begin{displaymath}
\lim_{K\to +\infty}
\E\big[f\big((K/2)^{\frac{1}{2}}(\mathsf{V}_{K}-K^{-1})\big)\big]
=\dfrac{1}{(2\pi)^{\frac{1}{2}}}
\int_{\R} f(z) e^{-z^{2}/2} dz.
\end{displaymath}
In particular, 
\begin{equation}
\label{EqMeanVar}
\E[\mathsf{V}_{K}] = K^{-1} + o(K^{-1}),
\qquad
\operatorname{Var}(\mathsf{V}_{K})=
2K^{-1} + o(K^{-1}).
\end{equation}
Let $\bar{\Phi}$ be the survival function of $\mathcal{N}(0,1)$:
\begin{equation}
\label{EqUpBound2}
\bar{\Phi}(A)=
\dfrac{1}{(2\pi)^{\frac{1}{2}}}\int_{A}^{+\infty} e^{-z^{2}/2} dz
\stackrel{A>>1}{=}O(e^{-A^{2}/2}).
\end{equation}
Then, by \eqref{EqUpBound},
\begin{equation}
\label{EqUpBound3}
\mathbb{P}(\vert \mathsf{V}_{K}-K^{-1}\vert\geq v)
\leq e^{(4K)^{-1}}\bar{\Phi}((K/2)^{\frac{1}{2}}v).
\end{equation}

Let be
\begin{displaymath}
\mathcal{E}^{(n)}(x)=
\E[\U^{(n)}(2^{-n}\lfloor 2^{n}x\rfloor)], \qquad
\mathcal{M}^{(n)}(x)=
\U^{(n)}(2^{-n}\lfloor 2^{n}x\rfloor)-\mathcal{E}^{(n)}(x),
\end{displaymath}
\begin{displaymath}
\mathcal{A}^{(n)}(x)=
\operatorname{Var}(\U^{(n)}(2^{-n}\lfloor 2^{n}x\rfloor)).
\end{displaymath}
$(\mathcal{M}^{(n)}(x))_{x\geq 0}$ and
$(\mathcal{M}^{(n)}(x)^{2}-\mathcal{A}^{(n)}(x))_{x\geq 0}$ are martingales.
The identities \eqref{EqMeanVar} implies that
$(\mathcal{E}^{(n)}(x))_{x\geq 0}$ and 
$(\mathcal{A}^{(n)}(x)/2)_{x\geq 0}$
converge to 
$(S_{0}(x))_{x\geq 0}$ uniformly on compact subsets. To conclude that
$(\mathcal{M}^{(n)}(x))_{x\geq 0}$ converges in law to
$(\sqrt{2}W\circ S_{0}(x))_{x\geq 0}$, and thus
$(\U^{(n)}(x))_{x\geq 0}$ converges in law to
$(\sqrt{2}W\circ S_{0}(x) + S_{0}(x))_{x\geq 0}$, we can apply a martingale functional Central Limit Theorem, the Theorem 1.4, Section 7.1 in
\cite{EthierKurtz1986Markov}. For this we need additionally to check that
for any $A>0$,
\begin{displaymath}
\lim_{n\to +\infty}
\E[\sup_{0\leq x\leq A}(\mathcal{M}^{(n)}(x)
-\mathcal{M}^{(n)}(x^{-}))^{2}]=0.
\end{displaymath}
Note that
\begin{eqnarray*}
\lim_{n\to +\infty}&\E &[\sup_{0\leq x\leq A}(\mathcal{M}^{(n)}(x)
-\mathcal{M}^{(n)}(x^{-}))^{2}]\\
&=&
\lim_{n\to +\infty}
\E[\sup_{x\in 2^{-n}\Z\cap [0,A]}(V^{(n)+}(x)
-\E[V^{(n)+}(x)])^{2}]\\
&=&
\lim_{n\to +\infty}
\E[\sup_{x\in 2^{-n}\Z\cap [0,A]}(V^{(n)+}(x)
-2^{-n}L_{0}(x)^{-1}L_{0}(x-2^{-n})^{-1})^{2}]\\
&=&
\lim_{n\to +\infty}
\int_{0}^{+\infty}
\mathbb{P}\big(
\sup_{x\in 2^{-n}\Z\cap [0,A]}\vert 
V^{(n)+}(x)
-2^{-n}L_{0}(x)^{-1}L_{0}(x-2^{-n})^{-1}\vert \geq v^{\frac{1}{2}}
\big) dv
\\
&=&
\lim_{n\to +\infty}
\int_{0}^{+\infty}
\big(1-\prod_{\substack{x\in 2^{-n}\Z\\\cap [0,A]}}
\big(1-
\mathbb{P}(
\vert V^{(n)+}(x)
-2^{-n}L_{0}(x)^{-1}L_{0}(x-2^{-n})^{-1}\vert \geq v^{\frac{1}{2}}
)
\big)\big)dv,
\end{eqnarray*}
and with \eqref{EqUpBound3} and \eqref{EqUpBound2} we get that
\begin{displaymath}
\lim_{n\to +\infty}
\int_{0}^{+\infty}
\big(1-\prod_{\substack{x\in 2^{-n}\Z\\\cap [0,A]}}
\big(1-
\mathbb{P}(
\vert V^{(n)+}(x)
-2^{-n}L_{0}(x)^{-1}L_{0}(x-2^{-n})^{-1}\vert \geq v^{\frac{1}{2}}
)
\big)\big)dv=0.
\end{displaymath}

The case $x\leq 0$ is similar.
\end{proof}

Recall that $(\lambda_{q}(x))_{x\in\R, q\geq 0}$ denotes the family of local times of $(Z_{q})_{q\geq 0}$.

\begin{prop}
\label{PropConvZq}
Consider the random environments $(\U^{(n)}(x))_{x\in 2^{-n}\Z}$
and the random processes $(Z^{(n)}_{q})_{0\leq q\leq q^{(n)}_{\rm max}}$,
with local times 
$(\lambda^{(n)}_{q}(x))_{x\in 2^{-n}\Z, 0\leq q\leq q^{(n)}_{\rm max}}$,
introduced in Proposition \ref{PropMelange}. As $n\to +\infty$,
$q^{(n)}_{\rm max}\to +\infty$ in probability, and the process
\begin{displaymath}
(\U^{(n)}(x),Z^{(n)}_{q\wedge q^{(n)}_{\rm max}}, 
\lambda^{(n)}_{q\wedge q^{(n)}_{\rm max}}(x))_{x\in 2^{-n}\Z, q\geq 0}
\end{displaymath}
converges in law to
\begin{displaymath}
(\U(x),Z_{q}(x),\lambda_{q}(x))_{x\in\R, q\geq 0}.
\end{displaymath}
We interpolate $2^{-n}\Z$-valued processes linearly, and use for
$\lambda^{(n)}_{q\wedge q^{(n)}_{\rm max}}(x)$ and
$\lambda_{q}(x)$ the topology of uniform convergence on compact subsets of $\R\times[0,+\infty)$.
\end{prop}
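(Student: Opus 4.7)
The plan is to apply Skorokhod's representation theorem to the convergence in Lemma \ref{LemConvEnv}, so that on a common probability space $\U^{(n)}\to \U$ almost surely, uniformly on compact subsets of $\R$. Under this coupling, I would show almost-sure convergence of the quenched processes $Z^{(n)}$ and their local times to the quenched diffusion $Z$ and its local times, from which non-explosion ($q^{(n)}_{\max}\to+\infty$ in probability) will follow.

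The main tool is the natural scale and speed representation. Solving the recurrence arising from the martingale condition for $\mathcal{S}^{(n)}(Z^{(n)})$ with rates \eqref{EqRateU} gives the explicit discrete scale function
\[
\mathcal{S}^{(n)}(k\,2^{-n}) - \mathcal{S}^{(n)}(0) = \sum_{j=0}^{k-1} 2^{-n}\,L_{0}(j\,2^{-n})^{-1}\,L_{0}((j+1)\,2^{-n})^{-1}\,e^{\U^{(n)}(j\,2^{-n}) + \U^{(n)}((j+1)\,2^{-n})}
\]
for $k\geq 1$, with the symmetric formula for $k<0$. By the uniform convergence of $\U^{(n)}$ and continuity of $L_{0}$, this Riemann sum converges uniformly on compacts a.s.\ to $\mathcal{S}$ of \eqref{EqNatScal}. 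A parallel calculation shows that the discrete speed measure $m^{(n)}(\{k\,2^{-n}\})\approx 2\cdot 2^{-n}\,L_{0}(k\,2^{-n})^{2}\,e^{-2\U^{(n)}(k\,2^{-n})}$ converges weakly to $m(dx)=2L_{0}(x)^{2}e^{-2\U(x)}\,dx$, which is the speed measure of $Z$.

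To identify the limit, I would look at $M^{(n)}_q := \mathcal{S}^{(n)}(Z^{(n)}_q)$, a pure-jump martingale on the grid $\mathcal{S}^{(n)}(2^{-n}\Z)$ whose jump sizes vanish uniformly and whose predictable quadratic variation should converge to that of the continuous martingale $\mathcal{S}(Z)$. A martingale functional CLT, in the spirit of the one invoked in the proof of Lemma \ref{LemConvEnv}, then yields $M^{(n)}\to \mathcal{S}(Z)$ in law, and composing with $(\mathcal{S}^{(n)})^{-1}\to \mathcal{S}^{-1}$ gives convergence of the processes themselves. Because $\mathcal{S}(\pm\infty)=\pm\infty$ a.s., the limit $Z$ does not explode, which, together with the uniform convergence of the scale functions, forces $q^{(n)}_{\max}\to+\infty$ in probability. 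Convergence of the local times $\lambda^{(n)}_q(x)\to\lambda_q(x)$, uniformly on compacts of $\R\times[0,+\infty)$, can then be obtained by expressing $\lambda^{(n)}_q$ as a Radon--Nikodym derivative of the occupation-time measure of $Z^{(n)}$ with respect to $m^{(n)}$ and invoking the standard Hölder regularity of the limit local times.

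The main obstacle is that the exponents appearing in the rates \eqref{EqRateU} are of order $2^{-n/2}$, so the rates themselves fluctuate violently and do not converge in any classical sense. The natural-scale/speed decomposition is precisely what tames this, absorbing these fluctuations into the well-behaved Riemann sums $\mathcal{S}^{(n)}$ and $m^{(n)}$ of $L_{0}^{\pm 2}e^{\pm 2\U^{(n)}}$. The delicate technical step is verifying that the predictable quadratic variation $\langle M^{(n)}\rangle_q$ converges in probability to $\int_0^q(\mathcal{S}'\circ\mathcal{S}^{-1}(M_r))^2\,dr$, i.e.\ that the conditional Gaussian-like fluctuations of the increments $V^{(n)\sigma}$ average out and reproduce the correct limiting speed.
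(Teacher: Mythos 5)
Your proposal shares the first key observation with the paper — the natural scale function $\mathcal{S}^{(n)}$ that absorbs the violent fluctuations of the rates \eqref{EqRateU} — but from there the two arguments diverge. You propose a route via a martingale functional CLT for $M^{(n)}_q = \mathcal{S}^{(n)}(Z^{(n)}_q)$, with convergence of the predictable quadratic variation as the technical crux, followed by a separate argument for the local times (occupation-time Radon--Nikodym derivatives plus regularity of the limit). The paper instead uses the one-dimensional It\^o--McKean representation: it fixes \emph{one} Brownian motion $(\beta_s)_{s\ge 0}$ independent of all environments, and realizes simultaneously $Z^{(n)}_q = (\mathcal{S}^{(n)})^{-1}(\beta_{s^{(n)}(q)})$ and $Z_q = \mathcal{S}^{-1}(\beta_{s(q)})$ as scale- and time-changes of the same $\beta$, where the time change $s^{(n)}(q)$ is the inverse of the additive functional built from the Brownian local time $\ell^\beta$ weighted by the discrete speed $2^{-n} L_0(x)^2 e^{-2\U^{(n)}(x)}$ at the points $\mathcal{S}^{(n)}(x)$. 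The local times then come for free as $\lambda^{(n)}_q(x) = L_0(x)^2 e^{-2\U^{(n)}(x)}\,\ell^\beta_{s^{(n)}(q)}(\mathcal{S}^{(n)}(x))$, and the same formula holds at the limit with $\mathcal{S}$, $\U$.

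The difference is not merely stylistic. In the paper's construction, once $\U^{(n)} \to \U$ (hence $\mathcal{S}^{(n)}\to\mathcal{S}$ and the weights converge) locally uniformly, the convergence of $(Z^{(n)}, \lambda^{(n)})$ to $(Z, \lambda)$ is a consequence of the joint continuity of $(\varsigma,s)\mapsto\ell^\beta_s(\varsigma)$ and elementary properties of inverse functions; no CLT, no tightness estimates for the quadratic variation, no separate tightness argument for the local times. Your plan, by contrast, would need to establish (i) tightness of $(\lambda^{(n)}_q(x))_{x,q}$ as a two-parameter field, which does not follow from convergence in law of $Z^{(n)}$ alone, and (ii) the quadratic-variation convergence you flagged as delicate. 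Both can presumably be pushed through with effort, but the single-$\beta$ embedding removes them entirely and is the reason the paper's proof is short. If you adopt the embedding your quadratic-variation step dissolves, and the local-time step becomes the continuity of $\ell^\beta$; otherwise you should flesh out point (i) before claiming the local-time convergence.
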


\begin{proof}
The idea is to "embed" the processes 
$(Z^{(n)}_{q})_{0\leq q\leq q^{(n)}_{\rm max}}$ for different values of $n$ inside a Brownian motion, scale-changed.
Let $(\beta_{s})_{s\geq 0}$ be a standard Brownian motion started from $0$, with a family of local times denoted
$(\ell^{\beta}_{s}(\varsigma))_{\varsigma\in\R, s\geq 0}$. Take
$(\U^{(n)}(x))_{x\in 2^{-n}\Z}$ independent from
$(\beta_{s})_{s\geq 0}$. Define the change of scale
$\mathcal{S}^{(n)}: 2^{-n}\Z \rightarrow \R$ by
$\mathcal{S}^{(n)}(0)=0$ and for $x\in 2^{-n}\Z$, $x\neq 0$,
$\mathcal{S}^{(n)}(x)$ equal to
\begin{displaymath}
\operatorname{sgn}(x)
2^{-n}\sum_{i=1}^{2^{n}\vert x\vert}
L_{0}(\operatorname{sgn}(x)2^{-n}i)^{-1}
L_{0}(\operatorname{sgn}(x)2^{-n}(i-1))^{-1}
e^{\U^{(n)}(\operatorname{sgn}(x)2^{-n}i)
+\U^{(n)}(\operatorname{sgn}(x)2^{-n}(i-1))}.
\end{displaymath}
Consider the time change
\begin{displaymath}
s^{(n)}(q)=\inf\Big\lbrace s\geq 0 \Big\vert
2^{-n}\sum_{x\in 2^{-n}\Z}
L_{0}(x)^{2}e^{-2\U^{(n)}(x)}
\ell^{\beta}_{s}(\mathcal{S}^{(n)}(x))
\geq q\Big\rbrace.
\end{displaymath}
Conditional on $(\U^{(n)}(x))_{x\in 2^{-n}\Z}$, the time changed Brownian motion
$(\beta_{s^{(n)}(q)})_{q\geq 0}$ is a Markov nearest neighbor jump process on
$\mathcal{S}^{(n)}(2^{-n}\Z)$, and the jump rate from 
$\mathcal{S}^{(n)}(x)$, $x\in 2^{-n}\Z$, to
$\mathcal{S}^{(n)}(x+\sigma 2^{-n})$,
$\sigma\in\{-1,1\}$, equals
\begin{displaymath}
2^{n}L_{0}(x)^{-2}e^{2\U^{(n)}(x)}
\dfrac{1}{2}
\vert \mathcal{S}^{(n)}(x+\sigma 2^{-n})-
\mathcal{S}^{(n)}(x)\vert^{-1}
=
2^{2n-1}\dfrac{L_{0}(x+\sigma 2^{-n})}{L_{0}(x)}
e^{-\U^{(n)}(x+\sigma 2^{-n})+\U^{(n)}(x)},
\end{displaymath}
which is exactly \eqref{EqRateU}.
Then one can construct
$Z^{(n)}_{q}$ and $\lambda^{(n)}_{q}(x)$ as
\begin{displaymath}
Z^{(n)}_{q} = (\mathcal{S}^{(n)})^{-1}(\beta_{s^{(n)}(q)}),
\qquad 
\lambda^{(n)}_{q}(x)=
L_{0}(x)^{2}e^{-2\U^{(n)}(x)}
\ell^{\beta}_{s^{(n)}(q)}(\mathcal{S}^{(n)}(x))
.
\end{displaymath}

Similarly, take $(\U(x))_{x\in\R}$ independent from
$(\beta_{s})_{s\geq 0}$. Consider the change of scale
\begin{displaymath}
\mathcal{S}(x)=\int_{0}^{x}L_{0}(r)^{-2}e^{2\U(r)} dr,
\end{displaymath}
and the change of time
\begin{equation}
\label{Eqsq}
s(q)=\inf\Big\lbrace s\geq 0\Big\vert
\int_{\R}
L_{0}(x)^{2}e^{-2\U(x)}
\ell^{\beta}_{s}(\mathcal{S}(x)) dx
\geq q
\Big\rbrace.
\end{equation}
One can construct
$Z_{q}$ and $\lambda_{q}(x)$ as
\begin{displaymath}
Z_{q} = \mathcal{S}^{-1}(\beta_{s(q)}),
\qquad 
\lambda_{q}(x)=
L_{0}(x)^{2}e^{-2\U(x)}
\ell^{\beta}_{s(q)}(\mathcal{S}(x))
.
\end{displaymath}
The convergence of $\U^{(n)}$ to $\U$
(Lemma \ref{LemConvEnv}) implies then the other convergences.
\end{proof}

Let be the time change
\begin{displaymath}
q(t)=\inf\Big\lbrace q\geq 0\Big\vert 
\int_{x\in \R}
\big((L_{0}(x)^{2}+2\lambda_{q}(x))^{\frac{1}{2}}-L_{0}(x)\big) dx
\geq t
\Big\rbrace.
\end{displaymath}
This is the same time-change as in \eqref{EqTC1}. Set
\begin{displaymath}
X^{\ast}_{t} = Z_{q(t)},
\qquad
L^{\ast}_{t}(x)= (L_{0}(x)^{2}+2\lambda_{q(t)}(x))^{\frac{1}{2}}.
\end{displaymath}

\begin{lemma}
\label{LemTimeChangeLSRM}

The function $t\mapsto q(t)$ is a.s. an increasing  diffeomorphism of
of $[0,+\infty)$. The space-time process
$(L^{\ast}_{t}(x)-L_{0}(x))_{x\in\R, t\geq 0}$ is the family of local times of $(X^{\ast}_{t})_{t\geq 0}$, that is to say for any 
$f$ bounded measurable function,
\begin{displaymath}
\int_{0}^{t} f(X^{\ast}_{s}) ds=
\int_{\R} f(x) (L^{\ast}_{t}(x)-L_{0}(x)) dx.
\end{displaymath}
\end{lemma}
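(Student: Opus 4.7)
My plan is to establish (a), that $q(\cdot)$ is a $\mathcal{C}^1$ diffeomorphism of $[0,+\infty)$, first, and then use it together with a change of variables plus the occupation time formula for $Z$ to derive (b).

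For part (a), I would introduce $\tilde t(q):=\int_\R \big((L_0(x)^2+2\lambda_q(x))^{1/2}-L_0(x)\big)\,dx$, which by construction is the inverse of $q(\cdot)$. The first task is to check $\tilde t(q)<+\infty$ for every $q$: this follows since $\lambda_q$ is a.s. supported on the compact range $\{Z_r:0\leq r\leq q\}$ and $L_0$ is continuous and bounded below on this set. The second task is to show $\tilde t$ is strictly increasing and $\mathcal{C}^1$ in $q$. Here I would rewrite
$$(L_0(x)^2+2\lambda_q(x))^{1/2}-L_0(x)=\int_0^{\lambda_q(x)}(L_0(x)^2+2u)^{-1/2}\,du,$$
differentiate in $q$, and use the classical occupation time formula $\int_0^q f(Z_r)\,dr=\int_\R f(x)\lambda_q(x)\,dx$, equivalently, that the measure $d_q\lambda_q(x)\,dx$ equals $\delta_{Z_q}(dx)\,dq$. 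This yields
$$\frac{d\tilde t}{dq}(q)=\int_\R (L_0(x)^2+2\lambda_q(x))^{-1/2}\,d_q\lambda_q(x)=(L_0(Z_q)^2+2\lambda_q(Z_q))^{-1/2}>0,$$
in agreement with \eqref{EqTC1}. This is continuous in $q$ by joint continuity of $(q,x)\mapsto\lambda_q(x)$ and continuity of $Z$. Finally $\tilde t(q)\to+\infty$ as $q\to+\infty$: since the natural scale $\mathcal{S}$ satisfies $\mathcal{S}(\pm\infty)=\pm\infty$, the time-changed Brownian motion $\mathcal{S}(Z_q)$ is recurrent and so is $Z$, hence $\lambda_q(x)\to+\infty$ for every $x$, and monotone convergence on any compact subset of $\R$ gives the claim.

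For part (b), using that $q(\cdot)$ is a $\mathcal{C}^1$ diffeomorphism with $dq(s)/ds=(L_0(Z_{q(s)})^2+2\lambda_{q(s)}(Z_{q(s)}))^{1/2}$, I would change variables $r=q(s)$:
$$\int_0^t f(X^\ast_s)\,ds=\int_0^{q(t)} f(Z_r)(L_0(Z_r)^2+2\lambda_r(Z_r))^{-1/2}\,dr.$$
Then I would apply the time-dependent occupation formula
$$\int_0^{q(t)}h(r,Z_r)\,dr=\int_\R \int_0^{q(t)}h(r,x)\,d_r\lambda_r(x)\,dx$$
with $h(r,x)=f(x)(L_0(x)^2+2\lambda_r(x))^{-1/2}$. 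For each fixed $x$ the inner Stieltjes integral evaluates, via the substitution $u=\lambda_r(x)$, to
$$\int_0^{\lambda_{q(t)}(x)}(L_0(x)^2+2u)^{-1/2}\,du=(L_0(x)^2+2\lambda_{q(t)}(x))^{1/2}-L_0(x)=L^\ast_t(x)-L_0(x),$$
yielding the required identity.

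The main technical step will be justifying the time-dependent occupation formula above; this extends the classical one by a monotone class/Fubini argument, starting from the base case $h(r,x)=\mathbf{1}_{[a,b]}(r)g(x)$, for which both sides evaluate to $\int_\R g(x)(\lambda_b(x)-\lambda_a(x))\,dx$, and then approximating general bounded measurable $h$ by simple functions of product form. Joint continuity of $(r,x)\mapsto\lambda_r(x)$ ensures the Stieltjes integrals are well-defined simultaneously for all $x$; once this is in hand, everything else is essentially a calculus manipulation.
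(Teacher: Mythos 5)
Your proposal is correct and follows essentially the same route as the paper: differentiate the inverse time change to get $dq(t)/dt=(L_0(Z_{q(t)})^2+2\lambda_{q(t)}(Z_{q(t)}))^{1/2}$, show $q$ ranges over all of $[0,+\infty)$ via the a.s.\ divergence of $\lambda_q(x)$, and then substitute and apply the (time-dependent) occupation time formula to convert the $dr$-integral into the Stieltjes integral $\int_0^{\lambda_{q(t)}(x)}(L_0(x)^2+2u)^{-1/2}\,du$. Your additional remarks on finiteness of $\tilde t(q)$ and on justifying the time-dependent occupation formula by a monotone class argument fill in details that the paper leaves implicit but are not a different approach.
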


\begin{proof}
For the first point, one needs to check that
\begin{displaymath}
\lim_{q\to +\infty}
\int_{x\in \R}
\big((L_{0}(x)^{2}+2\lambda_{q}(x))^{\frac{1}{2}}-L_{0}(x)\big) dx =+\infty.
\end{displaymath}
But actually, a.s. for all $x\in\R$,
$\lim_{q\to +\infty}\lambda_{q}(x)=+\infty$.

If we differentiate the time change $t\mapsto q(t)$, we get
\begin{displaymath}
dt = (L_{0}(Z_{q})^{2}+2\lambda_{q}(Z_{q}))^{-\frac{1}{2}}dq.
\end{displaymath}
Thus,
\begin{eqnarray*}
\int_{0}^{t} f(X^{\ast}_{s}) ds &=&
\int_{0}^{q(t)} f(Z_{r})
(L_{0}(Z_{r})^{2}+2\lambda_{r}(Z_{r}))^{-\frac{1}{2}} dr
\\&=&\int_{\R} \int_{0}^{q(t)}f(x)
(L_{0}(x)^{2}+2\lambda_{r}(x))^{-\frac{1}{2}}
d_{r}\lambda_{r}(x) dx
\\&=&\int_{\R} f(x)((L_{0}(x)^{2}+2\lambda_{q(t)}(x))^{\frac{1}{2}}
- L_{0}(x))dx
\\&=&\int_{\R} f(x) (L^{\ast}_{t}(x)-L_{0}(x)) dx,
\end{eqnarray*}
which is our second point.
\end{proof}

Combing Proposition \ref{PropMelange} and Proposition \ref{PropConvZq}, one immediately gets that the VRJP has a limit in law which is a time change of $(Z_{q})_{q\geq 0}$:

\begin{prop}
\label{ThmConv1}
As $n\to +\infty$, $t^{(n)}_{\rm max}\to +\infty$ in probability, and the VRJP
\begin{displaymath}
(X^{(n)}_{t\wedge t^{(n)}_{\rm max}},
L^{(n)}_{t}(x))_{x\in 2^{-n}\Z, t\geq 0}
\end{displaymath}
converges in law to
\begin{displaymath}
(X^{\ast}_{t},
L^{\ast}_{t}(x))_{x\in \R, t\geq 0},
\end{displaymath}
where we interpolate $L^{(n)}_{t}(x)$ linearly outside
$x\in 2^{-n}\Z$.
\end{prop}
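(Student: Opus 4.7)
The plan is to combine the Sabot-Tarr\`es mixture representation (Proposition \ref{PropMelange}) with the environment convergence (Proposition \ref{PropConvZq}), and then to verify that the discrete time-change $q^{(n)}(t)$ converges to the continuous one $q(t)$ from Lemma \ref{LemTimeChangeLSRM}. By Proposition \ref{PropMelange}, the law of $(X^{(n)}_{t\wedge t^{(n)}_{\max}},L^{(n)}_{t\wedge t^{(n)}_{\max}}(x))$ coincides with that of $(Z^{(n)}_{q^{(n)}(t)\wedge q^{(n)}_{\max}},(L_0(x)^2+2\lambda^{(n)}_{q^{(n)}(t)\wedge q^{(n)}_{\max}}(x))^{1/2})$. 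Invoking Skorokhod's representation theorem together with Proposition \ref{PropConvZq}, I may work on a common probability space on which $(\U^{(n)},Z^{(n)}_{\cdot\wedge q^{(n)}_{\max}},\lambda^{(n)}_{\cdot\wedge q^{(n)}_{\max}})$ converges almost surely to $(\U,Z,\lambda)$, uniformly on compact subsets, and $q^{(n)}_{\max}\to +\infty$ almost surely.

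The core step is then to set
$$F_n(q) := \sum_{x\in 2^{-n}\Z} 2^{-n}\bigl((L_0(x)^2+2\lambda^{(n)}_{q\wedge q^{(n)}_{\max}}(x))^{1/2}-L_0(x)\bigr),\quad F(q) := \int_\R\bigl((L_0(x)^2+2\lambda_q(x))^{1/2}-L_0(x)\bigr)\,dx,$$
so that $q^{(n)}=F_n^{-1}$ and, by Lemma \ref{LemTimeChangeLSRM}, $q=F^{-1}$, and to show that $F_n\to F$ uniformly on every compact interval of $q$. For fixed $Q>0$, the support of $\lambda_Q$ is a.s.\ bounded (since $Z$ is a.s.\ bounded on $[0,Q]$), and the supports of $\lambda^{(n)}_{\cdot\wedge q^{(n)}_{\max}}$ on $[0,Q]$ form a tight family in $n$ thanks to the uniform convergence $Z^{(n)}\to Z$. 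This reduces the Riemann sum and the integral to a common compact spatial window, on which the uniform convergence $\lambda^{(n)}\to\lambda$ in $(x,q)$ yields $\sup_{q\in[0,Q]}|F_n(q)-F(q)|\to 0$. Monotonicity then transfers the convergence to the inverses: $q^{(n)}(t)\to q(t)$ uniformly on compacts of $t$.

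For the claim $t^{(n)}_{\max}\to +\infty$, I would use $t^{(n)}_{\max}=F_n(q^{(n)}_{\max})$ together with the recurrence of $Z$ (giving $\lambda_q(x)\to +\infty$ as $q\to +\infty$ for each $x$, hence $F(q)\to +\infty$) and $q^{(n)}_{\max}\to +\infty$. Finally, composing the uniform convergences of $q^{(n)}(t)$, $Z^{(n)}$ and $\lambda^{(n)}$ yields
$$Z^{(n)}_{q^{(n)}(t)\wedge q^{(n)}_{\max}}\to Z_{q(t)}=X^*_t,\qquad \lambda^{(n)}_{q^{(n)}(t)\wedge q^{(n)}_{\max}}(x)\to\lambda_{q(t)}(x)$$
uniformly on compacts, which gives the stated convergence (the linear interpolation of $L^{(n)}$ does not spoil anything, since $L^*$ is jointly continuous). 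The main obstacle is the tail control of the Riemann sum $F_n(q)$: the integrand is not compactly supported in a deterministic way, and one must justify cutting off both sums and integrals to a common random window. The key input for this is the tightness of the spatial ranges of $Z^{(n)}$ on finite time intervals, which is already contained in Proposition \ref{PropConvZq}.
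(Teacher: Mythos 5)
Your proposal is correct and takes essentially the same approach as the paper: the paper derives Proposition~\ref{ThmConv1} directly as an ``immediate'' consequence of combining Proposition~\ref{PropMelange} (the mixture representation) with Proposition~\ref{PropConvZq} (the environment and embedded-process convergence), and your argument simply spells out the time-change convergence $q^{(n)}(t)\to q(t)$ and the tightness/tail control that the paper leaves implicit.
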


Now let us recall how to obtain an ERRW as a mixture of random walks.
The statement below combines Theorem 1 in \cite{SabotTarres2015VRJPSSHSM},
which relates the ERRW and the VRJP, and Theorem 2 in \cite{SabotTarres2015VRJPSSHSM}, which relates the VRJP and a mixture of random walks.

\begin{prop}[Sabot-Tarrès \cite{SabotTarres2015VRJPSSHSM}]
\label{PropERRWVRJP}
Let $(\gamma(x-2^{-n},x))_{x\in 2^{-n}\Z}$ be independent random variables where $\gamma(x-2^{-n},x)$ has the distribution
$\Gamma(2^{n-1}L_{0}(x-2^{-n})L_{0}(x),1)$. Let
$(\widehat V^{(n)-}(x))_{x\in 2^{-n}\N^{\ast}}$ and 
$(\widehat V^{(n)+}(x))_{x\in 2^{-n}\N^{\ast}}$ be conditionally 
on $(\gamma(x-2^{-n},x))_{x\in 2^{-n}\Z}$
two independent families of independent real random variables, where 
$\widehat V^{(n)\sigma}(x)$,
$\sigma\in\lbrace-1,+1\rbrace$, has conditional distribution
\begin{displaymath}
(2\pi)^{-\frac{1}{2}}
(\gamma(x-\sigma 2^{-n},x))^{\frac{1}{2}}
\exp\left(-2\gamma(x-\sigma 2^{-n},x)
\sinh(v/2)^{2}+v/2\right)dv.
\end{displaymath}
Define $(\widehat\U^{(n)-}(x))_{x\in 2^{-n}\N}$ and 
$(\widehat\U^{(n)+}(x))_{x\in 2^{-n}\N}$ by
\begin{displaymath}
\widehat\U^{(n)-}(0)=\widehat\U^{(n)+}(0)=0,
\qquad
\widehat\U^{(n)\sigma}(x)=\sum_{i=1}^{2^{n} x}
\widehat V^{(n)\sigma}(2^{-n}i),
\sigma\in\lbrace -1,+1\rbrace, x\in\N^{\ast}.
\end{displaymath}
Set
\begin{equation*}
\widehat\U^{(n)}(x)=
\left\lbrace
\begin{array}{ll}
0 & \text{if}~x=0, \\ 
\widehat\U^{(n)+}(x) & \text{if}~x\in
2^{-n}\N^{\ast}, \\ 
\widehat\U^{(n)-}(\vert x\vert) & \text{if}~-x\in
2^{-n}\N^{\ast}.
\end{array} 
\right.
\end{equation*}
Consider the discrete time random walk on
$2^{-n}\Z$, started from $0$, in the random environment
$(\gamma(x-2^{-n},x),\widehat\U^{(n)}(x))_{x\in 2^{-n}\Z}$,
with conditional transition probabilities from $x$ to $x+\sigma 2^{-n}$,
$\sigma\in\{-1,1\}$, proportional to
\begin{displaymath}
\gamma(x,x+\sigma 2^{-n})
e^{-(\widehat\U^{(n)}(x)+\widehat\U^{(n)}(x+\sigma 2^{-n}))}.
\end{displaymath}
Then, averaged by the environment, it
has same distribution as the ERRW 
$(\widehat{Z}^{(n)}_{k})_{k\geq 0}$ of
Proposition \ref{PropConvERRW}.
\end{prop}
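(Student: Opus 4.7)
The statement is billed as due to Sabot--Tarrès, and indeed my plan is to obtain it as a composition of two results of \cite{SabotTarres2015VRJPSSHSM}, tracking parameters on the one-dimensional lattice $2^{-n}\Z$. The two ingredients are (i) the representation of an ERRW as an average over Gamma-distributed random conductances of a VRJP, and (ii) the representation of the VRJP on any finite electrical network as a time-change of a mixture of non-reinforced Markov jump processes driven by an explicit ``supersymmetric hyperbolic'' random potential. Once both are applied one after the other, and the resulting continuous-time jump process is read at its jump times (which suffices since we compare to the discrete-time ERRW), the claim follows.

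Step 1 (ERRW to VRJP in Gamma-random conductances). By Theorem 1 of \cite{SabotTarres2015VRJPSSHSM}, an ERRW with initial edge weights $w_{0}^{(n)}(e)$ and unit initial local times at the vertices has the same law as the discrete-time skeleton of a VRJP whose conductances $\gamma(e)$ are independent with marginal $\Gamma(w_{0}^{(n)}(e),1)$. Plugging $w_{0}^{(n)}(x-2^{-n},x)=2^{n-1}L_{0}(x-2^{-n})L_{0}(x)$ produces exactly the Gamma family of the statement.

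Step 2 (VRJP to mixture of Markov walks). Conditionally on $\gamma$, apply Theorem 2 of \cite{SabotTarres2015VRJPSSHSM} to this VRJP. Because the underlying graph is a path, only one spanning tree enters the partition function in that mixing density, and the density factorises completely over edges, as already displayed in the remark following Proposition \ref{PropMelange} with $C_{e}$ replaced by $\gamma(e)$. The centred potential increments $\widehat V^{(n)\sigma}(x)$ thus become independent with precisely the $\sinh^{2}$ density written in Proposition~\ref{PropERRWVRJP} (the parameter $2\gamma(x-\sigma 2^{-n},x)$ arises from the convention of Theorem 2, matching the factor $2$ in front of $\gamma$). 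Conditional on $\widehat{\U}^{(n)}$, the jump rate of the mixing Markov process from $x$ to $x+\sigma 2^{-n}$ is proportional to $\gamma(x,x+\sigma 2^{-n})\,e^{-\widehat{\U}^{(n)}(x+\sigma 2^{-n})+\widehat{\U}^{(n)}(x)}$. Passing to the embedded discrete-time chain, only ratios of rates matter; multiplying numerator and denominator by $e^{-\widehat{\U}^{(n)}(x)}$ rewrites the transition probability as proportional to $\gamma(x,x+\sigma 2^{-n})\,e^{-(\widehat{\U}^{(n)}(x)+\widehat{\U}^{(n)}(x+\sigma 2^{-n}))}$, exactly as claimed.

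The main point requiring care is that Theorem 2 of \cite{SabotTarres2015VRJPSSHSM} is proved on finite networks, whereas we work on $2^{-n}\Z$. The standard remedy, which the remark following Proposition \ref{PropMelange} already prepares, is to restrict to a window $I_{A}^{(n)}=2^{-n}\Z\cap[-A,A]$, apply the finite-graph identity to the VRJP stopped at exit from $I_{A}^{(n)}$, and then let $A\to\infty$. The explicit factorised form of the mixing density on a path guarantees that the laws of the environment on a fixed finite window are consistent as $A$ grows, hence Kolmogorov's extension theorem produces the infinite-volume environment used above; the fact that the ERRW almost surely stays in $I_{A}^{(n)}$ up to any finite time for $A$ large enough then transfers the distributional identity from the finite to the infinite graph.
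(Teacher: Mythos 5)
Your proposal follows exactly the route the paper itself indicates: it states just before the proposition that the result "combines Theorem 1 in \cite{SabotTarres2015VRJPSSHSM}\dots and Theorem 2 in \cite{SabotTarres2015VRJPSSHSM}," and you make that composition explicit, with the one-dimensional factorisation of the mixing density and the finite-window truncation already prepared by the remark after Proposition~\ref{PropMelange}. One trivial slip: to pass from rates proportional to $\gamma(x,x+\sigma 2^{-n})\,e^{-\widehat\U^{(n)}(x+\sigma 2^{-n})+\widehat\U^{(n)}(x)}$ to transition probabilities proportional to $\gamma(x,x+\sigma 2^{-n})\,e^{-\widehat\U^{(n)}(x)-\widehat\U^{(n)}(x+\sigma 2^{-n})}$ you should multiply numerator and denominator by the $x$-only factor $e^{-2\widehat\U^{(n)}(x)}$, not $e^{-\widehat\U^{(n)}(x)}$; the conclusion is of course unaffected.
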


The following elementary convergence in probability holds.

\begin{lemma}
\label{LemTildeL0}
(1) Let $A>0$. Let $S_{0}$ be the change of scale \eqref{EqS0}, with $x_{0}=0$.
Then
\begin{displaymath}
\sup_{x\in [-A,A]\cap 2^{-n}\Z}
\Big\vert 
\operatorname{sgn}(x)
\dfrac{1}{2}\sum_{i=1}^{2^{n}\vert x\vert}
\gamma(2^{-n}i-\operatorname{sgn}(x)2^{-n},2^{-n}i)^{-1}-S_{0}(x)\Big\vert
\end{displaymath}
converges in probability to $0$ as $n\to +\infty$.

(2) Let $A>0$. We have that
\begin{displaymath}
\sup_{x\in [-A,A]\cap 2^{-n}\Z}
\Big\vert 
\operatorname{sgn}(x)
2^{-2n+1}\sum_{i=1}^{2^{n}\vert x\vert}
\gamma(2^{-n}i-\operatorname{sgn}(x)2^{-n},2^{-n}i)
-\int_{0}^{x}L_{0}(r)^{2} dr\Big\vert
\end{displaymath}
converges in probability to $0$ as $n\to +\infty$.
\end{lemma}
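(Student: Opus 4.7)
The plan is the standard law-of-large-numbers route: compute first and second moments of the partial sums using explicit properties of the Gamma distribution, recognise the means as Riemann sums for the target integrals, control fluctuations by Chebyshev's inequality, and upgrade pointwise convergence at a finite grid to uniform convergence by monotonicity in $x$.

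First recall that for $\gamma\sim\Gamma(\alpha,1)$ with $\alpha>2$, one has $\E[\gamma]=\operatorname{Var}(\gamma)=\alpha$, $\E[\gamma^{-1}]=1/(\alpha-1)$, and $\operatorname{Var}(\gamma^{-1})=1/((\alpha-1)^{2}(\alpha-2))=O(\alpha^{-3})$. In our setting $\gamma(2^{-n}(i-1),2^{-n}i)$ has shape parameter $\alpha_{i}^{(n)}=2^{n-1}L_{0}(2^{-n}(i-1))L_{0}(2^{-n}i)$; by continuity and positivity of $L_{0}$ on $[-A,A]$ one has $\alpha_{i}^{(n)}\asymp 2^{n}$ uniformly in $i$ with $2^{-n}\vert i\vert\leq A$.

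For part~(2), taking expectations gives
\begin{displaymath}
\E\Big[2^{-2n+1}\sum_{i=1}^{2^{n}x}\gamma(2^{-n}(i-1),2^{-n}i)\Big]=2^{-n}\sum_{i=1}^{2^{n}x}L_{0}(2^{-n}(i-1))L_{0}(2^{-n}i),
\end{displaymath}
which by uniform continuity of $L_{0}$ is a Riemann sum converging uniformly in $x\in[-A,A]\cap 2^{-n}\Z$ to $\int_{0}^{x}L_{0}(r)^{2}\,dr$. By independence, the variance of the partial sum equals $2^{-4n+2}\sum_{i=1}^{2^{n}\vert x\vert}\alpha_{i}^{(n)}=O(2^{-2n})$, uniformly in $\vert x\vert\leq A$, so Chebyshev yields pointwise convergence in probability at rate $O(2^{-n})$. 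For part~(1), the analogous computation gives
\begin{displaymath}
\E\Big[\tfrac{1}{2}\sum_{i=1}^{2^{n}x}\gamma(2^{-n}(i-1),2^{-n}i)^{-1}\Big]=\tfrac{1}{2}\sum_{i=1}^{2^{n}x}\frac{1}{\alpha_{i}^{(n)}-1}=2^{-n}\sum_{i=1}^{2^{n}x}\big(L_{0}(2^{-n}(i-1))L_{0}(2^{-n}i)\big)^{-1}\big(1+O(2^{-n})\big),
\end{displaymath}
which is a Riemann sum for $S_{0}(x)$ (the discrepancy between $1/(\alpha-1)$ and $1/\alpha$ contributes an additional $O(2^{-n})$ uniformly), while the variance is bounded by $\tfrac{1}{4}\sum_{i=1}^{2^{n}\vert x\vert}O((\alpha_{i}^{(n)})^{-3})=O(2^{-2n})$.

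To upgrade these pointwise convergences to uniform convergence in probability on $[-A,A]\cap 2^{-n}\Z$, I would observe that the map $x\mapsto\operatorname{sgn}(x)\sum_{i=1}^{2^{n}\vert x\vert}(\cdot)$ is monotone non-decreasing in $x$ (sum of non-negative terms), and so are the continuous limits $S_{0}$ and $\int_{0}^{\cdot}L_{0}^{2}$. Given $\eps>0$, I would cover $[-A,A]$ by finitely many subintervals on which the continuous limit oscillates by at most $\eps$, apply Chebyshev at the $O(\eps^{-1})$ endpoints (each failure event having probability $O(2^{-2n}\eps^{-2})$), and sandwich the random monotone function between its values at the neighbouring grid points. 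This is the classical Glivenko--Cantelli trick, equivalent to Pólya's theorem on pointwise convergence of monotone functions to a continuous limit. The main (mild) obstacle is bookkeeping uniformity of the moment estimates in $\vert x\vert\leq A$, which follows from the uniform bound $\alpha_{i}^{(n)}\asymp 2^{n}$ on the relevant range of $i$; the case $x<0$ is handled symmetrically by monotonicity in $\vert x\vert$.
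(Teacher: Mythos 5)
Your proof is correct, but the uniformity step follows a genuinely different route from the paper. You and the paper share the same moment computations for $\Gamma(\alpha,1)$ and its inverse, and the same observation that the means are Riemann sums converging to the target integrals. The divergence is in how the pointwise variance bound $O(2^{-2n})$ is upgraded to a uniform statement over $[-A,A]\cap 2^{-n}\Z$: the paper treats the centered partial sums as a discrete-time $L^2$-martingale and applies Doob's maximal inequality, obtaining $\E\big[\sup_{x}\vert\cdot\vert^{2}\big]\le 4\sum_{i}\operatorname{Var}(\cdot)=O(2^{-2n})$ in one stroke; you instead exploit the monotonicity of $x\mapsto\operatorname{sgn}(x)\sum_{i\le 2^n\vert x\vert}(\cdot)$ together with continuity and monotonicity of $S_0$ and $\int_0^{\cdot}L_0^2$, covering $[-A,A]$ by $O(\eps^{-1})$ grid points where the limit oscillates by at most $\eps$, applying Chebyshev at each, and sandwiching — the Pólya/Glivenko--Cantelli device. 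Both are sound here. The paper's martingale argument is shorter and immediately gives an $L^2$ bound on the supremum; your route is more elementary (no martingale theory) and has the pedagogical virtue of making explicit which structural feature (monotone, positive increments) saves the day. One small bookkeeping point to be careful about in your version: $\operatorname{Var}(\gamma^{-1})=O(\alpha^{-3})$ requires $\alpha>2$, which holds for $n$ large since $\alpha_i^{(n)}\asymp 2^n$ uniformly on $[-A,A]$ — you note this, and the paper implicitly uses it as well.
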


\begin{proof}
(1): By the elementary properties of gamma distributions, 
\begin{eqnarray*}
\E[\gamma(x-2^{-n},x)^{-1}]=
\dfrac{\Gamma(\cdot - 1)}
{\Gamma(\cdot)}(2^{n-1}L_{0}(x-2^{-n})L_{0}(x))=
(2^{n-1}L_{0}(x-2^{-n})L_{0}(x)-1)^{-1},
\\
\operatorname{Var}(\gamma(x-2^{-n},x)^{-1})=
\left(\dfrac{\Gamma(\cdot - 2)}{\Gamma(\cdot)}-
\dfrac{\Gamma(\cdot - 1)^{2}}{\Gamma(\cdot)^{2}}\right)
(2^{n-1}L_{0}(x-2^{-n})L_{0}(x))
=O(2^{-3n}),
\end{eqnarray*}
where $\Gamma$ is the Euler's Gamma function, and
$\Gamma(\cdot - k)(a)$ stands for $\Gamma(a - k)$, so as to shorten the expressions above.
From Doob's maximal inequality follows that
\begin{multline*}
\E\bigg[\sup_{x\in [0,A]\cap 2^{-n}\Z}
\Big\vert \dfrac{1}{2}\sum_{i=1}^{2^{n}x}
\gamma(2^{-n}(i-1),2^{-n}i)^{-1}-
\E[\gamma(2^{-n}(i-1),2^{-n}i)^{-1}]\Big\vert^{2}\bigg]
\\\leq
4 \sum_{i=1}^{\lfloor 2^{n} A\rfloor}
\operatorname{Var}(\gamma(2^{-n}(i-1),2^{-n}i)^{-1})=
O(2^{-2n}).
\end{multline*}
Moreover,
\begin{displaymath}
\lim_{n\to +\infty}\sup_{x\in [-A,A]\cap 2^{-n}\Z}
\Big\vert \dfrac{1}{2}\sum_{i=1}^{2^{n}\vert x\vert}
\E[\gamma(2^{-n}i-\operatorname{sgn}(x)2^{-n},2^{-n}i)^{-1}]-S_{0}(x)\Big\vert=0.
\end{displaymath}

(2): The proof is similar to that of (1), using that
\begin{eqnarray*}
\E[2^{-2n+1}\gamma(x-2^{-n},x)]&=&2^{-n}L_{0}(x-2^{-n})L_{0}(x),
\\
\operatorname{Var}(2^{-2n+1}\gamma(x-2^{-n},x))
&=&2^{-3n+1}L_{0}(x-2^{-n})L_{0}(x),
\end{eqnarray*}
and applying Doob's maximal inequality.
\end{proof}

\begin{proofERRW}
Lemma \ref{LemTildeL0} (1) implies that
$(\widehat{\U}^{(n)}(x))_{x\in 2^{-n}\Z}$ converges in law, for the topology of uniform convergence on compacts, to 
$\U$ given by \eqref{EqU}. This can be proved similarly to
Lemma \ref{LemConvEnv}. Define the change of scale
$\widehat{\mathcal{S}}^{(n)}: 2^{-n}\Z \rightarrow\R$ by
$\widehat{\mathcal{S}}^{(n)}(0)=0$, and for $x\in 2^{-n}\Z$,
$x\neq 0$,
\begin{displaymath}
\widehat{\mathcal{S}}^{(n)}(x)=
\operatorname{sgn}(x)
\dfrac{1}{2}\sum_{i=1}^{2^{n}\vert x\vert}
\gamma(2^{-n}i-\operatorname{sgn}(x)2^{-n},2^{-n}i)^{-1}
e^{\widehat{\U}^{(n)}(2^{-n}i-\operatorname{sgn}(x)2^{-n})+\widehat{\U}^{(n)}(2^{n}i)}.
\end{displaymath}
Under this change of scale,
$(\widehat{\mathcal{S}}^{(n)}(\widehat{Z}^{(n)}_{k}))_{k\geq 0}$
conditional on the random environment
\\$(\gamma(x-2^{-n},x),\widehat\U^{(n)}(x))_{x\in 2^{-n}\Z}$ is a 
martingale.
Lemma \ref{LemTildeL0} (1) combined with the convergence of
$\widehat{\U}^{(n)}$ to $\U$, implies in turn that
$\widehat{\mathcal{S}}^{(n)}$ 
converges in law to $\mathcal{S}$ given by \eqref{EqNatScal}.
Let $(\widetilde{Z}^{(n)}_{q})_{q\geq 0}$ be an auxiliary process that has same trajectory as 
$(\widehat{Z}^{(n)}_{\lfloor 4^{n}q\rfloor})_{q\geq 0}$, but instead of
jumping at deterministic times in
$4^{-n}\N\setminus\{ 0\}$, jumps at independent exponential times with mean $4^{-n}$. Then the convergence of 
$(\widehat{Z}^{(n)}_{\lfloor 4^{n}q\rfloor})_{q\geq 0}$ 
is equivalent to that of 
$(\widetilde{Z}^{(n)}_{q})_{q\geq 0}$. As in the proof of Proposition
\ref{PropConvZq}, one can embed $(\widetilde{Z}^{(n)}_{q})_{q\geq 0}$
into a standard Brownian motion $(\beta_{s})_{s\geq 0}$.
$(\ell^{\beta}_{s}(\varsigma))_{\varsigma\in\R, s\geq 0}$ will denote the family of local times of the Brownian motion. 
We will also consider $(\beta_{s})_{s\geq 0}$ independent of the environment
$(\gamma(x-2^{-n},x),\widehat\U^{(n)}(x))_{x\in 2^{-n}\Z}$.
Let be the time change
\begin{multline*}
\mathsf{s}^{(n)}(q)=\\
\inf\Big\{s\geq 0\Big\vert
\dfrac{4^{-n}}{2}\sum_{x\in 2^{-n}\Z}
\big(
(\widehat{\mathcal{S}}^{(n)}(x+2^{-n})
-\widehat{\mathcal{S}}^{(n)}(x))^{-1}
+
(
\widehat{\mathcal{S}}^{(n)}(x)
-\widehat{\mathcal{S}}^{(n)}(x-2^{-n})
)^{-1}
\big)
\ell^{\beta}_{s}(\widehat{\mathcal{S}}^{(n)}(x))
\geq q
\Big\}.
\end{multline*}
Then one can take
\begin{displaymath}
\widetilde{Z}^{(n)}_{q}=
(\widehat{\mathcal{S}}^{(n)})^{-1}
(\beta_{\mathsf{s}^{(n)}(q)}).
\end{displaymath}
to conclude, we need the convergence in probability of
$(\mathsf{s}^{(n)}(q))_{q\geq 0}$, uniformly on compact subsets, to
$(s(q))_{q\geq 0}$ given by \eqref{Eqsq}. Note that
\begin{displaymath}
(\widehat{\mathcal{S}}^{(n)}(x)
-\widehat{\mathcal{S}}^{(n)}(x-2^{-n}))^{-1} = 
2\gamma(x-2^{-n},x)
e^{-\widehat{\U}^{(n)}(x-2^{-n})-\widehat{\U}^{(n)}(x)}.
\end{displaymath}
The convergence of $\widehat{\U}^{(n)}$ to $\U$, of
$\widehat{\mathcal{S}}^{(n)}$ to
$\mathcal{S}$ and Lemma
\ref{LemTildeL0} (2) imply the desired convergence.
\end{proofERRW}

\section{Convergence of the VRJP to the Linearly Reinforced Motion}
\label{SecConv}

In this section we prove that 
the Vertex Reinforced Jump Processes converges in law to a Linearly Reinforced Motion constructed using the Bass-Burdzy flow (Section \ref{SecBassBurdzy}). To this end, we will make appear something that looks like a Bass-Burdzy flow in discrete. We also use that we already have a limit obtained as a time-changed Markov diffusion in a random environment (Proposition \ref{ThmConv1}).

Define the scale functions $x\mapsto S^{(n)}_{t}(x)$ by
\begin{displaymath}
S^{(n)}_{0}(x)=
\left\lbrace
\begin{array}{l}
0~~~~~\text{if}~x=0, \\ 
2^{-n}\sum_{i=1}^{2^{n}x}
L_{0}(2^{-n}i)^{-1}L_{0}(2^{-n}(i-1))^{-1}
~~~~~\text{if}~x\in 2^{-n}\Z\cap(0,+\infty),\\  
-2^{-n}\sum_{i=1}^{2^{n}\vert x\vert}
L_{0}(-2^{-n}i)^{-1}L_{0}(-2^{-n}(i-1))^{-1}
~~~~~\text{if}~x\in 2^{-n}\Z\cap(-\infty,0),\\ 
(2^{-n}\lceil 2^{n}x\rceil-x)
S^{(n)}_{0}(2^{-n}\lfloor 2^{n}x\rfloor) 
+
(x-2^{-n}\lfloor 2^{n}x\rfloor)
S^{(n)}_{0}(2^{-n}\lceil 2^{n}x\rceil)
~~~~~\text{if}~x\not\in 2^{-n}\Z,
\end{array}
\right.
\end{displaymath}
and
\begin{displaymath}
\dfrac{\partial}{\partial t} S^{(n)}_{t}(x)=
\left\lbrace
\begin{array}{ll}
0 & \text{if}~x=X^{(n)}_{t}, \\ 
-L^{(n)}_{t}(X^{(n)}_{t})^{-2}
L^{(n)}_{t}(X^{(n)}_{t}+2^{-n})^{-1} & 
\text{if}~x\geq X^{(n)}_{t}+2^{-n},
 \\ 
+L^{(n)}_{t}(X^{(n)}_{t})^{-2}
L^{(n)}_{t}(X^{(n)}_{t}-2^{-n})^{-1} & 
\text{if}~x\leq X^{(n)}_{t}-2^{-n}, \\ 
-(x-2^{-n}\lfloor 2^{n}x\rfloor)
L^{(n)}_{t}(X^{(n)}_{t})^{-2}
L^{(n)}_{t}(X^{(n)}_{t}+2^{-n})^{-1} & 
\text{if}~x\in(X^{(n)}_{t},X^{(n)}_{t}+2^{-n}), \\ 
+(2^{-n}\lceil 2^{n}x\rceil-x)
L^{(n)}_{t}(X^{(n)}_{t})^{-2}
L^{(n)}_{t}(X^{(n)}_{t}-2^{-n})^{-1} & 
\text{if}~x\in(X^{(n)}_{t}-2^{-n},X^{(n)}_{t}).
\end{array}
\right. 
\end{displaymath}

\begin{rem}
\label{Rem}
$x\mapsto S_{t}^{(n)}(x)$ is a strictly increasing function. 
$S_{t}^{(n)}$ has been constructed in a way so as to always have, for $x\in 2^{-n}\Z$,
\begin{displaymath}
S_{t}^{(n)}(x)-S_{t}^{(n)}(x-2^{-n})=
2^{-n}L_{t}^{(n)}(x)^{-1}L_{t}^{(n)}(x-2^{-n})^{-1}.
\end{displaymath}
In particular,
\begin{eqnarray*}
S_{t}^{(n)}(+\infty)-S_{t}^{(n)}(-\infty)&=&
2^{-n}\sum_{i\in\Z}L_{t}^{(n)}(2^{-n}i)^{-1}
L_{t}^{(n)}(2^{-n}(i-1))^{-1}\\&\leq &
2^{-n}\sum_{i\in\Z}L_{0}(2^{-n}i)^{-1}
L_{0}(2^{-n}(i-1))^{-1}.
\end{eqnarray*}
Moreover,
\begin{equation}
\label{EqLimScale}
\lim_{n\to +\infty}S_{0}^{(n)}(x)=
\int_{0}^{x}L_{0}(r)^{-2} dr=
S_{0}(x).
\end{equation}
Condition \eqref{EqCond} ensures that $S_{0}(+\infty)=+\infty$ and
$S_{0}(-\infty)=-\infty$. However, for finite $n$, we do not necessarily have $S_{0}^{(n)}(+\infty)=+\infty$ and
$S_{0}^{(n)}(-\infty)=-\infty$.
\end{rem}

Consider the change of time
\begin{displaymath}
du^{(n)}(t)=\dfrac{L_{t}(X^{(n)}_{t}-2^{-n})+
L_{t}(X^{(n)}_{t}+2^{-n})}
{2L_{t}(X^{(n)}_{t})^{2}
L_{t}(X^{(n)}_{t}-2^{-n})
L_{t}(X^{(n)}_{t}+2^{-n})} dt,
\end{displaymath}
and the inverse time change
$t^{(n)}(u)$, for 
$u\in (0,u^{(n)}_{\rm max})
=(0,u^{(n)}(t^{(n)}_{\rm max}))\subseteq (0,+\infty)$.

\begin{lemma}
\label{LemMart}
The process
\begin{equation}
\label{EqDefMart}
(M^{(n)}_{u})_{u\geq 0}:=
(S_{t^{(n)}(u)\wedge t^{(n)}_{\rm max}}(X^{(n)}
_{t^{(n)}(u)\wedge t^{(n)}_{\rm max}}))_{u\geq 0}
\end{equation}
is a martingale with respect to its natural
filtration $(\mathcal{F}^{M^{(n)}}_{u})_{u\geq 0}$. 
It advances by jumps at discrete times. A.s.,
$u_{\rm max}^{(n)}=+\infty$.
Moreover, for $u_{1}>u_{0}\geq 0$,
\begin{displaymath}
\E[(M_{u_{1}}^{(n)}-M_{u_{0}}^{(n)})^{2}\vert 
\mathcal{F}^{M^{(n)}}_{u_{0}}]=
u_{1}-u_{0}.
\end{displaymath}
\end{lemma}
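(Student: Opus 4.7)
The plan is to verify in turn the three assertions---martingale, discrete jumps, $L^{2}$ identity---via direct stochastic calculus on the VRJP jumps, and then handle the non-explosion separately. The design of $S_{t}^{(n)}$ makes everything essentially algebraic: the defining formula sets $\partial_{t} S_{t}^{(n)}(X_{t}^{(n)})=0$, so $t\mapsto S_{t}^{(n)}(X_{t}^{(n)})$ is constant on each holding interval of the VRJP. Hence $M^{(n)}$ moves only at the $u$-images of the VRJP jump times, which are a.s.\ locally finite; this handles the ``advances by jumps at discrete times'' claim. At a jump $x\to x+\sigma 2^{-n}$ at $t$-time $\tau$, continuity of $L_{\cdot}^{(n)}$ combined with the explicit formula in Remark~\ref{Rem} yields
\[
\Delta M_{u^{(n)}(\tau)} = \sigma \cdot 2^{-n}\, L_{\tau}^{(n)}(x)^{-1}\, L_{\tau}^{(n)}(x+\sigma 2^{-n})^{-1}.
\]

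Combined with the VRJP jump rate $2^{2n-1} L_{t}^{(n)}(x+\sigma 2^{-n})$ from $x$, the $t$-predictable compensator of $M^{(n)}$ has rate $2^{n-1} L_{t}^{(n)}(x)^{-1}\sum_{\sigma=\pm 1}\sigma = 0$, so $M^{(n)}$ is a local martingale in $t$-time. The analogous computation for squares gives the $t$-rate of the predictable quadratic variation:
\[
\sum_{\sigma=\pm 1} 2^{2n-1} L_{t}^{(n)}(x+\sigma 2^{-n}) \cdot \bigl(2^{-n} L_{t}^{(n)}(x)^{-1} L_{t}^{(n)}(x+\sigma 2^{-n})^{-1}\bigr)^{2} = \frac{L_{t}^{(n)}(x-2^{-n}) + L_{t}^{(n)}(x+2^{-n})}{2\, L_{t}^{(n)}(x)^{2}\, L_{t}^{(n)}(x-2^{-n})\, L_{t}^{(n)}(x+2^{-n})},
\]
which is precisely $du^{(n)}/dt$. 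After the time-change this reads $\langle M^{(n)}\rangle_{u}=u$. Localizing by exit times from $[-A,A]$---on which $L_{t}^{(n)}$ stays bounded and thus $M^{(n)}$ is bounded---promotes the local martingale to an $L^{2}$-martingale, and the martingale property of $(M_{u}^{(n)})^{2}-u$ gives the orthogonality identity $\E[(M_{u_{1}}^{(n)} - M_{u_{0}}^{(n)})^{2}\mid \mathcal{F}_{u_{0}}^{M^{(n)}}] = u_{1}-u_{0}$.

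The main obstacle is the non-explosion claim $u_{\max}^{(n)}=+\infty$ a.s. I plan to argue by contradiction. On the event $\{u_{\max}^{(n)}<\infty\}$, the identity above together with Doob's $L^{2}$-maximal inequality gives $\E[\sup_{u<u_{\max}^{(n)}}(M_{u}^{(n)})^{2}]\leq 4\, u_{\max}^{(n)}<\infty$, so $M^{(n)}$ converges a.s.\ to a finite limit as $u\uparrow u_{\max}^{(n)}$. This is then contradicted: condition \eqref{EqCond} combined with a Riemann-sum comparison yields $S_{0}^{(n)}(\pm\infty)=\pm\infty$, and since $L_{t}^{(n)}$ coincides with $L_{0}$ outside the (bounded at each fixed $t<t_{\max}^{(n)}$) visited range of $X^{(n)}$, one gets $S_{t}^{(n)}(y)\to\pm\infty$ as $y\to\pm\infty$ for every fixed $t$. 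Standard arguments for the nearest-neighbor VRJP on $\Z$ ensure that its visited range is a.s.\ unbounded (whether through recurrence, transience to $\pm\infty$, or finite-time explosion). Hence $X_{t}^{(n)}$ attains arbitrarily large absolute values, forcing $|M_{u}^{(n)}|$ to do the same along a subsequence as $u\uparrow u_{\max}^{(n)}$, in contradiction with the a.s.\ finite limit. Therefore $u_{\max}^{(n)}=+\infty$ a.s.
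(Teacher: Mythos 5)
Your computation of the jump heights, the compensator (showing $M^{(n)}$ is a local martingale), and the predictable bracket rate---checking that it coincides with $du^{(n)}/dt$---is correct and is essentially the same calculation the paper does. The localization by spatial exit times from $[-A,A]$ (rather than the paper's choice of stopping after $N$ jumps, $U_N$) is a legitimate variant for establishing the $L^2$ identity on the stopped process.

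The gap is in the non-explosion argument. You assert that condition \eqref{EqCond} ``combined with a Riemann-sum comparison yields $S_0^{(n)}(\pm\infty)=\pm\infty$'', and build the contradiction on $S_{t}^{(n)}(y)\to\pm\infty$ as $y\to\pm\infty$. This is false, and the paper flags it explicitly in Remark~\ref{Rem}: ``for finite $n$, we do not necessarily have $S_{0}^{(n)}(+\infty)=+\infty$ and $S_{0}^{(n)}(-\infty)=-\infty$.'' Condition \eqref{EqCond} controls the integral $\int L_0^{-2}$, but $S_0^{(n)}(+\infty)=2^{-n}\sum_{i\geq 1}L_0(2^{-n}i)^{-1}L_0(2^{-n}(i-1))^{-1}$ is a discrete sum that needs $L_0$ only at lattice points; a continuous $L_0$ with large peaks at the points of $2^{-n}\Z$ and dips to $O(1)$ in between gives $\int L_0^{-2}=\infty$ while $S_0^{(n)}(+\infty)<\infty$. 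When the scale is bounded, the $L^2$-convergent martingale $M^{(n)}$ can converge to a finite limit even though $X^{(n)}$ escapes to $+\infty$, so the contradiction you set up does not close. (The auxiliary appeal to ``standard arguments'' that the one-dimensional VRJP a.s.\ has unbounded range is also uncited and would need its own justification, though that is a secondary issue.) The paper's route to $u_{\max}^{(n)}=+\infty$ is different and more self-contained: it stops after $N$ jumps, passes to the limit $N\to\infty$ in the displayed identity $\E[(M^{(n)}_{u_1\wedge U_N}-M^{(n)}_{u_0\wedge U_N})^2\vert\mathcal{F}^{M^{(n)}}_{u_0}]=\E[u_1\wedge U_N-u_0\wedge U_N\vert\mathcal{F}^{M^{(n)}}_{u_0}]$, and reads off directly that $\{u_{\max}^{(n)}\in(u_0,u_1)\}$ is null for every $u_0<u_1$, without invoking any claim about $S_0^{(n)}(\pm\infty)$.
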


\begin{proof}
Given $u\in [0,u^{(n)}_{\rm max})$, $M^{(n)}$ will make a jump on
the infinitesimal time interval $(u,u+du)$ with infinitesimal probability
\begin{multline}
\label{EqInfJumpRatedu}
2^{2n-1}(L_{t^{(n)}(u)}(X^{(n)}_{t^{(n)}(u)}-2^{-n})+
L_{t^{(n)}(u)}(X^{(n)}_{t^{(n)}(u)}+2^{-n}))
\dfrac{d t^{(n)}(u)}{du}du
\\=4^{n}
L_{t^{(n)}(u)}(X^{(n)}_{t^{(n)}(u)})^{2}
L_{t^{(n)}(u)}(X^{(n)}_{t^{(n)}(u)}-2^{-n})
L_{t^{(n)}(u)}(X^{(n)}_{t^{(n)}(u)}+2^{-n})
du.
\end{multline}
Conditional that the jump occurs, it will be of height
\begin{displaymath}
+2^{-n}
L_{t^{(n)}(u)}(X^{(n)}_{t^{(n)}(u)})^{-1}
L_{t^{(n)}(u)}(X^{(n)}_{t^{(n)}(u)}+2^{-n})^{-1}
\end{displaymath}
with probability
\begin{displaymath}
\dfrac{L_{t^{(n)}(u)}(X^{(n)}_{t^{(n)}(u)}+2^{-n})}
{L_{t^{(n)}(u)}(X^{(n)}_{t^{(n)}(u)}-2^{-n})+
L_{t^{(n)}(u)}(X^{(n)}_{t^{(n)}(u)}+2^{-n})},
\end{displaymath}
and of height
\begin{displaymath}
-2^{-n}
L_{t^{(n)}(u)}(X^{(n)}_{t^{(n)}(u)})^{-1}
L_{t^{(n)}(u)}(X^{(n)}_{t^{(n)}(u)}-2^{-n})^{-1}
\end{displaymath}
with probability
\begin{displaymath}
\dfrac{L_{t^{(n)}(u)}(X^{(n)}_{t^{(n)}(u)}-2^{-n})}
{L_{t^{(n)}(u)}(X^{(n)}_{t^{(n)}(u)}-2^{-n})+
L_{t^{(n)}(u)}(X^{(n)}_{t^{(n)}(u)}+2^{-n})}.
\end{displaymath}
So the expected height of the jump is $0$, and the expected height squared is
\begin{displaymath}
4^{-n}
L_{t^{(n)}(u)}(X^{(n)}_{t^{(n)}(u)})^{-2}
L_{t^{(n)}(u)}(X^{(n)}_{t^{(n)}(u)}-2^{-n})^{-1}
L_{t^{(n)}(u)}(X^{(n)}_{t^{(n)}(u)}+2^{-n})^{-1},
\end{displaymath}
which is exactly the inverse of the jump rate
\eqref{EqInfJumpRatedu}.

Let $(U_{N})_{N\geq 0}$ be the family of stopping times after performing $N$ jumps. We get that 
$(M^{(n)}_{u_{1}\wedge U_{N}}-M^{(n)}_{u_{0}\wedge U_{N}})_{N\geq 0}$
is an $\mathbb{L}^{2}$ convergent martingale and at the limit,
\begin{eqnarray*}
\E[(M_{u_{1}}^{(n)}-M_{u_{0}}^{(n)})^{2}\vert 
\mathcal{F}^{M^{(n)}}_{u_{0}}]&=&
\lim_{N\to +\infty}
\E[(M^{(n)}_{u_{1}\wedge U_{N}}-M^{(n)}_{u_{0}\wedge U_{N}})^{2}\vert 
\mathcal{F}^{M^{(n)}}_{u_{0}}]
\\&=&\lim_{N\to +\infty}
\E[u_{1}\wedge U_{N}-u_{0}\wedge U_{N}\vert\mathcal{F}^{M^{(n)}}_{u_{0}}]
\\&=&
\E[u_{1}\wedge u^{(n)}_{\rm max}-
u_{0}\wedge u^{(n)}_{\rm max}\vert\mathcal{F}^{M^{(n)}}_{u_{0}}]
\end{eqnarray*}
Since on the event $u^{(n)}_{\rm max}\in (u_{0},u_{1})$ we would have
$(M_{u_{1}}^{(n)}-M_{u_{0}}^{(n)})^{2}=+\infty$, this in particular means that it has probability $0$, and further that
$u_{\rm max}^{(n)}=+\infty$ a.s.
\end{proof}

We consider the process $((X^{\ast}_{t})_{t\geq 0},
(L^{\ast}_{t}(x))_{x\in \R, t\geq 0})$ 
obtained as a limit in law of the VRJP
$((X^{(n)}_{t})_{0\leq t\leq t^{(n)}_{\rm max}},
(L^{(n)}_{t}(x))_{x\in 2^{-n}\Z, 0\leq t\leq t^{(n)}_{\rm max}})$
in Theorem \ref{ThmConv1}. We define
\begin{displaymath}
\widetilde{S}_{t}^{\ast}(x)=
\int_{X^{\ast}_{t}}^{x}
L^{\ast}_{t}(r)^{-2} dr.
\end{displaymath}
$\widetilde{S}_{t}^{\ast -1}$ is the inverse diffeomorphism of 
$\widetilde{S}_{t}^{\ast}$ on $\R$. We define the time change
\begin{displaymath}
du^{\ast}(t)= 
L^{\ast}_{t}(X^{\ast}_{t})^{-3} dt,
\end{displaymath}
and $t^{\ast}(u)$ the inverse time change. 

\begin{lemma}
\label{LemUast}
A.s., $u^{\ast}(+\infty)= +\infty$.
\end{lemma}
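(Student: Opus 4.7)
The plan is to reduce the statement to the divergence condition \eqref{EqCond} via two changes of variable: first reverting from the $t$-scale to the $q$-scale of the diffusion $Z$, then applying the occupation-time formula.

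First I would unfold the definition. By \eqref{EqTC1} one has $dt = (L_{0}(Z_{q})^{2}+2\lambda_{q}(Z_{q}))^{-1/2}\,dq$, and by construction $L^{\ast}_{t}(X^{\ast}_{t}) = (L_{0}(Z_{q(t)})^{2}+2\lambda_{q(t)}(Z_{q(t)}))^{1/2}$. Therefore
\begin{displaymath}
L^{\ast}_{t}(X^{\ast}_{t})^{-3}\,dt = (L_{0}(Z_{q})^{2}+2\lambda_{q}(Z_{q}))^{-2}\,dq,
\end{displaymath}
so, using Lemma \ref{LemTimeChangeLSRM} which guarantees that $q(t)\to +\infty$ as $t\to +\infty$,
\begin{displaymath}
u^{\ast}(+\infty) = \int_{0}^{+\infty}(L_{0}(Z_{q})^{2}+2\lambda_{q}(Z_{q}))^{-2}\,dq.
\end{displaymath}

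Next I would apply the occupation-time formula to the integrand $g(Z_{q},\lambda_{q}(Z_{q}))$ with $g(x,y)=(L_{0}(x)^{2}+2y)^{-2}$. Since $d_{q}\lambda_{q}(x)$ is concentrated on $\{q:Z_{q}=x\}$, one gets
\begin{displaymath}
\int_{0}^{+\infty}g(Z_{q},\lambda_{q}(Z_{q}))\,dq = \int_{\R}\int_{0}^{+\infty}g(x,\lambda_{q}(x))\,d_{q}\lambda_{q}(x)\,dx.
\end{displaymath}
Changing variables $y=\lambda_{q}(x)$ in the inner integral (which is legitimate because $q\mapsto\lambda_{q}(x)$ is continuous and non-decreasing) turns it into $\int_{0}^{\lambda_{\infty}(x)}g(x,y)\,dy$.

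The key point that needs to be checked is that $\lambda_{\infty}(x):=\lim_{q\to +\infty}\lambda_{q}(x)=+\infty$ for every $x$, almost surely. This follows from recurrence of $(Z_{q})_{q\geq 0}$: as recalled after \eqref{EqNatScal}, under condition \eqref{EqCond} and the a.s. boundedness from below of $\U$, we have $\mathcal{S}(\pm\infty)=\pm\infty$, so $(\mathcal{S}(Z_{q}))_{q\geq 0}$ is a time-changed Brownian motion on $\R$, hence recurrent, and thus $(Z_{q})_{q\geq 0}$ visits every point at arbitrarily large times with diverging local time.

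Once this is granted, a direct computation gives $\int_{0}^{+\infty}(L_{0}(x)^{2}+2y)^{-2}\,dy = \tfrac{1}{2}L_{0}(x)^{-2}$, and therefore
\begin{displaymath}
u^{\ast}(+\infty) = \dfrac{1}{2}\int_{\R}L_{0}(x)^{-2}\,dx = +\infty
\end{displaymath}
by the hypothesis \eqref{EqCond}. The only genuinely non-bookkeeping step is the recurrence input; everything else is a direct manipulation of the time change \eqref{EqTC1} and the occupation density of $Z$.
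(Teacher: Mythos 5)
Your proof is correct and is essentially the same argument as the paper's: both rely on converting the time integral into an integral over the occupation density and then invoking condition \eqref{EqCond}; you simply carry it out in the $q$-time scale using $\lambda_q$ where the paper stays in $t$-time and works with $L^{\ast}_t$ directly. Your version has the merit of making explicit the input $\lambda_{\infty}(x)=+\infty$ (equivalently $L^{\ast}_{\infty}(x)=+\infty$), which the paper uses without comment in the step $\int_0^{\infty}L^{\ast}_t(x)^{-3}\,d_tL^{\ast}_t(x)=\tfrac{1}{2}L_0(x)^{-2}$.
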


\begin{proof}
\begin{displaymath}
u^{\ast}(+\infty)=\int_{0}^{+\infty}L^{\ast}_{t}(X^{\ast}_{t})^{-3} dt=+\infty.
\end{displaymath}
But the above integral equals
\begin{displaymath}
\int_{\R}\int_{0}^{+\infty}
L^{\ast}_{t}(x)^{-3} d_{t}L^{\ast}_{t}(x)dx=
\dfrac{1}{2}\int_{\R}L_{0}(x)^{-2} dx,
\end{displaymath}
which is $+\infty$ by \eqref{EqCond}.

\end{proof}

In discrete, we define
\begin{displaymath}
\widetilde{S}_{t}^{(n)}(x)=S_{t}^{(n)}(x)-
S_{t}^{(n)}(X^{(n)}_{t}),
\end{displaymath}
and $\widetilde{S}_{t}^{(n) -1}$ the inverse function on
$(S_{t}^{(n)}(-\infty)-
S_{t}^{(n)}(X^{(n)}_{t}),
S_{t}^{(n)}(+\infty)-
S_{t}^{(n)}(X^{(n)}_{t}))$.

From Theorem \ref{ThmConv1} immediately follows the following convergence result:

\begin{lemma}
\label{LemConvALot}
We have a joint convergence in law of processes
\begin{displaymath}
(X^{(n)}_{t}, L^{(n)}_{t}(x),
u^{(n)}(t), t^{(n)}(u),
\widetilde{S}_{t}^{(n)}(x),\widetilde{S}_{t}^{(n) -1}(y))
\end{displaymath}
towards
\begin{displaymath}
(X^{\ast}_{t}, L^{\ast}_{t}(x),
u^{\ast}(t), t^{\ast}(u),
\widetilde{S}_{t}^{\ast}(x),\widetilde{S}_{t}^{\ast -1}(y)).
\end{displaymath}
For $\widetilde{S}_{t}^{(n)}(x)$ and
$\widetilde{S}_{t}^{(n) -1}(y)$ we use the topology of uniform convergence on compact subsets of
$\R\times [0,+\infty)$. In particular,
$t^{(n)}_{\rm max}$ 
converges in probability towards $+\infty$, and, for any $t_{0}\geq 0$,
\begin{displaymath}
(\sup_{0\leq t\leq t_{0}}\widetilde{S}_{t}^{(n)}(-\infty),
\inf_{0\leq t\leq t_{0}}\widetilde{S}_{t}^{(n)}(+\infty))
\end{displaymath}
converges in probability towards $(-\infty,+\infty)$.
\end{lemma}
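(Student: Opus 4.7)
The plan is to reduce the lemma to a continuous-mapping argument combined with Skorokhod representation. Theorem~\ref{ThmConv1} already provides joint convergence in law of $(X^{(n)}, L^{(n)})$ to $(X^\ast, L^\ast)$, together with $t^{(n)}_{\max} \to +\infty$ in probability; here $L^{(n)} \to L^\ast$ uniformly on compact subsets of $\R \times [0, +\infty)$ (with the spatial processes linearly interpolated outside $2^{-n}\Z$) and $X^{(n)} \to X^\ast$ uniformly on compact time intervals. Using Skorokhod's representation theorem, I realize this convergence almost surely on a common probability space, prove the claimed joint convergences there, and deduce joint convergence in law.

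For the scale functions, the telescoping identity from Remark~\ref{Rem} gives, for $x_1 < x_2$ in $2^{-n}\Z$,
\[
S^{(n)}_t(x_2) - S^{(n)}_t(x_1) = 2^{-n}\!\!\sum_{i = 2^n x_1 + 1}^{2^n x_2} L^{(n)}_t(2^{-n} i)^{-1} L^{(n)}_t(2^{-n}(i-1))^{-1}.
\]
This is a Riemann-type sum; combining uniform convergence $L^{(n)} \to L^\ast$ on compacts with continuity and strict positivity of $L^\ast$ (one has $L^\ast_t \geq L_0 > 0$ by Lemma~\ref{LemTimeChangeLSRM}) yields uniform convergence on compact parameter ranges to $\int_{x_1}^{x_2} L^\ast_t(r)^{-2}\,dr$. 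Subtracting $S^{(n)}_t(X^{(n)}_t)$ and using continuity of $(t, x) \mapsto L^\ast_t(x)$ together with $X^{(n)} \to X^\ast$ gives uniform convergence $\widetilde{S}^{(n)}_t(x) \to \widetilde{S}^\ast_t(x)$ on compact subsets of $\R \times [0, +\infty)$, whence convergence of the inverses $\widetilde{S}^{(n)-1}_t(y) \to \widetilde{S}^{\ast -1}_t(y)$ on compacts by strict monotonicity in $x$. An analogous argument handles the time change: the integrand defining $du^{(n)}/dt$ converges uniformly on compact time intervals to $L^\ast_s(X^\ast_s)^{-3}$, so $u^{(n)}(t) \to u^\ast(t)$ locally uniformly, and $t^{(n)}(u) \to t^\ast(u)$ by monotonicity combined with $u^\ast(+\infty) = +\infty$ from Lemma~\ref{LemUast}.

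The main obstacle is the tail claim that $\inf_{0 \leq t \leq t_0} \widetilde{S}^{(n)}_t(+\infty) \to +\infty$ in probability (and symmetrically for $-\infty$). Fix $M, \eps > 0$ and $t_0 \geq 0$. Using almost-sure boundedness of $X^\ast$ on $[0, t_0]$, pick $A > 0$ so that $\mathbb{P}(\sup_{s \leq t_0} |X^\ast_s| > A) < \eps$. On the complementary event, no reinforcement of $L^\ast$ occurs outside $[-A, A]$ on $[0, t_0]$ (by Lemma~\ref{LemTimeChangeLSRM}), so $L^\ast_t(r) = L_0(r)$ for all $r > A$ and $t \in [0, t_0]$, giving for any $K > A$
\[
\widetilde{S}^\ast_t(K) = \int_{X^\ast_t}^{K} L^\ast_t(r)^{-2}\,dr \geq \int_A^K L_0(r)^{-2}\,dr.
\]
By condition \eqref{EqCond}, $K$ can be chosen so large that the right-hand side exceeds $M + 1$, whence $\inf_{t \leq t_0} \widetilde{S}^\ast_t(K) \geq M + 1$ on this event. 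The uniform convergence $\widetilde{S}^{(n)}_t(K) \to \widetilde{S}^\ast_t(K)$ on $[0, t_0]$ established in the previous paragraph then forces $\inf_{t \leq t_0} \widetilde{S}^{(n)}_t(+\infty) \geq \inf_{t \leq t_0} \widetilde{S}^{(n)}_t(K) \geq M$ with probability at least $1 - 2\eps$ for all $n$ large, proving the claim; the left tail is symmetric, and the convergence $t^{(n)}_{\max} \to +\infty$ in probability is already furnished by Theorem~\ref{ThmConv1}.
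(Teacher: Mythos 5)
Your proof is correct and follows the paper's intended route: the paper gives no explicit argument at all, merely stating that Lemma~\ref{LemConvALot} ``immediately follows'' from Proposition~\ref{ThmConv1}, and your Skorokhod-representation plus continuous-mapping argument (telescoping the scale function via Remark~\ref{Rem}, using $L^{(n)}_t,L^\ast_t\geq L_0>0$ for the Riemann-sum and time-change limits, and exploiting non-reinforcement outside $[-A,A]$ together with condition~\eqref{EqCond} for the tail estimate) supplies exactly the details the authors leave implicit.
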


\begin{prop}
\label{PropConvBM}
The martingale $(M^{(n)}_{u})_{u\geq 0}$, introduced in
\eqref{EqDefMart}, converges in law to a standard Brownian motion started at $0$, $(B_{u})_{u\geq 0}$, in the Skorokhod topology.
\end{prop}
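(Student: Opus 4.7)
The plan is to apply a martingale functional central limit theorem. From Lemma~\ref{LemMart} we already have the two essential structural ingredients: $M^{(n)}_0 = S^{(n)}_0(X^{(n)}_0) = S^{(n)}_0(0) = 0$, and the exact identity
\begin{displaymath}
\E[(M^{(n)}_{u_1} - M^{(n)}_{u_0})^2 \,\vert\, \mathcal{F}^{M^{(n)}}_{u_0}] = u_1 - u_0,
\end{displaymath}
which means the predictable quadratic variation $\langle M^{(n)}\rangle_u = u$ is deterministic and already equals the target. Consequently, to invoke the martingale CLT (for instance, Theorem~1.4 in Section~7.1 of \cite{EthierKurtz1986Markov}, already used in the proof of Lemma~\ref{LemConvEnv}) it only remains to verify the Lindeberg-type condition that the maximal jump of $M^{(n)}$ on any compact time interval tends to $0$ in probability, or in $L^{2}$.

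For the jump condition, I would use the description of the jump sizes from the proof of Lemma~\ref{LemMart}: each jump of $M^{(n)}$ has absolute value at most
\begin{displaymath}
2^{-n}\, L^{(n)}_{t^{(n)}(u)}(X^{(n)}_{t^{(n)}(u)})^{-1}\, L^{(n)}_{t^{(n)}(u)}(X^{(n)}_{t^{(n)}(u)} \pm 2^{-n})^{-1}.
\end{displaymath}
Since reinforcement is monotone, $L^{(n)}_t \geq L^{(n)}_0$, so this is further bounded by $2^{-n} L_0(X^{(n)}_{t^{(n)}(u)})^{-1} L_0(X^{(n)}_{t^{(n)}(u)} \pm 2^{-n})^{-1}$. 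Given $u_0 > 0$ and $\varepsilon > 0$, the joint convergence in Lemma~\ref{LemConvALot} provides a compact set $K \subset \R$ and an $n_0$ such that for $n \geq n_0$, the event $\{X^{(n)}_{t^{(n)}(u)} \in K \text{ for all } u \in [0, u_0]\}$ has probability at least $1 - \varepsilon$. On $K$, continuity and positivity of $L_0$ give a uniform positive lower bound, so on this high-probability event the supremum of the jumps is $O(2^{-n})$.

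Combining these two ingredients, the martingale CLT yields convergence of $(M^{(n)}_u)_{u\geq 0}$ in the Skorokhod topology to a continuous martingale with quadratic variation $u$, which by L\'evy's characterization is a standard Brownian motion started at $0$. The only non-trivial step is the uniform jump bound, and this is not really an obstacle here: it reduces to the tightness of $(X^{(n)}_{t^{(n)}(\cdot)})_n$ on compact time intervals, which is exactly what Lemma~\ref{LemConvALot} supplies. The rest is a direct application of standard machinery, made especially clean by the exact identity $\langle M^{(n)}\rangle_u = u$ from Lemma~\ref{LemMart}.
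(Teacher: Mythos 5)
Your overall strategy is the same as the paper's: apply the martingale functional CLT (Ethier--Kurtz, Theorem~1.4 in Section~7.1), using the exact identity $\E[(M^{(n)}_{u_1}-M^{(n)}_{u_0})^2\mid\mathcal{F}^{M^{(n)}}_{u_0}]=u_1-u_0$ from Lemma~\ref{LemMart} to handle the quadratic variation, and then control the jumps. But there is a genuine gap in the jump step. The cited Ethier--Kurtz theorem requires an $L^2$ condition, namely $\lim_n\E\bigl[\sup_{u\le u_0}(M^{(n)}_u-M^{(n)}_{u^-})^2\bigr]=0$, and your argument only delivers control of the jumps on an event of probability $\geq 1-\varepsilon$. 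Since $L_0$ is allowed to decay towards zero at infinity (condition \eqref{EqCond} is about $\int L_0^{-2}=\infty$, not about boundedness below), the jump $2^{-n}L_0(X^{(n)})^{-1}L_0(X^{(n)}\pm2^{-n})^{-1}$ is not bounded uniformly over $x$, and the small-probability event where $X^{(n)}$ leaves a given compact set can carry arbitrarily large jumps; ``with high probability the jumps are $O(2^{-n})$'' therefore does not translate into the required expectation bound without additional argument.

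The paper closes this gap with a localization you should note: it fixes $A>0$, lets $T^{(n)}_A$ be the exit time of $X^{(n)}$ from $[-A,A]$, and defines $M^{(n,A)}$ to agree with $M^{(n)}$ up to $u^{(n)}(T^{(n)}_A)$ and to continue as an independent standard Brownian motion thereafter. This keeps both martingale properties ($M^{(n,A)}$ and $(M^{(n,A)})^2-u$ are martingales started at $0$) while making every jump deterministically bounded by $2^{-n}\bigl(\inf_{[-A-2^{-n},\,A+2^{-n}]}L_0\bigr)^{-2}$, so the $L^2$ condition is immediate. The CLT then gives $M^{(n,A)}\Rightarrow B$. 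Finally, the paper uses the joint convergence from Proposition~\ref{ThmConv1}/Lemma~\ref{LemConvALot} to show $\lim_{u\to+\infty}\sup_{n}\mathbb{P}(u^{(n)}(T^{(n)}_A)\leq u)=0$, which is what lets one deduce the convergence of $M^{(n)}$ itself from that of $M^{(n,A)}$. Your proposal correctly identifies the tightness of $X^{(n)}$ as the key input but stops one step short of the construction that turns that tightness into the $L^2$ jump bound the theorem actually demands.
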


\begin{proof}
For $A>0$, let $T^{(n)}_{A}$ be the first time $X^{(n)}_{t}$ exits from the interval $[-A,A]$. Define $(M_{u}^{(n,A)})_{u\geq 0}$ to be the process that coincides with 
$(M_{u}^{(n)})_{u\geq 0}$ on the time-interval
$[0,u^{(n)}(T^{(n)}_{A})]$, and after time
$u^{(n)}(T^{(n)}_{A})$ behaves like conditional independent standard Brownian motion started from $M_{u^{(n)}(T^{(n)}_{A})}^{(n)}$.
$(M_{u}^{(n,A)})_{u\geq 0}$ is constructed in a way such that it is a martingale started from $0$ and moreover,
$((M_{u}^{(n,A)})^{2}-u)_{u\geq 0}$ is a martingale too.
Furthermore, one has a uniform control on the size of the jump of
$(M_{u}^{(n,A)})_{u\geq 0}$. All of them are smaller than or equal to
\begin{displaymath}
2^{-n}\Big(\inf_{[-A-2^{-n},A+2^{-n}]}L_{0}\Big)^{-2},
\end{displaymath}
and, in particular,
\begin{displaymath}
\lim_{n\to +\infty}\E\Big[\sup_{u\geq 0}
(M_{u}^{(n,A)}-M_{u^{-}}^{(n,A)})^{2}\Big]=0.
\end{displaymath}
According to Theorem 1.4, Section 7.1 in \cite{EthierKurtz1986Markov}
(a martingale functional Central Limit Theorem),
$(M_{u}^{(n,A)})_{u\geq 0}$ converges in law as $n\to +\infty$ to a standard Brownian motion started from $0$. Now, $T^{(n)}_{A}$ converges in law to $T^{\ast}_{A}$, the first time $X^{\ast}_{t}$ exits
$[-A,A]$, and $u^{(n)}(T^{(n)}_{A})$ converges to
\begin{displaymath}
\int_{0}^{T^{\ast}_{A}}L^{\ast}_{t}(X^{\ast}_{t})^{-3} dt.
\end{displaymath}
In particular,
\begin{displaymath}
\lim_{u\to +\infty}\sup_{n\in\mathbb{N}}
\mathbb{P}(u^{(n)}(T^{(n)}_{A})\leq u)=0.
\end{displaymath}
Thus, $(M_{u}^{(n)})_{u\geq 0}$ converges in law to a Brownian motion, too.
\end{proof}

\begin{prop}
\label{PropMainBis}
The limit process
$((X^{\ast}_{t})_{t\geq 0},
(L^{\ast}_{t}(x))_{x\in \R, t\geq 0})$ obtained in
Proposition \ref{ThmConv1} has the same law as a Linearly Reinforced Motion $((\widehat{X}_{t})_{t\geq 0},(L_{t}(x))_{x\in \R, t\geq 0})$ started from $0$, with initial occupation profile $L_{0}$.
Consequently, one gets Theorem \ref{ThmMain} and \ref{ThmMixture}.
\end{prop}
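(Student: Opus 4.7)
The plan is to identify the limit $(X^\ast_t, L^\ast_t)$ of Proposition \ref{ThmConv1} with the LRM of Definition \ref{DefLSRM} by constructing, from the discrete scale functions $S^{(n)}_t$, a discrete analogue of the convergent Bass-Burdzy flow driven by the martingale $M^{(n)}_u$. Since $M^{(n)}$ converges to a standard Brownian motion $B$ by Proposition \ref{PropConvBM}, the limit of $S^{(n)}_{t^{(n)}(u)}(x)$ should be the Bass-Burdzy flow $\Psi^B_u(S_0(x))$, which will match the LRM through the usual relations.

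First, by Skorokhod representation applied to the joint convergence of Lemma \ref{LemConvALot} together with Proposition \ref{PropConvBM}, I would work on a probability space on which all the processes converge almost surely in the appropriate topologies. For $x\in\R$ with $|x-X^{(n)}_{t^{(n)}(u)}|\geq 2^{-n}$, a direct calculation from the definitions of $\partial_t S^{(n)}_t$ and $du^{(n)}/dt$ gives
\begin{equation*}
\frac{d}{du} S^{(n)}_{t^{(n)}(u)}(x) = -\1_{x>X^{(n)}_{t^{(n)}(u)}}\, r^{(n)}_{+}(u) + \1_{x<X^{(n)}_{t^{(n)}(u)}}\, r^{(n)}_{-}(u),
\end{equation*}
where $r^{(n)}_{\pm}(u)$ is a ratio of values of $L^{(n)}_{t^{(n)}(u)}$ at sites neighbouring $X^{(n)}_{t^{(n)}(u)}$, converging almost surely to $1$ by continuity of $L_0$. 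Integrating in $u$ and passing to the limit, the process $Y_u(x):=\widetilde{S}^\ast_{t^\ast(u)}(x)+B_u$ (which is the limit of $S^{(n)}_{t^{(n)}(u)}(x)$) is a Lipschitz solution of
\begin{equation*}
\frac{dY_u(x)}{du} = -\1_{Y_u(x)>B_u}+\1_{Y_u(x)<B_u}, \qquad Y_0(x)=S_0(x).
\end{equation*}
By uniqueness in Proposition \ref{PropBBFlow}, $Y_u(x)=\Psi^B_u(S_0(x))$.

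Since $M^{(n)}_u=S^{(n)}_{t^{(n)}(u)}(X^{(n)}_{t^{(n)}(u)})$, taking limits gives $B_u=Y_u(X^\ast_{t^\ast(u)})=\Psi^B_u(S_0(X^\ast_{t^\ast(u)}))$, hence
\begin{equation*}
X^\ast_{t^\ast(u)} = S_0^{-1}\big((\Psi^B_u)^{-1}(B_u)\big) = S_0^{-1}(\xi_u),
\end{equation*}
which matches the definition of the LRM. Differentiating the identity $Y_u(x)=\Psi^B_u(S_0(x))$ in $x$ and using \eqref{EqSpaceDeriv} and \eqref{EqIDLocTime} together with $\partial_x Y_u(x)=L^\ast_{t^\ast(u)}(x)^{-2}$ yields
\begin{equation*}
L^\ast_t(x)^{-2} = L_0(x)^{-2}(1-2\Lambda_{u^\ast(t)}(S_0(x))),
\end{equation*}
which is exactly \eqref{EqLt}. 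Evaluating at $x=X^\ast_t$ gives $L^\ast_t(X^\ast_t)^3 = L_0(S_0^{-1}(\xi_u))^3 (1-2\Lambda_u(\xi_u))^{-3/2}$, so that the definition $du^\ast(t)=L^\ast_t(X^\ast_t)^{-3}dt$ coincides with the LRM time change \eqref{EqTimeChange}. This shows $(X^\ast_t,L^\ast_t)$ has the law of an LRM started from $0$ with initial occupation profile $L_0$, and together with Proposition \ref{ThmConv1} establishes Theorems \ref{ThmMain} and \ref{ThmMixture}.

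The main technical obstacle is the justification of passing from the discrete ODE for $S^{(n)}_{t^{(n)}(u)}(x)$ to the continuous Bass-Burdzy equation. The discrete derivative carries $O(1)$ errors on the $u$-set where $|x-X^{(n)}_{t^{(n)}(u)}|<2^{-n}$, which is not a priori negligible. I would handle this by fixing $x$, observing that $\{u:\xi_u=S_0(x)\}$ has zero Lebesgue measure almost surely (as $\Lambda_u(S_0(x))$ is continuous and increasing), and using the uniform $1$-Lipschitz bound on the Bass-Burdzy flow to get local equicontinuity of $u\mapsto S^{(n)}_{t^{(n)}(u)}(x)$; one can then pass to the limit in the integral form of the ODE and invoke uniqueness from Proposition \ref{PropBBFlow}. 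Extending the pointwise-in-$x$ identification to the locally uniform one, which is what is needed to identify $L^\ast_t$, is then a further equicontinuity argument, similar in spirit to the proof of continuity of the flow in \cite{BassBurdzy99StochBiff,HuWarren00BBFlow}.
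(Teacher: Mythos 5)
Your proposal follows essentially the same line of argument as the paper's own proof: you identify the rescaled, time-changed flow $S^{(n)}_{t^{(n)}(u)}(\cdot)$ as a discrete approximation of the convergent Bass--Burdzy flow driven by the martingale $M^{(n)}_u$, pass to the subsequential limit (the paper does not invoke Skorokhod representation by name, but the tightness/subsequential-limit argument is equivalent), and invoke the uniqueness of Lipschitz solutions to \eqref{EqBassBurdzy} -- the paper cites Theorem~2.3 of \cite{BassBurdzy99StochBiff}, you cite the uniqueness part of Proposition~\ref{PropBBFlow} -- to conclude that the limit is the flow $\Psi^{B}$, hence that $X^{\ast}_{t^{\ast}(u)}=S_{0}^{-1}(\xi_{u})$.

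The one place where you are actually more explicit than the paper is the verification of the occupation-profile identity: you differentiate $Y_{u}(x)=\Psi^{B}_{u}(S_{0}(x))$ in $x$ using \eqref{EqSpaceDeriv} and \eqref{EqIDLocTime} to obtain $L^{\ast}_{t}(x)^{-2}=L_{0}(x)^{-2}(1-2\Lambda_{u^{\ast}(t)}(S_{0}(x)))$, which is needed to check that $u^{\ast}(t)$ agrees with the LRM time change \eqref{EqTimeChange}; the paper leaves this step implicit. Your handling of the $O(1)$ error on $\{u:|x-X^{(n)}_{t^{(n)}(u)}|<2^{-n}\}$ via the null Lebesgue measure of $\{u:\xi_{u}=S_{0}(x)\}$ plays the same role as the paper's explicit two-sided bound \eqref{EqBound} combined with the a.s.\ Lipschitz control on $u\mapsto\Psi^{\ast}_{u}(y)$. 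Two small imprecisions worth flagging: the ratios $r^{(n)}_{\pm}(u)$ converge to $1$ not merely by continuity of $L_{0}$ but because the joint limit $L^{\ast}_{t}(\cdot)$ is continuous and the two neighbouring sites collapse together as $n\to\infty$; and to conclude that you have identified the flow $\Psi^{B}$ (not just individual solution curves), you still need to promote the pointwise-in-$x$ identification to a locally uniform one, as you yourself note -- the paper does this by working with the composed map $\Psi^{(n)}_{u}(y)=S^{(n)}_{t^{(n)}(u)}\circ S^{(n)-1}_{0}(y)$ and the jointly continuous version of the flow from Proposition~\ref{PropBBFlow}.
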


\begin{proof}
From Lemma \ref{LemConvALot} and Proposition
\ref{PropConvBM}, the process
\begin{displaymath}
(X^{(n)}_{t}, L^{(n)}_{t}(x),
u^{(n)}(t), t^{(n)}(u),
\widetilde{S}_{t}^{(n)}(x),\widetilde{S}_{t}^{(n) -1}(y),M^{(n)}_{u})
\end{displaymath}
is tight, and therefore has a subsequential limit in law
\begin{equation}
\label{EqLimitLaw}
(X^{\ast}_{t}, L^{\ast}_{t}(x),
u^{\ast}(t), t^{\ast}(u),
\widetilde{S}_{t}^{\ast}(x),\widetilde{S}_{t}^{\ast -1}(y),B_{u}),
\end{equation}
where $(B_{u})_{u\geq 0}$ is a standard Brownian motion
started from $0$. Define
\begin{displaymath}
S^{\ast}_{t}(x)=\widetilde{S}_{t}^{\ast}(x)+B_{u^{\ast}(t)},
\end{displaymath}
and
\begin{displaymath}
\Psi^{\ast}_{u}(y)=S^{\ast}_{t^{\ast}(u)}\circ S^{\ast -1}_{0}(y)=
\widetilde{S}^{\ast}_{t^{\ast}(u)}\circ S^{ -1}_{0}(y) + B_{u},
\end{displaymath}
where $S_{0}$ is given by \eqref{EqLimScale}. $\Psi^{\ast}_{u}(y)$ is the limit (along the subsequence we consider) of
\begin{displaymath}
\Psi^{(n)}_{u}(y)=S^{(n)}_{t^{(n)}(u)}\circ S^{(n) -1}_{0}(y).
\end{displaymath}

We want to show that $(\Psi^{\ast}_{u})_{u\geq 0}$ is the Bass-Burdzy flow associated to $(B_{u})_{u\geq 0}$. We have, for $u<u^{(n)}_{\rm max}$ and
$y\in (S^{(n)}_{0}(-\infty),S^{(n)}_{0}(+\infty))$, that
\begin{multline*}
\dfrac{\partial}{\partial u}\Psi^{(n)}_{u}(y)=\\
\left\lbrace
\begin{array}{ll}
0 & \text{if}~y=M^{(n)}_{u}, \\ 
-\dfrac{2L^{(n)}_{t^{(n)}(u)}(X^{(n)}_{t^{(n)}(u)}-2^{-n})}
{L^{(n)}_{t^{(n)}(u)}(X^{(n)}_{t^{(n)}(u)}-2^{-n})
+L^{(n)}_{t^{(n)}(u)}(X^{(n)}_{t^{(n)}(u)}+2^{-n})} & 
\text{if}~(\Psi^{(n)}_{u})^{-1}(y)\geq
X^{(n)}_{t^{(n)}(u)}+2^{-n},\\ 
+\dfrac{2L^{(n)}_{t^{(n)}(u)}(X^{(n)}_{t^{(n)}(u)}+2^{-n})}
{L^{(n)}_{t^{(n)}(u)}(X^{(n)}_{t^{(n)}(u)}-2^{-n})
+L^{(n)}_{t^{(n)}(u)}(X^{(n)}_{t^{(n)}(u)}+2^{-n})} & 
\text{if}~(\Psi^{(n)}_{u})^{-1}(y)\leq
X^{(n)}_{t^{(n)}(u)}-2^{-n},
\end{array} 
\right.
\end{multline*}
and in all other cases, 
\begin{multline}
\label{EqBound}
-\dfrac{2L^{(n)}_{t^{(n)}(u)}(X^{(n)}_{t^{(n)}(u)}-2^{-n})}
{L^{(n)}_{t^{(n)}(u)}(X^{(n)}_{t^{(n)}(u)}-2^{-n})
+L^{(n)}_{t^{(n)}(u)}(X^{(n)}_{t^{(n)}(u)}+2^{-n})}
\leq\dfrac{\partial}{\partial u}\Psi^{(n)}_{u}(y)\\\leq
\dfrac{2L^{(n)}_{t^{(n)}(u)}(X^{(n)}_{t^{(n)}(u)}+2^{-n})}
{L^{(n)}_{t^{(n)}(u)}(X^{(n)}_{t^{(n)}(u)}-2^{-n})
+L^{(n)}_{t^{(n)}(u)}(X^{(n)}_{t^{(n)}(u)}+2^{-n})}.
\end{multline}
Since $L^{(n)}_{t^{(n)}(u)}$ converges, for $y$ away from $B_{u}$, 
\begin{displaymath}
\dfrac{\partial}{\partial u}\Psi^{\ast}_{u}(y)=
-\1_{y>B_{u}}+\1_{y<B_{u}}.
\end{displaymath}
\eqref{EqBound} and the convergence of local times implies that
$u\mapsto\Psi^{\ast}_{u}(y)$ is Lipschitz-continuous.
Thus, according to Theorem 2.3 in \cite{BassBurdzy99StochBiff},
$(\Psi^{\ast}_{u})_{u\geq 0}$ is the Bass-Burdzy flow associated to
$(B_{u})_{u\geq 0}$. 

Let
\begin{displaymath}
\xi^{\ast}_{u}=(\Psi^{\ast}_{u})^{-1}(B_{u}).
\end{displaymath}
We have that
\begin{displaymath}
X^{\ast}_{t}=
\widetilde{S}_{t}^{\ast -1}(0)=
S_{0}^{-1}(\xi^{\ast}_{u^{\ast}(t)}).
\end{displaymath}
Thus, $X^{\ast}_{t}$ follows the definition of a Linearly Reinforced Motion with driving Brownian motion
$(B_{u})_{u\geq 0}$ (Definition \ref{DefLSRM}). It follows that the limit law for \eqref{EqLimitLaw} is unique and we have the desired identity in law.
\end{proof}

\begin{prop}
\label{PropNormOccupMeas}

Let be a Linearly Reinforced Motion
$((X_{t})_{t\geq 0},
(L_{t}(x))_{x\in \R, t\geq 0})$ started from $0$, with initial occupation profile $L_{0}$. It is coupled with the random environment
$(\U(x))_{x\geq 0}$ (see \eqref{EqU}).
For any $x_{1},x_{2}\in \R$, 
\begin{displaymath}
\lim_{t\to +\infty}\dfrac{L_{t}(x_{2})}{L_{t}(x_{1})}=
\dfrac{L_{0}(x_{2})e^{-\U(x_{2})}}
{L_{0}(x_{1})e^{-\U(x_{1})}}~~\text{a.s.}
\end{displaymath}
Moreover, the convergence is a.s. uniform on compact subsets of
$\R^{2}$.
In particular, the random environment
$(\U(x))_{x\geq 0}$ is measurable with respect to
$(X_{t})_{t\geq 0}$.
\end{prop}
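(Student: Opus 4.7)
The plan is to use the mixture-of-diffusions representation of Theorem \ref{ThmMixture}, which lets me take
\[
L_t(x)^2 = L_0(x)^2 + 2\lambda_{q(t)}(x),
\]
with $(\lambda_q)$ the local time field of the diffusion $(Z_q)$ in the random potential $2\U - 2\log L_0$, and $t \mapsto q(t)$ an increasing bijection of $[0,+\infty)$ (Lemma \ref{LemTimeChangeLSRM}; $Z_q$ is defined for all $q$, and $X_t$ for all $t$ by Proposition \ref{PropLSRM}(1)). Once I show that $\lambda_{q(t)}(x) \to +\infty$ as $t \to +\infty$, the ratio $L_t(x_2)/L_t(x_1)$ is asymptotically equivalent to $\sqrt{\lambda_{q(t)}(x_2)/\lambda_{q(t)}(x_1)}$, so the proposition reduces to analysing the ratio of local times of $(Z_q)$.

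The natural way to do this is through the Brownian embedding used in the proof of Proposition \ref{PropConvZq}: with a standard Brownian motion $\beta$ independent of $\U$, its local times $(\ell^\beta_s(\varsigma))$, and the time change $s(q)$ of \eqref{Eqsq}, one has the identity
\[
\lambda_q(x) = L_0(x)^2 e^{-2\U(x)} \ell^\beta_{s(q)}(\mathcal{S}(x)).
\]
Since $\mathcal{S}$ is a.s.\ a bijection $\R \to \R$ and $\beta$ is recurrent, $s(q(t)) \to +\infty$ and $\ell^\beta_{s(q(t))}(\varsigma) \to +\infty$ for every $\varsigma \in \R$. Dividing the above identity at $x=x_2$ by the one at $x=x_1$, the proposition is reduced to the a.s.\ convergence
\[
\frac{\ell^\beta_s(\varsigma_2)}{\ell^\beta_s(\varsigma_1)} \longrightarrow 1 \quad \text{as } s \to +\infty,
\]
uniformly for $(\varsigma_1,\varsigma_2)$ in compact subsets of $\R^2$.

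This last statement is the main obstacle, but is a classical ergodic property of Brownian local times. I would establish it using the second Ray-Knight theorem: parametrising by the inverse local time $T_a$ at $0$, the process $\varsigma \mapsto \ell^\beta_{T_a}(\varsigma)$ is a squared Bessel diffusion of dimension $2$ started at $a$, so the strong law of large numbers for BESQ yields $\ell^\beta_{T_a}(\varsigma)/a \to 1$ a.s.\ for each fixed $\varsigma$, with fluctuations of order $\sqrt{a}$ (up to logarithmic factors) uniformly on any bounded $\varsigma$-interval. This upgrades pointwise convergence to uniform convergence on compact sets of $\varsigma$, and together with continuity of $L_0$, $\U$ and $\mathcal{S}$ and passage to the square root, gives the stated uniform convergence of $L_t(x_2)/L_t(x_1)$ on compact subsets of $\R^2$.

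For the measurability claim I would specialise to $x_1 = 0$: since $\U(0) = \sqrt{2}\,W(0) = 0$ by \eqref{EqU}, the limit becomes $L_0(x) e^{-\U(x)}/L_0(0)$. By Proposition \ref{PropLSRM}(3), $L_t - L_0$ is the occupation density of $(X_s)_{0 \le s \le t}$, so $L_t(x)/L_t(0)$ is a measurable functional of $(X_s)_{s \ge 0}$, and hence so is its a.s.\ limit $L_0(x)e^{-\U(x)}/L_0(0)$. Since $L_0$ is a deterministic function, $\U(x)$ itself is measurable with respect to the $\sigma$-algebra generated by $(X_t)_{t \ge 0}$.
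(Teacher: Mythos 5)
Your proposal is correct in outline but follows a genuinely different route from the paper. The paper's proof observes that $L_0(x)^2 e^{-2\U(x)}\,dx$ is a finite invariant measure for $(Z_q)$ and then directly invokes the ergodic theorem for one-dimensional diffusions (It\^o--McKean, Section 6.8) to obtain
\[
\lim_{q\to+\infty}\frac{1}{q}\lambda_q(x) = c_1 L_0(x)^2 e^{-2\U(x)}\quad\text{a.s.},
\qquad
c_1^{-1}=\int_\R L_0(r)^2 e^{-2\U(r)}\,dr,
\]
together with its uniform-on-compacts strengthening from the reference \cite{VanZanten2003Erg}. This gives the full asymptotic $L_t(x)\sim\sqrt{2c_1}\,q(t)^{1/2}L_0(x)e^{-\U(x)}$, which is also reused later in the proof of Proposition \ref{PropAsymp}. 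You instead unwind the Brownian embedding identity $\lambda_q(x)=L_0(x)^2 e^{-2\U(x)}\ell^\beta_{s(q)}(\mathcal{S}(x))$ from the proof of Proposition \ref{PropConvZq} and reduce everything to the classical fact that $\ell^\beta_s(\varsigma_2)/\ell^\beta_s(\varsigma_1)\to 1$ a.s., uniformly on compacts. That is a more elementary and self-contained strategy, at the price of having to re-derive a Brownian local-time ergodic statement by hand rather than quoting a diffusion ergodic theorem; it also yields only the ratio, not the sharp $q^{1/2}$ asymptotic (which is harmless for this proposition but is needed elsewhere). Your derivation of the measurability claim via $x_1=0$, using $\U(0)=0$ and Proposition \ref{PropLSRM}(3), is clean and matches the spirit of the paper.

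One concrete error to fix: in the second Ray--Knight theorem, $\varsigma\mapsto \ell^\beta_{\tau_a}(\varsigma)$ (for $\varsigma\geq 0$, with $\tau_a$ the inverse local time at $0$) is a squared Bessel process of \emph{dimension $0$} started at $a$, not of dimension $2$. Dimension $2$ would give a process that never hits zero, whereas here $\ell^\beta_{\tau_a}(\cdot)$ is eventually $0$ since $\beta$ visits only a bounded range up to time $\tau_a$; BESQ${}^0$ is precisely the absorbing case. The conclusion you want still holds: for fixed $\varsigma$, the process $a\mapsto \ell^\beta_{\tau_a}(\varsigma)$ is a subordinator with $\E[\ell^\beta_{\tau_a}(\varsigma)]=a$, so $\ell^\beta_{\tau_a}(\varsigma)/a\to 1$ a.s.\ by the SLLN for subordinators (rather than a ``SLLN for BESQ'', which is not standard terminology). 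Upgrading this to uniform convergence on $\varsigma$-compacts still requires a short argument (monotonicity in $a$ plus continuity and pointwise convergence on a dense set, or a direct BESQ${}^0$ scaling estimate); as written, the step from ``fluctuations of order $\sqrt{a}$'' to uniform convergence is asserted rather than proved, and is the one place your sketch would need to be fleshed out.
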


\begin{proof}
The measure
$L_{0}(x)^{2}e^{-2\U(x)} dx$ is finite and invariant for
$(Z_{q})_{q\geq 0}$. According the ergodic theorem for one-dimensional diffusions (Section 6.8 in \cite{ItoMcKean1974Diffusions}),
\begin{displaymath}
\lim_{q\to +\infty}\dfrac{1}{q}\lambda_{q}(x)= 
c_{1}L_{0}(x)^{2}e^{-2\U(x)}~~\text{a.s.},
\end{displaymath}
where
\begin{equation}
\label{EqNormInt}
c_{1}^{-1}=\int_{\R}L_{0}(r)^{2}e^{-2\U(r)} dr
=\int_{\R}e^{-2\sqrt{2}W(y)-2\vert y\vert} dy.
\end{equation}
For the uniform convergence, see \cite{VanZanten2003Erg}.
Then,
\begin{displaymath}
L_{t}(x)=
(L_{0}(x)^{2}+2\lambda_{q(t)}(x))^{\frac{1}{2}}
\sim\sqrt{2c_{1}}q(t)^{\frac{1}{2}}
L_{0}(x)e^{-\U(x)}.
\qedhere
\end{displaymath}
\end{proof}

\begin{rem}
The measure $L_{0}(x)e^{-\U(x)}$ is not necessarily finite. We have that
\begin{displaymath}
\int_{\R}L_{0}(x)e^{-\U(x)} dx
=\int_{\R}\Big(\dfrac{d}{dy}S_{0}^{-1}(y)
\Big)^{-\frac{1}{2}}e^{-\sqrt{2}W(y)-\vert y\vert} dy,
\end{displaymath}
where $S_{0}^{-1}$ can be any increasing diffeomorphism from 
$\R$ to $\R$. The integral above being finite is a $0$-$1$ property, but there are examples where it is infinite. For that it is sufficient that
\begin{displaymath}
\int_{\R}\Big(\dfrac{d}{dy}S_{0}^{-1}(y)
\Big)^{-\frac{1}{2}}e^{-(1+\varepsilon)\vert y\vert} dy = +\infty.
\end{displaymath}
In the case when it is finite, the normalized occupation measure $\frac{1}{t}(L_{t}(x)-L_{0}(x)) dx$ converges a.s., in the weak topology of measures, to
\begin{displaymath}
c_{2} L_{0}(x)e^{-\U(x)} dx,
\end{displaymath}
where $c_{2}$ is a normalization factor.
\end{rem}

Next we give the large time behaviour of $(X_{t})_{t\geq 0}$. Actually, the leading order is given by the deterministic drift part in the random potential $2\U - 2\log(L_{0})$.

\begin{prop}
\label{PropAsymp}
Consider a Linearly Reinforced Motion
$((X_{t})_{t\geq 0},
(L_{t}(x))_{x\in \R, t\geq 0})$ started from $0$, with the initial occupation profile $L_{0}$ being equal to $1$ everywhere, except possibly a compact interval. Then,
\begin{displaymath}
\limsup_{t\to +\infty}\dfrac{X_{t}}{\log(t)}
=\dfrac{1}{3}~~\text{a.s.},
\qquad
\liminf_{t\to +\infty}\dfrac{X_{t}}{\log(t)}=-
\dfrac{1}{3}~~\text{a.s.}
\end{displaymath}
The mixture of diffusions $(Z_{q})_{q\geq 0}$ such that
$X_{t}=Z_{q(t)}$, with
\begin{displaymath}
dt = (L_{0}(Z_{q})^{2}+2\lambda_{q}(Z_{q}))^{-\frac{1}{2}}dq,
\end{displaymath}
satisfies
\begin{displaymath}
\limsup_{q\to +\infty}\dfrac{Z_{q}}{\log(q)}
=\dfrac{1}{6}~~\text{a.s.},
\qquad
\liminf_{q\to +\infty}\dfrac{Z_{q}}{\log(q)}=-
\dfrac{1}{6}~~\text{a.s.}
\end{displaymath}
\end{prop}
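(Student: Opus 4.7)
The plan is to reduce the question to the mixture diffusion $Z_q$ via its Brownian embedding, pin down the a.s.\ behaviour of $Z_q$ by LIL-type arguments, and transfer back to $X_t$ via the explicit time change of Theorem \ref{ThmMixture}. Since $L_0$ agrees with $1$ outside a compact set, $S_0$, $\U$ and the natural scale $\mathcal{S}$ of \eqref{EqNatScal} differ from their ``$L_0\equiv 1$'' counterparts only by bounded quantities; these corrections vanish under logarithmic scaling, so I would reduce without loss of generality to the case $L_0\equiv 1$. Then the diffusion potential is $V(x)=2\sqrt 2\,W(x)+2|x|$ and $\mathcal{S}(x)=\int_0^x e^{V(r)}\,dr$.

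The first core step would be a precise a.s.\ asymptotic for $\mathcal{S}$. Using a time-reversal of $W$ around the running upper limit, the quantity $\mathcal{S}(x)e^{-V(x)}=\int_0^x e^{-(V(x)-V(y))}\,dy$ can be identified at large $x$ with an exponential functional of Brownian motion with drift of Dufresne type; combined with the strong law $V(x)/|x|\to 2$ this determines $\log\mathcal{S}(x)$ almost surely at large $|x|$ and hence inverts $\mathcal{S}^{-1}$ at the logarithmic order. The second core step is the Brownian embedding $Z_q=\mathcal{S}^{-1}(\beta_{s(q)})$ from the proof of Proposition \ref{PropConvZq}, with $\beta$ a standard Brownian motion independent of $\U$. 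Since $x\mapsto L_0(x)^2 e^{-2\U(x)}$ is a.s.\ integrable with total mass $c_1^{-1}$ and exponentially concentrated near $0$, and since $\ell^{\beta}_s(\varsigma)$ is spatially H\"older-continuous, one gets an a.s.\ asymptotic of $q(s)$ in terms of $\ell^{\beta}_s(0)$ as $s\to\infty$. Combining with the LIL for $\ell^{\beta}_s(0)$ and for $\max_{r\le s}\beta_r$ should then deliver the $\limsup$, and by the $x\mapsto -x$ symmetry (the two pieces $(W(y))_{y\ge 0}$ and $(W(-y))_{y\ge 0}$ are i.i.d.) the $\liminf$, of $Z_q/\log q$.

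To pass from $Z_q$ to $X_t=Z_{q(t)}$, I would use the time change $dt=(L_0^2+2\lambda_q)^{-1/2}\,dq$ together with the occupation-density asymptotic $L_t(x)\sim\sqrt{2c_1 q(t)}\,L_0(x)e^{-\U(x)}$ from Proposition \ref{PropNormOccupMeas}. An ergodic argument applied to
\[
t(q)=\int_0^q(L_0(Z_r)^2+2\lambda_r(Z_r))^{-1/2}\,dr,
\]
legitimate because the extreme excursions of $Z$ contribute negligibly since $Z$ is positively recurrent with invariant density $c_1 L_0^2 e^{-2\U}$, gives $t(q)\sim C\sqrt{q}$ a.s.\ for some $\omega$-measurable positive constant $C$. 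Hence $\log q\sim 2\log t$ a.s., and the $X_t$ statement follows by doubling the $Z_q$ one, turning $\tfrac{1}{6}$ into $\tfrac{1}{3}$.

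The hard part will be pinning down the exact constant $\tfrac{1}{6}$ at the a.s.\ level. The Wiener fluctuations in $V$ enter at the $\sqrt{|x|\log\log|x|}$ scale, and one must show they do not modify the leading logarithmic constant obtained from the combination of $\mathcal{S}^{-1}$, the time change $s\leftrightarrow q$, and the LIL for the Brownian maximum; making each of these ingredients uniform enough to yield almost-sure (rather than merely in-probability) statements is the delicate technical point, and the one where quenched versus annealed behaviour of the exponential functionals entering $\mathcal{S}$ needs to be carefully disentangled.
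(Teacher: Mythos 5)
Your route—natural scale $\mathcal{S}$, Brownian embedding $Z_q=\mathcal{S}^{-1}(\beta_{s(q)})$ from Proposition \ref{PropConvZq}, van Zanten ergodicity for $\lambda_q$ to get $\log t\sim\tfrac12\log q$, LIL for $\beta$ and for $\ell^\beta_\cdot(0)$—is the same route as the paper's, and the reduction to $L_0\equiv 1$ is legitimate. The one ingredient you treat differently from the paper is the $s\leftrightarrow q$ time change, and that is exactly where the constant is decided; yet your sketch never runs the arithmetic and simply announces that you will ``turn $\tfrac16$ into $\tfrac13$.''

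If you do run it, your ingredients do not produce $\tfrac16$. From the definition $s(q)=\inf\{s:\int_\R L_0^2e^{-2\U}\ell^\beta_s(\mathcal{S}(x))\,dx\ge q\}$ in Proposition \ref{PropConvZq} (equivalently, $s(q)=\langle\mathcal{S}(Z)\rangle_q=\int_0^q\mathcal{S}'(Z_r)^2\,dr$ by Dambis--Dubins--Schwarz), one has $dq/ds=L_0^4e^{-4\U}(\mathcal{S}^{-1}(\beta_s))$, whose decay in the scale variable $\varsigma$ is $\asymp|\varsigma|^{-2}$: only polynomial, not exponential (the exponential concentration you quote is in the $x$-variable and is lost under pushforward by $\mathcal{S}$). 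This density is integrable, so your asymptotic $q(s)\sim c_3\,\ell^\beta_s(0)\asymp\sqrt{s}$ does hold, whence $\log q\sim\tfrac12\log s$. Combined with $\mathcal{S}^{-1}(\varsigma)\sim\tfrac12\log\varsigma$ and $\log\max_{r\le s}\beta_r=\tfrac12\log s+O(\log\log\log s)$ a.s., this yields $\limsup Z_q/\log q=\tfrac12$ and $\limsup X_t/\log t=1$, not the stated $\tfrac16$ and $\tfrac13$. The paper's own proof reaches $\tfrac16$ by instead writing $dq=L_0^{-2}e^{2\U}(\mathcal{S}^{-1}(\beta_s))\,ds$, a density growing like $|\varsigma|$, which forces $q(s)\asymp s^{3/2}$, $\log q\sim\tfrac32\log s$, and the ratio $\tfrac14/\tfrac32=\tfrac16$; but that expression for $dq/ds$ is inconsistent both with the $s(q)$ of Proposition \ref{PropConvZq} and with the DDS clock $\mathcal{S}'(Z_q)^2$. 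You need to reconcile this: either locate the error in your $q(s)\sim c_3\ell^\beta_s(0)$ or account for the missing factor of $3$ in the final constant. A useful sanity check is the drift-only toy model $\tfrac12\partial_x^2-\operatorname{sgn}(x)\partial_x$: there $\E_0[\tau_a]=e^{2a}-a-1$, a Borel--Cantelli argument along $q_n=2^n$ gives $\limsup Z_q/\log q=\tfrac12$, and the Wiener part of $\U$ contributes only $O(\sqrt{a\log\log a})$ to the potential, so should not change the leading constant. As it stands, your proposal quietly contradicts the conclusion it is supposed to prove.
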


\begin{proof}
The measure $L_{0}(x)^{2}e^{-2\U(x)} dx$ is a finite invariant measure for $(Z_{q})_{q\geq 0}$. 
According to \cite{VanZanten2003Erg},
\begin{displaymath}
\lim_{q\to +\infty}\sup_{x\in\R}\vert q^{-1}\lambda_{q}(x)-
c_{1}L_{0}(x)^{2}e^{-2\U(x)}\vert = 0,
\end{displaymath}
where $c_{1}$ is given by \eqref{EqNormInt}. Thus,
\begin{displaymath}
t=\int_{\R}
((L_{0}(x)^{2}+2\lambda_{q(t)}(x))^{\frac{1}{2}} -L_{0}(x))dx
\sim \sqrt{2 c_{1}}
\left(\int_{\R}L_{0}(x)e^{-\U(x)} dx\right)q(t)^{\frac{1}{2}},
\end{displaymath}
and
\begin{equation}
\label{Eqlogqt}
\log(t)\sim \dfrac{1}{2}\log(q(t)).
\end{equation}
So we are left to determine
\begin{displaymath}
\limsup_{q\to +\infty}\dfrac{Z_{q}}{\log(q)}~~\text{and}~~
\liminf_{q\to +\infty}\dfrac{Z_{q}}{\log(q)}.
\end{displaymath}

Consider the natural scale function $\mathcal{S}$ of
$(Z_{q})_{q\geq 0}$, given by \eqref{EqNatScal}. We have that
\begin{equation}
\label{EqS1}
\mathcal{S}^{-1}(\varsigma)
\stackrel{+\infty}{\sim}\dfrac{1}{2}\log (\varsigma),
\qquad
\mathcal{S}^{-1}(\varsigma)
\stackrel{-\infty}{\sim}-\dfrac{1}{2}\log (\vert\varsigma\vert).
\end{equation}
$(\mathcal{S}(Z_{q}))_{q\geq 0}$ is a Brownian motion 
$(\beta_{s})_{s\geq 0}$ time-changed, with the time-change given by
\begin{displaymath}
ds=L_{0}(Z_{q})^{2}e^{-2\U(Z_{q})} dq,
\end{displaymath}
and the inverse time change
\begin{displaymath}
dq=L_{0}(\mathcal{S}^{-1}(\beta_{s}))^{-2}
e^{2\U(\mathcal{S}^{-1}(\beta_{s}))} ds.
\end{displaymath}
We have that for any $\alpha>1$, a.s. there is $K_{\alpha}>1$, such that
\begin{displaymath}
\forall \varsigma\in\R,~
K_{\alpha}^{-1}\vert \varsigma\vert^{\alpha^{-1}}\leq
L_{0}(\mathcal{S}^{-1}(\varsigma))^{-2}
e^{2\U(\mathcal{S}^{-1}(\varsigma))}\leq
K_{\alpha}\vert \varsigma\vert^{\alpha}.
\end{displaymath}
Then, using the Brownian scaling, we get that for $\alpha>1$ and some random $\widetilde{K}_{\alpha}>1$,
\begin{equation}
\label{EqErgqs}
\widetilde{K}_{\alpha}^{-1}s^{1+\frac{\alpha^{-1}}{2}}
\leq q(s)\leq 
\widetilde{K}_{\alpha}s^{1+\frac{\alpha}{2}},
\end{equation}
According the law of iterated logarithm,
\begin{displaymath}
\limsup_{s\to +\infty}\dfrac{\mathcal{S}(Z_{q(s)})}
{(2s\log\log(s))^{\frac{1}{2}}}=1,
\qquad
\liminf_{s\to +\infty}\dfrac{\mathcal{S}(Z_{q(s)})}
{(2s\log\log(s))^{\frac{1}{2}}}=-1.
\end{displaymath}
Combining with\eqref{EqS1} and \eqref{EqErgqs}, we get that
\begin{displaymath}
\limsup_{q\to +\infty}\dfrac{\log(Z_{q})}{\log(q)}=\dfrac{1}{6},
\qquad
\liminf_{q\to +\infty}\dfrac{\log(Z_{q})}{\log(q)}=-\dfrac{1}{6}.
\end{displaymath}
Combining with \eqref{Eqlogqt}, we get the result.
\end{proof}

\section*{Acknowledgements}

This work was supported by the French National Research Agency (ANR) grant
within the project MALIN (ANR-16-CE93-0003).

This work was partly supported by the LABEX MILYON (ANR-10-LABX-0070) of Université de Lyon, within the program "Investissements d'Avenir" (ANR-11-IDEX-0007) operated by the French National Research Agency (ANR).

TL acknowledges the support of Dr. Max Rössler, the Walter Haefner
Foundation and the ETH Zurich Foundation.

PT acknowledges the support of the National Science Foundation of China (NSFC), grant No. 11771293.

\bibliographystyle{alpha}
\bibliography{titusbibnew}

\end{document}